\documentclass[11pt,leqno]{article}

 \usepackage[all]{xy}
\usepackage{amssymb,amsmath,amsthm,url,rotating}
 \usepackage{graphicx,epsfig}
 \usepackage{xcolor,graphicx}
  \usepackage{hyperref}
\topmargin -.5in
\textheight 9in
\textwidth 6.5in
\oddsidemargin 0.0in
\evensidemargin 0.0in
\newcommand{\n}{\noindent}

\newcommand{\vp}{\varepsilon}
\newcommand{\bb}[1]{\mathbb{#1}}
\newcommand{\cl}[1]{\mathcal{#1}}

\newcommand{\ovl}{\overline}

\theoremstyle{plain}
\newtheorem{thm}{Theorem}[section]
\newtheorem{lem}[thm]{Lemma}

\newtheorem{pro}[thm]{Proposition}

\newtheorem{cor}[thm]{Corollary}

\theoremstyle{definition}

\newtheorem{dfn}[thm]{Definition}

\theoremstyle{remark}
\newtheorem{rem}[thm]{Remark}

\numberwithin{equation}{section}

\setcounter{secnumdepth}{1}
\def\tilde{\widetilde}

\renewcommand{\tilde}{\widetilde}

\def\R{\bb R}

\def\C{\bb C}

\def\F{\bb F}

\def\T{\bb T}

\def\d{\delta}

\def\N{\bb N}

\def\F{\bb F}

\def\T{\bb T}
\def\nl{\nolimits}
\def\d{\delta}

\def\tilde{\widetilde}

\renewcommand{\tilde}{\widetilde}

\def\R{\bb R}
\def\Z{\bb Z}

\def\C{\bb C}

\def\N{\bb N}

\def\T{\bb T}
\def\z{z}
\def\hat{\widehat}
\def\nl{\nolimits}
\begin{document}
\def\d{\delta}

  \def\tr{{\rm tr}}
 \def\y{\varphi}
\title{
Completely Sidon  sets in  discrete groups}

\author{by\\
 Gilles  Pisier\\
Texas A\&M University and   Sorbonne Universit\'e (IMJ)}

 \maketitle
 \begin{abstract}  
 A subset of a discrete group $G$
  is called
 completely Sidon if its span in $C^*(G)$ is completely isomorphic
 to the operator space version of the space $\ell_1$
(i.e. $\ell_1$ equipped with its maximal operator space structure).
We recently proved a generalization
to this context of Drury's classical union theorem for Sidon sets:
  completely Sidon sets are stable under finite unions.
 We give a different presentation of the proof
 emphasizing the ``interpolation property"
 analogous to the one Drury discovered.
 In addition we prove the analogue of the Fatou-Zygmund property:
 any bounded Hermitian function on a symmetric completely
 Sidon set $\Lambda\subset G\setminus\{1\}$ 
 extends to a positive definite function on $G$.
 In the final  section, we give a completely isomorphic characterization of the closed span
 $C_\Lambda$ of a completely  Sidon set in $C^*(G)$: the  dual (in the operator space sense) of  $C_\Lambda$
 is exact iff $\Lambda$ is  completely Sidon. In particular,
 $\Lambda$ is  completely Sidon as soon as $C_\Lambda$ is
 completely isomorphic (by an arbitrary isomorphism) to $\ell_1(\Lambda)$
 equipped with its maximal operator space structure.
 \end{abstract}  
 
 MSC Classif. 43A46, 46L06
 
 In harmonic analysis (see \cite{Kah}) a subset $\Lambda$ of an abelian
 discrete group  $G$ is called Sidon with constant $C$
 if for all finitely supported $a: \Lambda \to \C$
 we have $$\sum\nl_{n\in \Lambda} |a_n| \le C \|\sum\nl_{n\in \Lambda}  a_n \gamma_n\|_{C( \hat G)}$$
 where $\hat G$ is the dual (compact) abelian group, 
 and where $\gamma_n: \hat G \to \T$ is the character on $\hat G$
 associated to an element $n\in G$. Here 
 ${C( \hat G)}$ denotes the space of continuous functions on $\hat G$
 equipped with the usual sup-norm. For instance,
 when $G=\Z$ we may view $\hat G=\R/\Z$
 and $\gamma_n(t)=e^{2i\pi nt}$.\\
 Equivalently, if $C_\Lambda\subset {C( \hat G)}$ denotes the closed span
 of $\{\gamma_n\mid n\in \Lambda\}$ and
  $(e_n)$ denotes the canonical basis of $\ell_1(\Lambda)$
 the mapping $u: C_\Lambda \to \ell_1(\Lambda)$ defined
 by $u( \gamma_n)=e_n$  is an isomorphism
 with $\|u\|\le C$ (and trivially $\|u^{-1}\|\le 1$). 
 
 In the abelian case the subject has a long and rich history for which we refer to
 \cite{LR,MaP,Kah,GH}.
 The first period roughly 1960-1970
 was driven by a major open problem: whether the union
 of two Sidon sets is   a Sidon set. Eventually this was proved by Drury \cite{dru1}
 using a beautiful convolution device. After this achievement, it was only natural to investigate
 the non-abelian case. For that two options appear, either: \\
 1.  one
 replaces $\hat G$ by a compact non-abelian group
 and $\Lambda$ becomes a set of irreducible unitary representations
 on  the latter compact group, or:\\ 2.  one   replaces $G$ by a discrete
 non-abelian group.
 
 We will not deal with case 1; in that case
 the union problem  resisted generalization but was solved by Rider in 1975. 
 The subject suffered from the disappointing discovery that the duals of  most 
 compact Lie groups do not contain infinite Sidon sets.
 We refer the reader to our recent survey
 \cite{Prid}   for  more   on this.
 
 This paper is devoted to case 2. In this case, there were several
 attempts
to generalize the Sidon set theory notably by Picardello
and Bo\.zejko
 (see \cite{Pic,Boz}), but no analogue of Drury's union theorem was found.
 The novelty of our approach is that while these authors
 defined Sidon sets using the  Banach space structures of the
 relevant non-commutative operator algebras, we
 fully use their operator space structures. In particular,
 the Banach space $E=\ell_1(\Lambda)$ that enters  the definition of a Sidon set
 has to be considered as an operator space, given
 together with an isometric embedding $E\subset A$
 into a $C^*$-algebra $A$, or into $B(H)$ for some Hilbert space $H$.

 By definition an operator space is a subspace $E\subset A$
 (or $E\subset B(H)$).
 We may use a different $A$ and a different embedding as long
 as it induces  the same sequence of norms on all the
 spaces  $M_n(E)$  ($n\ge 1$). Of course $M_n(A)$ is equipped with
its unique $C^*$-norm, or equivalently the norm of the  $C^*$-tensor product
$M_n \otimes A$, and this induces a norm on the subspace $M_n(E)$.
The theory of operator spaces is now well developed. The main novelty
is that the bounded linear maps 
$u: E\to F$ between operator spaces
 are now replaced by
the completely bounded (in short c.b.) ones 
and the  norm $\|u\|$ is replaced by the cb-norm $\|u\|_{cb}$.
We say that $u$ is a complete isomorphism if it is
invertible and both $u$ and $u^{-1}$ are c.b. maps.
See below for background on this. We refer   to the books \cite{ER,P4}
for more information.

In the case of $\ell_1$, there is a privileged operator space structure
$\ell_1 \subset A$
 that can be conveniently described 
 using for $A$ the $C^*$-algebra $C^*(\F_\infty)$
 of the free group with countably infinitely many generators.
 Let $(U_n)$ denote the unitaries in $A$ corresponding to the free generators.
 The embedding $j: \ell_1 \subset A$
 is defined by $j(e_n)=U_n$, where $(e_n)$ is the canonical basis of  $\ell_1 $.
 Similarly, given an arbitrary set $\Lambda$ we may consider
 the group $\F_\Lambda$ freely generated by $(g_n)_{n\in \Lambda}$
 and the corresponding unitaries $(U_n)_{n\in \Lambda}$ in
 $A=C^*(\F_\Lambda)$. We then define
 $j: \ell_1(\Lambda) \subset A$ again by $j(e_n)=U_n$ for $n\in  \Lambda$.
 Following Blecher and Paulsen (see \cite[\S 3]{P4} and \cite[p. 183]{P4}), 
 we call this the   maximal operator space structure on $\ell_1(\Lambda)$.
 Unless specified otherwise, we always assume
 $\ell_1(\Lambda)$ equipped with the latter. 
 More explicitly we have for 
 any $C^*$-algebra $B$ (e.g. $B=M_N$) and any finitely supported $a: \Lambda \to B$
  \begin{equation}\label{ed11}\| \sum\nl_{\Lambda} a_t \otimes U_t\|= \sup\{ \| \sum\nl_{\Lambda} a_t \otimes z_t\| \}\end{equation}
 where the sup runs over all $H$ and all functions $z: \Lambda \to B(H)$
 such that $\sup\nl_{\Lambda} \|z_t \|\le 1$.
 \begin{rem}\label{rd13}
  By the Russo-Dye theorem,
 the supremum is unchanged if we restrict to
 $z$'s with unitary values.
 Moreover, if we wish,  we may  (after translation by $z^{-1}_s$) restrict  to $z$'s with unitary values
 and such that $z_s=1$ for a single fixed
$s\in \Lambda$.  In addition we may restrict to
 finite dimensional $H$'s if we wish (see e.g. \cite[p. 155]{P4} for details). 
  \end{rem}
  In the case
  $B=\C$, we find
  \begin{equation}\label{ed12}\| \sum\nl_{\Lambda} a_t \otimes U_t\|=   \sum\nl_{\Lambda}| a_t| .\end{equation}
 
 We   now introduce the relevant generalization of Sidon sets. 
 
 Let $A \subset B(H)$ be a $C^*$-algebra.
   If $B\subset B(K)$ is any other $C^*$-algebra (for instance $B=M_N
   =B(K)$ when $\dim(K)=N$)
   and $x\in B \otimes A$ (algebraic tensor product)
   we denote 
   by $\| x\|_{B \otimes_{ \min}  A}$  or more
   simply by $\| x\|_{\min}$ the norm of $x$ in the 
   minimal or spatial tensor product, i.e.
   we set $$ \|x\|_{\min}=\|x : K\otimes_2 H \to K\otimes_2 H\|.$$
   Moreover, we use the same definition when $A,B$ are merely operator subspaces
   of $B(H),B(K)$. It is known that $ \|x\|_{\min}$ does not depend on the
   choice of the completely isometric embeddings 
$A \subset B(H)$  and $B\subset B(K)$.

  Let $G$ be  a discrete group. 
   Let 
   $U_G: G \to B(\cl H)$ be the universal representation and 
   let  $C^*(G)\subset B(\cl H)$ denote the $C^*$-algebra generated 
   by $U_G$. 
    
    Given a subset $\Lambda\subset G$ we denote by
    $C_\Lambda \subset C^*(G)$ the operator space  defined
   by $$C_\Lambda=\ovl{\rm span}[U_G(t)\mid t\in \Lambda ].$$
   \begin{dfn}\label{d1}
   We say that $\Lambda\subset G$ is completely Sidon if
 there is $C$ such that
 for any  $N\ge 1$ and any 
 finitely supported $a: \Lambda \to M_N$ 
 $$\|\sum\nl_{t\in \Lambda} a_t \otimes U_t \|_{M_N \otimes_{\min}  C^*(\F_\Lambda)}\le C \|\sum\nl_{t\in \Lambda} a_t \otimes U_G(t) \|_{M_N \otimes_{\min}  C^*(G)}.$$
 More explicitly, this is the same as requiring 
   \begin{equation}\label{ed13}\sup\|\sum a_t \otimes u_t\|_{\min}\le C \|\sum a_t \otimes U_G(t) \|_{\min},\end{equation}
  where the sup runs over all families $(u_t)_{t\in \Lambda}$ of unitaries on
 an arbitrary Hilbert space $H$.\\
   Equivalently, 
   the linear map $u: C_\Lambda  \to \ell_1(\Lambda)$ 
   defined for $t\in   \Lambda$ by $u(U_G(t)) = U_{\F_\Lambda}(g_t)$  is c.b.
   with $\|u\|_{cb}\le C$.
     Then, since $\|u^{-1}\|_{cb}\le 1$, the space $C_\Lambda $
 is   completely isomorphic to $\ell_1(\Lambda)$ equipped with its maximal operator space structure.
 \end{dfn}

 The fundamental example is given by free sets, as follows.
 \begin{pro}\label{pd13} Let $S\subset G$ be a free set,
 and let $\Lambda$ be a translate of $S \cup \{1\}$.
 Then $\Lambda$ is completely Sidon with $C=1$.
 Conversely, any completely Sidon set with $C=1$ is of this form.
 \end{pro}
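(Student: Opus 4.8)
The plan is to move between the free--group norm and the group norm through unitary representations, using \eqref{ed11} and Remark \ref{rd13} to rewrite the maximal operator space norm as a supremum over unitary tuples.

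\textbf{Direct implication.} Left multiplication of all the $U_G(t)$ by a fixed unitary $U_G(g)$ preserves every norm $\|\sum\nl_t a_t\otimes U_G(t)\|_{\min}$ and merely relabels the free generators of $\F_\Lambda$, so the completely Sidon constant is a translation invariant and I may take $\Lambda=S\cup\{1\}$ with $S$ free. By \eqref{ed11} and Remark \ref{rd13}, normalizing the unitary attached to the generator $g_1$ to be $1$, for every finitely supported $a:\Lambda\to M_N$ one has
\[
\Big\|\sum\nl_{t\in\Lambda} a_t\otimes U_{\F_\Lambda}(g_t)\Big\|_{\min}=\sup\Big\{\,\Big\|a_1\otimes 1+\sum\nl_{s\in S} a_s\otimes w_s\Big\|:\ (w_s)_{s\in S}\ \text{unitary}\Big\}.
\]
Given unitaries $(w_s)$, freeness of $S$ gives $\langle S\rangle\cong\F_S$, hence a representation $\pi$ of $\langle S\rangle$ with $\pi(s)=w_s$; the induced representation $\sigma=\mathrm{Ind}_{\langle S\rangle}^{G}\pi$ of $G$ contains $\pi$ upon restriction to $\langle S\rangle$, so compressing $a_1\otimes 1+\sum\nl_s a_s\otimes\sigma(s)$ to the subspace carrying $\pi$ returns $a_1\otimes 1+\sum\nl_s a_s\otimes w_s$. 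Thus each term in the supremum is at most $\|\sum\nl_t a_t\otimes U_G(t)\|_{\min}$, giving \eqref{ed13} with $C=1$; as the reverse inequality is automatic, $u$ is a complete isometry.

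\textbf{Converse, reduction.} Suppose $\Lambda$ is completely Sidon with $C=1$. Choosing $g\in\Lambda$ and replacing $\Lambda$ by $g^{-1}\Lambda$ (again permissible by translation invariance) I may assume $1\in\Lambda$ and set $S=\Lambda\setminus\{1\}$; it is enough to prove that $S$ is free, for then $\Lambda=g(S\cup\{1\})$ has the asserted form. As above, $C=1$ amounts to
\[
\sup_{(w_s)\ \text{unitary}}\Big\|a_1\otimes 1+\sum\nl_{s\in S} a_s\otimes w_s\Big\|=\sup_{\sigma\in\mathrm{Rep}(G)}\Big\|a_1\otimes 1+\sum\nl_{s\in S} a_s\otimes\sigma(s)\Big\|
\]
for all $N$ and all $a$. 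Restriction and induction identify the right-hand supremum with the one over $\mathrm{Rep}(\langle S\rangle)$, and reading each side as a full group $C^*$-norm this says exactly that the canonical quotient $*$-homomorphism $q:C^*(\F_S)\to C^*(\langle S\rangle)$, $U_{\F_S}(g_s)\mapsto U_G(s)$, is completely isometric on $E=\C\,1+\mathrm{span}[U_{\F_S}(g_s):s\in S]$.

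\textbf{Converse, main point.} It remains to show that $q$, whose kernel is generated by $N=\ker(\F_S\to\langle S\rangle)$, cannot be completely isometric on $E$ unless $N=\{1\}$, i.e. unless $S$ is free. I would argue by contradiction from a nontrivial reduced relation $s_1^{\vp_1}\cdots s_\ell^{\vp_\ell}=1$ in $\langle S\rangle$: this confines the tuples $(\sigma(s))_s$ to a proper closed subset of all unitary tuples, while the left-hand side ranges over every unitary tuple, so the displayed equality should fail for a suitable coefficient $a$. The delicate feature is that $E$ contains the $U_{\F_S}(g_s)$ but not their adjoints, so a mixed-exponent relation is invisible to any purely linear test; the separation must instead exploit how the operator norm of $a_1\otimes 1+\sum\nl_s a_s\otimes z_s$ couples $z_s$ with $z_s^{*}$, and the coefficient $a_1$ of the identity is essential, since dropping it reduces $\|a_1\otimes 1+\sum\nl_s a_s\otimes z_s\|^{2}$ to an expression in the pairwise products $z_s^{*}z_{s'}$ alone, which cannot detect a general relation. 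Concretely I would use that, by \eqref{ed11}, the maximal norm is computed by testing against all contractive tuples, so equality for every $a$ forces the relation-constrained representations to approximate arbitrary unitaries closely enough that the relation must be vacuous; producing the explicit separating $a$ that makes this quantitative is the crux, and the principal obstacle of the proof.
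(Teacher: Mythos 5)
Your direct implication is correct and is essentially the paper's argument: translation invariance, then (after normalizing the unitary at $1$ via Remark \ref{rd13}) any unitary tuple $(w_s)_{s\in S}$ is realized by a representation of $\langle S\rangle$ by freeness, and induction from $\langle S\rangle$ up to $G$ shows the $C^*(G)$-norm dominates the free norm. The only difference is cosmetic: the paper compresses this by saying ``we may assume $S$ generates $G$,'' while you justify that reduction explicitly with induced representations.

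The converse, however, contains a genuine gap, and you flag it yourself. You correctly reduce the statement to: if the canonical quotient $q: C^*(\F_S)\to C^*(\langle S\rangle)$ is completely isometric on $E=\C 1+{\rm span}[U_{\F_S}(g_s)\mid s\in S]$, then $S$ is free. But for this, the heart of the matter, you offer only a heuristic (``the relation-constrained tuples form a proper subset, so the equality should fail for a suitable $a$'') and you concede that producing the separating coefficient $a$ is ``the crux, and the principal obstacle of the proof.'' That step is the entire content of the converse and it is not supplied. Moreover, hunting for a separating matrix coefficient is arguably the wrong strategy, for the very reason you identify: $E$ is not self-adjoint, and no finite linear expression $a_1\otimes 1+\sum a_s\otimes z_s$ visibly encodes a mixed-exponent relation. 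The paper's resolution is not a cleverer coefficient but an operator-system rigidity result, namely \cite[Prop. 6]{Pki}: a \emph{unital} completely contractive map defined on the span of $1$ and a family of unitaries, which sends each unitary to a unitary, is automatically the restriction of a unital $*$-homomorphism on the generated $C^*$-algebra (such a map is completely positive, and a unitary whose image is unitary lies in the multiplicative domain). Applied to the map $1\mapsto 1$, $U_G(s)\mapsto U_{\F_S}(g_s)$ --- unital and completely contractive precisely because $C=1$ combined with Remark \ref{rd13} --- this yields a $*$-homomorphism $\pi$ with $\pi(U_G(s))=U_{\F_S}(g_s)$. Composing with the canonical $*$-homomorphism $\hat q$ taking $U_{\F_S}(g_s)$ to $U_G(s)$ gives $\pi\hat q={\rm id}$ on $C^*(\F_S)$; hence if $w\in\F_S$ maps to $1$ in $G$, then $U_{\F_S}(w)=\pi\hat q(U_{\F_S}(w))=1$, so $w=1$ and $S$ is free. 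This extension-to-$*$-homomorphism idea is exactly what your proposal is missing; without it (or an equivalent quantitative separation, which you do not construct) the converse is not proved.
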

 For the proof see Proposition \ref{p02} below.
 
 We can now state our main results:\\
 1. Completely Sidon sets are stable by finite unions.\\
 2. Assume $\Lambda  $ completely Sidon, symmetric, $1\not\in \Lambda  $  
 and assume for simplicity $\Lambda  $
  without any element of order 2 (this case can also be handled), then the linear map $u: C_\Lambda \to C^*(\F_\Lambda)$
 associated to the mapping $t\mapsto g_t$  extends to
 a completely positive (in short c.p.) map
 $\check u: C^*(G) \to C^*(\F_\Lambda)$.\\
 3. If the operator space $C_\Lambda$ is completely isomorphic
 to $\ell_1(\Lambda)$ via an arbitrary linear  correspondence,
 or if the dual operator space $C^*_\Lambda$
 is exact, then $\Lambda$ is completely Sidon.
 
 Point 1 is the non-abelian version of Drury's 1970 union theorem from \cite{dru1}.
 Point  2 is  analogous to  the so-called ``Fatou-Zygmund"
 property established by Drury in 1974 (see \cite{dru2,LR}), while
 point 3  is the analogue of the 1976 Varopoulos
theorem from \cite{V}. For emphasis, we should point out 
that a surprising dichotomy stems from it: for any infinite subset $\Lambda\subset G$
the space $C^*_\Lambda$ is (roughly)   either ``very big"
 or ``very small"  in the operator space sense.
 
 Points 1 and 2 answer questions raise by Bozejko
 in \cite{Boz} (see Remark \ref{boz}).
  The proof of 
 Point 2 is similar to that of 1, but is better understood
 if one first runs through the proof of 1 as we do below.
 Moreover, the quantitative estimates we give in terms of
 the constant $C$ may be of independent interest.
  Lastly 3 is new.
 \begin{rem}\label{r22}
 We should emphasize that the theory of completely Sidon sets
 does not contain the classical case, although it is very much parallel
 to it. Indeed,
  any group $G$ that contains
 an infinite completely Sidon set must be non-amenable
 (and hence extremely non-commutative)
 because $C^*(G)$ cannot be exact.
 More precisely, if the set has at least $n$ elements
 with completely Sidon constant $C < n/2\sqrt{n-1}$
 then $C^*(G)$ is not exact
  (see \cite[p. 336]{P4}) and a fortiori $G$ is not amenable.
 However, we do not know whether such a $G$ 
 must contain a copy of $\F_\infty$ (or equivalently  $\F_2$).
 \end{rem}
 {\bf Problem: } By our main result, any finite union of translates of free sets
 is completely Sidon. Is the converse true ?
 This fundamental question is analogous to a well known open one
 for the classical Sidon sets (see \cite[p. 107]{GH}).
 
 \section{Notation and background}

  Let $E\subset B(H)$ and
$F\subset B(K)$ be operator spaces, consider a
map $u: E \to F$.
For any $n\ge 1$, let
$M_n(E)  $ be the space of $n\times
n$ matrices with entries in $E$.
We have $M_n(E) \subset M_n(B(H))$.
We equip  
$M_n(E) \subset M_n(B(H))$
with the norm induced by
$B(\ell^n_2(H)) \approx M_n(B(H))$
where $\ell^n_2(H)$ means $ {H\oplus H\oplus \cdots \oplus H}$ (${n
\ {\rm times}} )$.
We define
$u_n\colon \ M_n(E)\longrightarrow
M_n(F)$ by setting $u_n([a_{ij}])=[u(a_{ij})]$. 
A map $u\colon\ E\to F$ is called completely bounded (in short c.b.) if
$\sup\nl_{n\ge 1} \|u_n\|_{M_n(E)\to M_n(F)}<\infty.$
Let
$$\|u\|_{cb} = \sup\nl_{n\ge 1} \|u_n\|_{M_n(E)\to M_n(F)}.$$
We  denote by
$CB(E,F)$
the Banach space of all  such maps  equipped with the
c.b.\ norm.
Let $M_n(E)_+=M_n(E) \cap M_n(B(H))_+$.
We say that $u$ is completely positive (c.p. in short)
if $u_n$ is positivity preserving, i.e. $u_n(M_n(E)_+) \subset M_n(F)_+ $ for any $n$.
When $E$ is an operator system (i.e. $E$ is a unital self-adjoint linear subspace)
c.p. implies c.b. and $\|u\|_{cb}=\|u\|=\|u(1)\|$. We denote by $CP(E,F)$
the set of c.p. maps.

Let  $A,B$ be $C^*$-algebras.
We will denote by $D(A,B)$ the set of all ``decomposable'' maps $u:A\to B$, i.e. the maps that are in the linear span of $CP(A,B)$.  This means that $u\in D(A,B)$ iff
there are $u_j \in CP(A,B)\quad(j=1,2,3,4)$ such that
$$u=u_1 -u_2 +i(u_3 -u_4 ).$$
 We will repeatedly use  the nice definition of
the dec-norm of a linear map  $u: A \to B$ between $C^*$-algebras given
by  Haagerup in \cite{Haa}, as follows.
    We set
 \begin{equation}\label{d11}\| u\|_{dec}=\inf\{\max\{\| S_1\|,\| S_2\|\}\}
 \end{equation}
  where the infimum runs over all maps $S_1 ,S_2\in CP(A,B)$ such that the map
 \begin{equation}\label{d12} V: x\to \left(
\begin{matrix}
 S_1 (x) & u(x)\\
 u(x^* )^* & S_2 (x)
\end{matrix}
\right) \end{equation}
is in $CP(A,M_2 (B))$.  
This is equivalent to the simple minded choice of norm
$\| u\|=\inf \sum\nl^{4}_{1}\| u_j \|$.
When $u$ is self-adjoint (i.e. when $u(x^*)=u(x)^*$
for all $x\in A$) we have $\| u\|_{dec}=\inf\|u_1+u_2\|$
 where the infimum runs over all the possible decompositions
of $u$ as $u=u_1-u_2$ with $u_1,u_2$ c.p..

  See \cite{Haa} for the proofs of all the basic facts on decomposable maps,
  that are freely used throughout this note.
  In particular, we repeatedly use the fact
  that for any pair 
 $v_j: A_j\to B_j$ ($j=1,2$)
 of decomposable maps 
 between
 $C^*$-algebras, 
 the map $v_1\otimes v_2$ on the algebraic tensor product uniquely
 extends to a map, still denoted by $v_1\otimes v_2$, in $D ( A_1\otimes_{\max} A_2  , B_1\otimes_{\max} B_2  )$ 
 with
  \begin{equation}\label{e8}
  \| v_1\otimes v_2:   A_1\otimes_{\max} A_2\to B_1\otimes_{\max} B_2\|_{dec}   
\le \|v_1\|_{dec}\|v_2\|_{dec}.
 \end{equation}
 Moreover, if $v_1,v_2$ are completely positive (c.p. in short)
 the resulting map $v_1\otimes v_2:   A_1\otimes_{\max} A_2\to B_1\otimes_{\max} B_2$
 is  c.p..
  Here  $A_1\otimes_{\max} A_2$  stands for 
  the $C^*$-algebra obtained by completing the algebraic tensor product $A_1\otimes A_2$ with respect to the maximal $C^*$-norm (see e.g. \cite[p. 227]{P4}).
  
  We also use  from \cite{Haa} that if $B=B(H)$ or if $B$ is an injective $C^*$-algebra
  (which means the identity of $B$ factors
 through $B(H)$ via c.p. maps) then for any $C^*$-algebra $A$
 we have
 $ CB(A,B)=D(A,B)$ and for any $u\in CB(A,B)$
  \begin{equation}\label{e1} 
  \|u\|_{cb}=\|u\|_{dec}.\end{equation}  
  See \cite{ER,P4} for more background and references.

 Let $\Lambda\subset G$ be a subset of a discrete group $G$.
 
 Let $U_G$ be the universal representation of $G$, and let $C^*(G)$
 be the $C^*$-algebra generated by $U_G$.
 
 Let $\lambda_G$ be the left regular representation, and let 
   $C_\lambda^*(G)$ 
 be the $C^*$-algebra generated by $\lambda_G$. We denote  by
  $M_G$ 
 the von Neumann algebra generated by $\lambda_G$.

 The notation $(\d_s)_{s\in G}$ is used mostly for the
 canonical basis of the group algebra $\C[G]$, and sometimes (abusively)
 for that of $\ell_2(G)$.
 As usual we view $\C[G]$ as a dense $*$-subalgebra of $C^*(G)$.

 \begin{pro}\label{pfz} Assume we have an embedding $C^*(\F_\Lambda) \subset B(\cl H)$.
 The following 
 properties are all equivalent refomulations
 of  Definition \ref{d1}:
   \item {(i)} The correspondence $ t\mapsto g_t$
   from $\Lambda$ to the free generators of $\F_\Lambda$
   extends to a c.b. linear map $u: C^*(G) \to B(\cl H)$
   with $\|u\|_{cb}\le C$.
   \item {(ii)} For any Hilbert space $H$, for any bounded mapping
   ${z}: \Lambda \to B(H)$ there is  a bounded linear map
   $u_{z}: C^*(G) \to B(  H)$ with 
   $\|u_{z}\|_{cb}\le C \sup\nl_{t\in \Lambda} \|{z}(t)\|$ such that
   $u_{z}( U_G(t))= {z}(t)$ for any $t\in \Lambda$.
     \end{pro}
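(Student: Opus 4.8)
The plan is to prove the two equivalences by the chains Definition~\ref{d1} $\Leftrightarrow$ (i) and (i) $\Leftrightarrow$ (ii), relying on only two soft tools: the maximality of the operator space structure on $\ell_1(\Lambda)$ as recorded in \eqref{ed11}, and the Arveson--Wittstock extension theorem, which asserts that a c.b.\ map from an operator subspace of a $C^*$-algebra into an injective target such as $B(\cl H)$ extends to the whole algebra with unchanged cb-norm (see \cite{P4}). Throughout I realize $\ell_1(\Lambda)=\ovl{\rm span}[U_{\F_\Lambda}(g_t)\mid t\in\Lambda]\subset C^*(\F_\Lambda)\subset B(\cl H)$ and abbreviate $e_t=U_{\F_\Lambda}(g_t)$. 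The one computation I would isolate first is the following consequence of \eqref{ed11}: for every Hilbert space $H$ and every bounded $z:\Lambda\to B(H)$, the map $w_z:\ell_1(\Lambda)\to B(H)$ sending $e_t\mapsto z_t$ has $\|w_z\|_{cb}=\sup_t\|z_t\|$. Indeed $\ge$ follows by evaluating on a single $e_t$, while $\le$ is exactly \eqref{ed11} after applying $\mathrm{id}_{M_N}\otimes w_z$ and letting $N$ vary; this is the concrete meaning of the fact that bounded maps out of $\ell_1(\Lambda)$ are automatically c.b.\ with the same norm.

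For Definition~\ref{d1} $\Leftrightarrow$ (i) I would use the reformulation built into Definition~\ref{d1}: $\Lambda$ is completely Sidon with constant $C$ exactly when $U_G(t)\mapsto e_t$ is c.b.\ from $C_\Lambda$ into $B(\cl H)$ with cb-norm $\le C$. Since $C_\Lambda$ is an operator subspace of the $C^*$-algebra $C^*(G)$ and the range lies in the injective algebra $B(\cl H)$, the extension theorem yields a c.b.\ map $u:C^*(G)\to B(\cl H)$ with $\|u\|_{cb}\le C$ and $u(U_G(t))=e_t$, which is precisely (i); conversely, restricting any such $u$ to $C_\Lambda$ returns the defining map and does not raise the cb-norm.

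For (i) $\Rightarrow$ (ii), given a bounded $z:\Lambda\to B(H)$ I would form $w_z$ as above (so $\|w_z\|_{cb}=\sup_t\|z_t\|$), extend it by the same theorem to $\tilde w_z:B(\cl H)\to B(H)$ of equal cb-norm, and set $u_z:=\tilde w_z\circ u$. Submultiplicativity of the cb-norm under composition gives $\|u_z\|_{cb}\le\|\tilde w_z\|_{cb}\|u\|_{cb}\le C\sup_t\|z_t\|$, while $u_z(U_G(t))=\tilde w_z(e_t)=z_t$, which is (ii). The converse (ii) $\Rightarrow$ (i) is then immediate: apply (ii) to the distinguished unitary-valued choice $z_t=e_t\in B(\cl H)$, for which $\sup_t\|z_t\|=1$; the resulting $u_z:C^*(G)\to B(\cl H)$ has $\|u_z\|_{cb}\le C$ and $u_z(U_G(t))=e_t$, i.e.\ it is the map of (i).

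I do not expect a genuine obstacle here, as the argument is a chaining of extensions and compositions of c.b.\ maps; the only points demanding care are that the Arveson--Wittstock extension be invoked in its norm-preserving (not merely bounded) form, so that the constant $C$ survives each step, and that the equality $\|w_z\|_{cb}=\sup_t\|z_t\|$ be drawn correctly from \eqref{ed11} rather than only an inequality in the wrong direction.
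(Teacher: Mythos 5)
Your proof is correct and follows essentially the same route as the paper's: both arguments rest on the injectivity of $B(\cl H)$ and $B(H)$ (Arveson--Wittstock) to pass between $C_\Lambda$ and $C^*(G)$, together with the fact that the free generators are completely Sidon with constant $1$ (your computation of $\|w_z\|_{cb}=\sup_t\|z_t\|$ from \eqref{ed11}), and conclude by composing the two extensions. The only difference is that you spell out explicitly the two points the paper leaves as ``obvious'' --- the equality $\|w_z\|_{cb}=\sup_t\|z_t\|$ and the implication (ii) $\Rightarrow$ (i) via the choice $z_t=U_{\F_\Lambda}(g_t)$ --- which is a faithful and complete rendering of the intended argument.
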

   \begin{proof} If $\Lambda$ is completely Sidon then clearly (i)
   holds by the injectivity of $B(\cl H)$, and conversely (i) obviously implies
   $\Lambda$   completely Sidon.\\
   By the injectivity of $B(H)$ for any ${z}$ as in (ii) there is a linear 
   $v_{z}: B(\cl H) \to B(H)$ extending the correspondence
   $U_{\F_\Lambda}(g_t) \to {z}(t)$ ($t\in \Lambda$) with
   $\|v_{z}\|_{cb} = \sup\nl_{t\in \Lambda} \|{z}(t)\|$ (this expresses the fact
   that $\{g_t\mid t\in \Lambda\}$ is completely Sidon with constant 1).
   Then the composition $u_{z}= v_{z} u$ shows that
   (i) implies (ii). The converse is obvious.
  \end{proof}
   \begin{rem}
  If $\Lambda$ is asymmetric in the sense that   $\Lambda \cap \Lambda^{-1}=\phi$,  
   we   show in Corollary \ref{da1}
  that the  correspondence $\Lambda \ni t\mapsto g_t\in \F_\Lambda$ 
 extends
  to a c.p. map $u: C^*(G) \to C^*(\F_\Lambda)$ but then
  we only obtain $\|u\|_{cb} (=\|u\|) \le O(C^4)$.
  \end{rem}

    \begin{rem}\label{boz} In \cite{Boz} Bo\.zejko
    considers the property appearing in (ii) in Proposition \ref{pfz} 
    and he calls ``w-operator Sidon" the sets with this property.
    He calls ``operator Sidon" the sets $\Lambda\subset G$ satisfying
    $\Lambda \cap \Lambda ^{-1}=\phi$ such that any
    $B(H)$-valued   bounded
    function on $\Lambda $ admits a positive definite extension
    on $G$, and proves that free sets
    (i.e. $\{g_t\mid t\in \Lambda\}$ in $\F_\Lambda$) have this property. ``Operator Sidon" is a priori
    stronger than  ``w-operator Sidon", but actually, we will show later 
    on in this paper (see Theorem \ref{tfz}) that the two properties are equivalent. Bo\.zejko also asked whether these sets
    are stable under union. We show this in Corollary \ref{druc}.
    Our results suggest to revise the terminology:
    perhaps the term ``operator Sidon"
    should be adopted instead of our ``completely Sidon".
  \end{rem}
  \begin{rem}\label{ko}  The following observation
  plays a crucial role in this paper.
  Let $\Gamma= \F_\Lambda$.
  Let $Q: C^*(\Gamma)\to M_\Gamma$ be the $*$-homomorphism
  associated to $\lambda_\Gamma$.
  Let $E\subset C^*(G)$ be an operator subspace.
  Then for  any $u\in CB( E, C^*(\Gamma))$  there is
  ${u^\dagger}\in D(C^*(G), M_\Gamma)$ with $\|{u^\dagger}\|_{dec}\le \|u\|_{cb}$
  such that ${u^\dagger}_{|E}= Qu$. Indeed,
 free groups  satisfy Kirchberg's factorization property from \cite{Kir}.
 In particular,  
by a well known construction 
  involving ultraproducts (see Th. 6.4.3 and Th. 6.2.7 in \cite{BO}), for some $H$  the map
  $Q$ factors through $B(H)$ via c.p. contractive maps
  $Q_1: B(H) \to M_\Gamma$ and $Q_2: C^*(\Gamma) \to B(H)$
  so that $Q=Q_1Q_2$. By the injectivity of $B(H)$
  the composition $Q_2 u: E \to B(H)$ admits an extension
  $\tilde{   Q_2 u}\in CB( C^*(G),B(H))$ with
  $\|  \tilde{   Q_2 u}\|_{cb} \le \|Q_2 u\|_{cb}\le \|u\|_{cb}$.
  But by \eqref{e1}  $CB( C^*(G),B(H))=D( C^*(G),B(H))$
  isometrically. Therefore $\|  \tilde{   Q_2 u}\|_{dec}\le \|u\|_{cb}$.
  The mapping ${u^\dagger}= Q_1 \tilde{   Q_2 u}$ has the announced properties.
  If we assume in addition that  $E$ is an operator system
  and that $u$ is c.p. then we find ${u^\dagger}\in CP(C^*(G), M_\Gamma)$ 
  with $\|{u^\dagger}\|=\|{u^\dagger}\|_{dec}\le \|u\|=\|u\|_{cb}$.
  
  In particular, if $\Lambda\subset G$ is a completely Sidon set with constant $C$, 
   let $E\subset C^*(G)$ be the span of $\{U_G(t)\mid t\in \Lambda\}$. We may apply the preceding observation  to the linear mapping 
  $u$ defined by $u(U_G(t)) = U_\Gamma(t)$ ($t\in \Lambda$).
  We find ${\cl U}\in D(C^*(G), M_\Gamma)$ such that 
  ${\cl U}(U_G(t)) = \lambda_\Gamma(g_t)$  for all $t\in \Lambda$ with $\|{\cl U}\|_{dec}\le \|u\|_{cb}\le C$.
  We will show below (see Corollary \ref{cc1}) that conversely the existence of such a ${\cl U}$ implies
  that $\Lambda\subset G$ is   completely Sidon.
 \end{rem}
 
  \begin{rem}\label{dye}  
   Let $A$ be a unital $C^*$-algebra. By
    \cite{KaPe}
      any $a\in A$ with $\|a\|<1-2/n$
 can be written as an average of $n$ unitaries in $A$.
\end{rem}
 
 \section{Operator valued harmonic analysis}
 Let $G$ be a discrete group. 
 Let 
 $\varphi: G \to A$ be a function with values in a $C^*$-algebra.
 Let $u_\varphi: \C[G] \to A$ be the linear map extending $\varphi$.
 We denote respectively by
 $$B(G,A), \quad CP(G,A),\quad CB(G,A), \quad D(G,A)$$
 the set of those $\varphi$ such that $u_\varphi$ extends to
 a   map $u_\varphi: C^*(G) \to A$ respectively in
  $$B(C^*(G),A), \quad CP(C^*(G),A), \quad CB(C^*(G),A), \quad D(C^*(G),A)$$
  and we set
 \begin{equation}\label{97}
  \| \varphi \|_{B(G,A)}= \|  u_\varphi\|  , \quad 
  \| \varphi \|_{CB(G,A)}= \|  u_\varphi\|_{cb} , \quad 
  \| \varphi \|_{D(G,A)}= \|  u_\varphi\|_{dec} .
   \end{equation}

 By \eqref{e1}, when
 $A=B(H)$ or when $A$ is injective  then $ {CB(G,A)}={D(G,A)}$
 and 
 $\| \varphi \|_{CB(G,A)}=\| \varphi \|_{D(G,A)}$, but in general we only have
 $ {D(G,A)}\subset {CB(G,A)} $
 with 
 $\| \varphi \|_{CB(G,A)}\le \| \varphi \|_{D(G,A)}$ and
 the inclusion is strict.
 
 When $A=\C$ we
 have $B(G,\C)=CB(G,\C)=D(G,\C)$ isometrically and we recover the  non-commutative analogue of the classical ``Fourier-Stieltjes
algebra"  $B(G)$ (see e.g. \cite{Ey}  or \cite[p. 3]{FiPi}), which can be identified isometrically with  $C^*(G)^*$: we have
$\varphi\in B(G)=B(G,\C)$ iff
there is a unitary representation $\pi: G \to B(H)$
and vectors $\xi,\eta\in H$ such that 
$\varphi(.) =\langle\eta, \pi(. )\xi\rangle$ and
$\|\varphi\|_{B(G)}=\inf\{\|\eta\| \|\xi\|\}$ where the infimum (actually the minimum is attained) runs over all
possible such representations
of $\varphi$.

By the factorization of c.b. maps (see e.g. \cite{ER,Pa2})
the case when $A=B(H)$ is entirely analogous:\\
in that case   $\varphi\in CB(G,B(H)) $ iff
there are $\hat H$, a unitary representation $\pi: G \to B(\hat H)$
and operators $\xi,\eta\in B(H,\hat H)$ such that 
$\varphi(.) = \eta^* \pi(. )\xi $ and
\begin{equation}\label{e2}
\|\varphi\|_{CB(G,A)}=\inf\{\|\eta\| \|\xi\|\}\end{equation}
 where the infimum (actually a minimum) runs over all
possible such representations
of $\varphi$.

With this notation we can immediately reformulate Proposition \ref{pfz}
like this:
\begin{pro}\label{pd1} A subset $\Lambda \subset G$ is a completely Sidon set with constant $C$
iff for any $H$ and any    $z: \Lambda\to B(H)$
such that $\sup\nl_\Lambda\|z\|\le 1$ there is
$\varphi \in CB(G,B(H))$ with $\| \varphi  \|_{ CB(G,B(H))}\le C$
such that $\varphi_{|\Lambda}=z$. Moreover, for the latter
to hold it suffices that it holds for any finite dimensional
$H$.
\end{pro}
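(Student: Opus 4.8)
The plan is to show Proposition \ref{pd1} is merely a restatement of the equivalent form of Definition \ref{d1} already recorded in Proposition \ref{pfz}(ii), combined with the reformulations in \eqref{ed13} and Remark \ref{rd13}. Recall from Definition \ref{d1} that $\Lambda$ is completely Sidon with constant $C$ precisely when \eqref{ed13} holds, i.e. $\sup\|\sum a_t\otimes u_t\|_{\min}\le C\|\sum a_t\otimes U_G(t)\|_{\min}$ for all $(u_t)_{t\in\Lambda}$ unitaries on an arbitrary $H$.

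First I would establish the forward direction. Assume $\Lambda$ is completely Sidon with constant $C$ and fix $z:\Lambda\to B(H)$ with $\sup\nl_\Lambda\|z\|\le 1$. By Proposition \ref{pfz}(ii), there is a bounded linear map $u_z:C^*(G)\to B(H)$ with $\|u_z\|_{cb}\le C\,\sup\nl_{t\in\Lambda}\|z(t)\|\le C$ satisfying $u_z(U_G(t))=z(t)$ for all $t\in\Lambda$. Setting $\varphi=u_z$ restricted along the map $t\mapsto U_G(t)$ (i.e. identifying $\varphi$ with the element of $CB(G,B(H))$ whose associated map is $u_z$), we obtain $\varphi\in CB(G,B(H))$ with $\|\varphi\|_{CB(G,B(H))}=\|u_z\|_{cb}\le C$ and $\varphi_{|\Lambda}=z$, which is exactly the asserted extension property. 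Here I use the notation \eqref{97} identifying $\|\varphi\|_{CB(G,B(H))}$ with $\|u_\varphi\|_{cb}$.

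For the converse, suppose the extension property holds with constant $C$. Given a unitary family $(u_t)_{t\in\Lambda}$ on $H$, the function $z:t\mapsto u_t$ satisfies $\sup\nl_\Lambda\|z\|\le 1$, so there is $\varphi\in CB(G,B(H))$ with $\|\varphi\|_{CB(G,B(H))}\le C$ and $\varphi(t)=u_t$. Let $u_\varphi:C^*(G)\to B(H)$ be the associated map; then for any $N$ and any finitely supported $a:\Lambda\to M_N$, applying $\mathrm{id}_{M_N}\otimes u_\varphi$ (which has cb-norm at most $C$) to $\sum a_t\otimes U_G(t)$ yields $\sum a_t\otimes u_t$, whence $\|\sum a_t\otimes u_t\|_{\min}\le C\|\sum a_t\otimes U_G(t)\|_{\min}$. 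Taking the supremum over unitary families recovers \eqref{ed13}, so $\Lambda$ is completely Sidon with constant $C$ by Remark \ref{rd13}. Finally, for the last sentence, since the cb-norm of $u_\varphi$ is the supremum over $N$ of the matrix-level norms, and the quantity $\|\sum a_t\otimes u_t\|_{\min}$ with $a_t\in M_N$ only involves finitely many finite-dimensional coordinates, the reduction to finite-dimensional $H$ follows from the corresponding reduction already noted in Remark \ref{rd13}.

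I do not expect a serious obstacle here, since the statement is essentially a translation of Proposition \ref{pfz}(ii) into the $B(G,A)$ notation of \eqref{97}. The only point requiring minor care is the reduction to finite-dimensional $H$: one must check that an extension with the right cb-norm exists already when $H$ is finite dimensional and that this suffices to verify \eqref{ed13}, but this is immediate from the finite-dimensional reduction granted in Remark \ref{rd13} together with the observation that \eqref{ed13} for fixed $N$ only tests finitely many matrix entries of the $u_t$.
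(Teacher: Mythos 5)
Your proposal is correct and follows essentially the same route as the paper: the paper states Proposition \ref{pd1} as an immediate reformulation of Proposition \ref{pfz}, using the notation \eqref{97}, with the finite-dimensional reduction supplied by Remark \ref{rd13} --- exactly the three ingredients you assemble. The only caveat is that your parenthetical justification that $\|\sum a_t\otimes u_t\|_{\min}$ ``only involves finitely many finite-dimensional coordinates'' is not itself an argument (the $u_t$ act on an infinite-dimensional $H$); the genuine content of that reduction is the cited fact in Remark \ref{rd13}, which you do invoke, so the proof stands.
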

The next lemma is a simple refinement of the last statement. The proof
is based on a specific ``extremal" property of the norm in \eqref{ed11}.
\begin{lem}\label{ld1} Let $0<\vp<1$. Let $H$ be a Hilbert space and $c> 0$ a constant.
 Assume that  for any    $z: \Lambda\to B(H)$
 with $\sup\nl_\Lambda\|z\|\le 1$
 there is
$\varphi_0 \in CB(G,B(H))$ with $\| \varphi_0  \|_{ CB(G,B(H))}\le c$
such that $\sup_{\Lambda}\| z-\varphi_0\|\le \vp $.
Then  
for any  $z: \Lambda\to B(H)$
 with $\sup\nl_\Lambda\|z\|\le 1$
  there is
$\varphi \in CB(G,B(H))$ such that $\varphi_{|\Lambda}=z$ with $\| \varphi  \|_{ CB(G,B(H))}\le c/(1-\vp)$.
\end{lem}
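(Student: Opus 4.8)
The plan is to run a standard geometric successive-approximation scheme: the hypothesis only gives an approximate extension, with uniform error at most $\vp$ on $\Lambda$, but by reapplying it to the suitably rescaled residuals one builds an \emph{exact} extension as the sum of a norm-convergent series, paying exactly the factor $(1-\vp)^{-1}$.

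Concretely, I would fix $z:\Lambda\to B(H)$ with $\sup\nl_\Lambda\|z\|\le 1$. Applying the hypothesis to $z$ gives $\varphi_0\in CB(G,B(H))$ with $\|\varphi_0\|_{CB(G,B(H))}\le c$ and $\sup\nl_\Lambda\|z-\varphi_0\|\le\vp$. The crucial feature to exploit is \emph{homogeneity} of the hypothesis: it is assumed for every $z$ of sup-norm $\le 1$, so I may feed it the normalized residual. Inductively, suppose $\varphi_0,\dots,\varphi_{n-1}$ have been chosen with $\sup\nl_\Lambda\|z-\sum_{k<n}\varphi_k\|\le\vp^{n}$. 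Then $\vp^{-n}(z-\sum_{k<n}\varphi_k)_{|\Lambda}$ has sup-norm $\le 1$, so the hypothesis produces a map of cb-norm $\le c$ which, after rescaling by $\vp^{n}$, yields $\varphi_n\in CB(G,B(H))$ with $\|\varphi_n\|_{CB(G,B(H))}\le c\vp^{n}$ and $\sup\nl_\Lambda\|z-\sum_{k=0}^{n}\varphi_k\|\le\vp^{n+1}$.

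To finish, set $\varphi=\sum_{n\ge 0}\varphi_n$. Since $CB(G,B(H))=CB(C^*(G),B(H))$ is a Banach space and $\sum_n\|\varphi_n\|_{CB(G,B(H))}\le c\sum_n\vp^{n}=c/(1-\vp)$, the series is absolutely convergent and defines $\varphi\in CB(G,B(H))$ with $\|\varphi\|_{CB(G,B(H))}\le c/(1-\vp)$. To identify $\varphi_{|\Lambda}$, I would use that for each $t\in\Lambda$ the evaluation $\psi\mapsto\psi(t)=u_\psi(U_G(t))$ is a contraction from $CB(G,B(H))$ to $B(H)$, since $\|U_G(t)\|=1$ forces $\|\psi(t)\|\le\|\psi\|_{CB(G,B(H))}$. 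Hence $\varphi(t)=\lim_n\sum_{k\le n}\varphi_k(t)=z(t)$ for every $t\in\Lambda$, the last equality coming from $\sup\nl_\Lambda\|z-\sum_{k\le n}\varphi_k\|\le\vp^{n+1}\to 0$. This gives $\varphi_{|\Lambda}=z$ and the announced norm bound.

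The argument is essentially routine, so there is no serious obstacle; the two points requiring care are the homogeneity used to reinject each rescaled residual into the hypothesis (legitimate precisely because the assumption is made for all $z$ with $\sup\nl_\Lambda\|z\|\le1$) and the completeness of $CB(G,B(H))$ needed to sum the geometric series. The only genuinely conceptual step is the realization that an arbitrarily small \emph{fixed-gauge} error $\vp<1$ can be upgraded to an exact interpolation purely by summing the errors geometrically, with no recourse to compactness or weak-$*$ limits.
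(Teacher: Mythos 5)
Your proof is correct and is essentially the paper's own argument: the paper likewise applies the hypothesis to the rescaled residual (there written as $(\varphi_0-z)/\vp$, iterated) and sums the geometric series $\sum_j \vp^j\varphi_j$ in the Banach space $CB(G,B(H))$, which is exactly your scheme up to how the scaling factor $\vp^n$ is bookkept. Your additional check that evaluation at each $t\in\Lambda$ is contractive, so that the series interpolates $z$ pointwise, is a detail the paper leaves implicit.
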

\begin{proof} Applying the assumption to the function $(\varphi_0 -z)/\vp$
we find $\varphi_1\in CB(G,B(H))$ with $\| \varphi_1  \|_{ CB(G,B(H))}\le c$
such that $\sup_{\Lambda} \|z-\varphi_0-\vp \varphi_1\|\le \vp^2$.
Repeating this step, we obtain  $\varphi_j$ with $\| \varphi_j  \|_{ CB(G,B(H))}\le c$
such that $\sup_{\Lambda} \|z-\varphi_0-\cdots-\vp^j \varphi_j\|\le \vp^{j+1}$.
Then $\varphi=\sum\nl_0^\infty \vp^j\varphi_j$ gives us the desired function.
\end{proof}
\begin{rem}[On completely positive definite functions]\label{cpd}
We will say (following \cite{Pa2}) that $\varphi: G \to A$ is completely positive definite
 if for any finite subset $\{t_1,...,t_n\}\subset G$
 we have $[ \varphi(t_i^{-1}t_j)] \in M_n(A)_+$.
By classical results (due to Naimark,  see \cite[p. 51]{Pa2})
  $\varphi\in CP(G,A)$ iff $\varphi$ is completely positive definite.
  Assuming $A\subset B(H)$ $\varphi$ is completely positive definite
  iff there are $\hat H$,    $\pi: G \to B(\hat H)$
and   $\xi \in B(H,\hat H)$ such that 
$\varphi(.) =\xi^*  \pi(. )\xi $. When $A=B(H)$,
 by a polarization argument \eqref{e2} shows
that any $\varphi\in CB(G,A)$ can be written
as  a linear combination $\varphi=\varphi_1 -\varphi_2 +i(\varphi_3 -\varphi_4 )$
with $\varphi_j\in CP(G,A)$ for all $j=1,...,4$.
\end{rem}
  
 The spaces $CB(G,A)$ and  $D(G,A)$ can also be viewed
 as spaces of multipliers.
 To  any $\varphi: G \to A$
 we associate a ``multiplier"
 $M_\varphi: \C[G] \to C^*(G) \otimes_{\min} A$  that takes
 $t\in G$ to $U_G(t) \otimes \varphi(t)$.
 
 \begin{pro}\label{pd16}   The multiplier $M_\varphi$
 extends to a c.b. (resp. decomposable) map
 from $C^*(G)$ to $C^*(G) \otimes_{\min} A$ (resp. $C^*(G) \otimes_{\max} A$)
 iff $\varphi\in CB(G,A)$ (resp. $\varphi\in D(G,A)$), and we have
$$\|\varphi\|_{CB(G,A)}=\|  M_\varphi:  C^*(G)   \to C^*(G) \otimes_{\min} A \|_{cb} $$
$$\|\varphi\|_{D(G,A)}=\|  M_\varphi:  C^*(G)   \to C^*(G) \otimes_{\max} A \|_{dec}=\|  M_\varphi:  C^*(G)   \to C^*(G) \otimes_{\min} A \|_{dec}.$$
Moreover, $\varphi\in CP(G,A)$
iff $M_\varphi$
 extends to a c.p. map
 from $C^*(G)$ to $C^*(G) \otimes_{\max} A$,
 or equivalently a c.p. map
 from $C^*(G)$ to $C^*(G) \otimes_{\min} A$.
 \end{pro}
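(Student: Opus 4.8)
The plan is to factor the multiplier $M_\varphi$ through two canonical structure maps on $C^*(G)$ and, conversely, to recover $u_\varphi$ from $M_\varphi$ by a slice map, so that all four assertions reduce to standard functoriality of the relevant norms under tensoring and composition. First I would introduce the comultiplication $\Delta\colon C^*(G)\to C^*(G)\otimes_{\max}C^*(G)$, namely the $*$-homomorphism arising (by the universal property of $C^*(G)$) from the diagonal unitary representation $t\mapsto U_G(t)\otimes U_G(t)$; composing with the canonical $*$-homomorphism $q\colon C^*(G)\otimes_{\max}C^*(G)\to C^*(G)\otimes_{\min}C^*(G)$ yields its minimal version $\Delta_{\min}$. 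Both are $*$-homomorphisms, hence completely contractive and c.p. Second I would use the co-unit $\chi\colon C^*(G)\to\C$, the character associated with the trivial representation $t\mapsto 1$, which is a state. The two identities on which everything rests, both checked on the generators $U_G(t)$, are
\[ M_\varphi=(\mathrm{id}_{C^*(G)}\otimes u_\varphi)\circ\Delta \quad\text{and}\quad u_\varphi=(\chi\otimes\mathrm{id}_A)\circ M_\varphi. \]
Note that the second identity does double duty: whenever $M_\varphi$ extends to $C^*(G)$, its right-hand side is defined on all of $C^*(G)$ and agrees with $u_\varphi$ on $\C[G]$, so it simultaneously provides the extension of $u_\varphi$ and controls its norm.

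For the completely bounded (minimal) statement I would argue as follows. If $\varphi\in CB(G,A)$, then $\mathrm{id}_{C^*(G)}\otimes u_\varphi$ is c.b. on the minimal tensor product with cb-norm equal to $\|u_\varphi\|_{cb}$ (functoriality and injectivity of $\otimes_{\min}$), and since $\Delta_{\min}$ is completely contractive the first identity gives $\|M_\varphi\|_{cb}\le\|\varphi\|_{CB(G,A)}$. Conversely the slice map $\chi\otimes\mathrm{id}_A\colon C^*(G)\otimes_{\min}A\to A$ is u.c.p., hence completely contractive, so the second identity yields $\|\varphi\|_{CB(G,A)}=\|u_\varphi\|_{cb}\le\|M_\varphi\|_{cb}$; equality follows.

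For the decomposable statement I would run the same scheme with the maximal tensor product. If $\varphi\in D(G,A)$, then by \eqref{e8} the map $\mathrm{id}_{C^*(G)}\otimes u_\varphi$ is decomposable from $C^*(G)\otimes_{\max}C^*(G)$ to $C^*(G)\otimes_{\max}A$ with dec-norm at most $\|u_\varphi\|_{dec}$, and composing with the $*$-homomorphism $\Delta$ (dec-norm $1$) gives $\|M_\varphi\|_{dec}\le\|\varphi\|_{D(G,A)}$ into $\otimes_{\max}A$; the slice map $\chi\otimes\mathrm{id}_A$, now c.p. contractive on the maximal tensor product, gives the reverse inequality and hence equality. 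The coincidence of the minimal and maximal dec-norms I would obtain by noting that the min-valued multiplier is $q'\circ M_\varphi$ for the canonical quotient $q'\colon C^*(G)\otimes_{\max}A\to C^*(G)\otimes_{\min}A$, so its dec-norm is at most the maximal one, while the slice map $\chi\otimes\mathrm{id}_A$ on $\otimes_{\min}A$ (again c.p. contractive) bounds $\|\varphi\|_{D(G,A)}$ above by the minimal dec-norm; the three quantities are therefore equal. Finally, for the c.p. assertion, if $\varphi\in CP(G,A)$ then $u_\varphi$ is c.p., $\mathrm{id}\otimes u_\varphi$ is c.p. on $\otimes_{\max}$ (tensor of c.p. maps), and $\Delta$ is c.p., so $M_\varphi$ into $\otimes_{\max}A$ is c.p., and post-composing with $q'$ makes the min-valued $M_\varphi$ c.p.; conversely slicing by the state $\chi$ recovers $u_\varphi$ as a c.p. map.

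The computations are all routine; the only point requiring a little care is the identification of the minimal and maximal dec-norms of $M_\varphi$, and this is exactly where I would lean on the pairing between the quotient $q'$ (to pass from $\max$ to $\min$) and the slice map $\chi\otimes\mathrm{id}_A$ (to pass back), both of which are c.p. contractions. The genuine content is thus concentrated in the two displayed identities, i.e.\ in recognizing $\Delta$ and $\chi$ as the comultiplication and co-unit that endow $C^*(G)$ with its natural Hopf-type structure.
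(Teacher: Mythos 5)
Your proof is correct and is essentially the paper's own argument: your comultiplication $\Delta$ and co-unit $\chi$ are exactly the paper's diagonal embeddings $J_{\min},J_{\max}$ and the trivial-representation character $u_1$, and the two identities $M_\varphi=(\mathrm{id}\otimes u_\varphi)\circ\Delta$ and $u_\varphi=(\chi\otimes\mathrm{id}_A)\circ M_\varphi$, together with functoriality of $\otimes_{\min}$, the estimate \eqref{e8} for $\otimes_{\max}$, and the max-to-min quotient, are precisely the steps the paper uses. Your treatment of the equality of the minimal and maximal dec-norms is in fact slightly more explicit than the paper's ``a fortiori'' remark, but the underlying chain of inequalities is the same.
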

  \begin{proof}
  Let $\pi_1: G \to \C=B(\C)$ be the trivial representation and let
  $u_1: C^*(G) \to \C=B(\C)$ be the associate $*$-homomorphism.
   Note that $u_\varphi= (u_1 \otimes Id_A) M_\varphi$.
   This shows that if $M_\varphi$ is either c.b., c.p. or decomposable
   with values in $C^*(G) \otimes_{\min} A$
   then the same is true for $u_\varphi$.
   Conversely,  if $\|u_\varphi\|_{cb}\le 1$  then
    $\|Id_{C^*(G)}\otimes  u_\varphi :C^*(G) \otimes_{\min} C^*(G)\to C^*(G) \otimes_{\min} A\| _{cb}\le 1$.
    Let $J_{\min} : C^*(G) \to C^*(G) \otimes_{\min} C^*(G)$ be the diagonal embedding
    taking $t\in G$ to $(t,t)\in G \times G$ (corresponding to $U_G \simeq U_G \otimes U_G$ as representations on $G$).
    Then $M_\varphi=(Id_{C^*(G)}\otimes  u_\varphi )J_{\min} $
   and hence  $\|M_\varphi :C^*(G)  \to C^*(G) \otimes_{\min} A\| _{cb}\le 1$.
Similarly,  $u_\varphi$ c.p. implies that
$M_\varphi :C^*(G)  \to C^*(G) \otimes_{\min} A$ is c.p..\\
Assume $\|u_\varphi\|_{dec}\le 1$.
Then by 
\eqref{e8}
$\|Id_{C^*(G)}\otimes  u_\varphi :C^*(G) \otimes_{\max} C^*(G)\to C^*(G) \otimes_{\max} A\| _{dec}\le 1$.
 Let $J_{\max} : C^*(G) \to C^*(G) \otimes_{\max} C^*(G)$ be the analogous diagonal embedding so that
 $M_\varphi=(Id_{C^*(G)}\otimes  u_\varphi )J_{\max} $.
 It follows that  $\|M_\varphi :C^*(G)  \to C^*(G) \otimes_{\max} A\| _{dec}\le 1$.
 A fortiori,  
 composing
 with the $*$-homomorphism $ C^*(G) \otimes_{\max} A  \to C^*(G) \otimes_{\min} A$ we have
 $\|M_\varphi :C^*(G)  \to C^*(G) \otimes_{\min} A\| _{dec}\le 1$.
 \end{proof}
  \begin{rem}\label{pr11} By a classical result (see \cite{Ey}) $B(G)$
  (which is isometrically the same as $CB(G,\C)$ or $D(G,\C)$)
  is a Banach algebra for the pointwise product. In the
    operator valued case there are \emph{two distinct analogues} of this fact,  as follows.
  Let $A_j$ be $C^*$-algebras ($j=1,2$).
  Let $\varphi_j\in CB(G , A_j)$ (resp.  $\varphi_j\in D(G , A_j)$).
  Then the function $\varphi_1 \otimes \varphi_2: G \to A_1\otimes A_2$
  is in $CB(G, A_1\otimes_{\min} A_2)$ (resp. $D(G, A_1\otimes_{\max} A_2)$
  with norm  
  $$\| \varphi_1\otimes \varphi_2\|_{CB(G , A_1\otimes_{\min} A_2)}   \le \| \varphi_1\|_{CB(G , A_1)} \| \varphi_2\|_{CB(G , A_2)}
  $$
  $$ \text{ (resp.  }
  \| \varphi_1\otimes \varphi_2\|_{D(G , A_1\otimes_{\max} A_2)}   \le \| \varphi_1\|_{D(G , A_1)} \| \varphi_2\|_{D(G , A_2)}.)$$
  To check this it suffices to observe
  that $u_{\varphi_1\otimes \varphi_2}=(u_{\varphi_1 }\otimes u_{  \varphi_2}) J_{\min}$
  (resp. $u_{\varphi_1\otimes \varphi_2}=(u_{\varphi_1 }\otimes u_{  \varphi_2}) J_{\max}$).
  Moreover,  in both cases $\varphi_1\otimes \varphi_2$ is completely positive definite
  if  each $\varphi_1, \varphi_2$ is so.
  \end{rem}
 We now investigate the converse direction: how to
 obtain a multiplier from a linear mapping.
 
 \begin{rem}\label{pr1} We have an embedding $G \to  G\times G$ as a diagonal subgroup
  $\Delta_G\subset G\times G$.
  In that case it is well known (see e.g. \cite[p. 154]{P4})
  that we have a c.p. projection  from $C^*(G\times G)$
  onto the closed span of the subgroup $\Delta_G$ in $C^*(G\times G)$.
  It follows that the  map
  $P_{\max} : C^*(G) \otimes_{\max}C^*(G) \to C^*(G) $
   defined by $P_{\max}  (U_G(s) \otimes U_G(t))=U_G(t)$ if $s=t$
  and $=0$ otherwise is a unital  c.p. map such that
  $\| P_{\max}\|=\| P_{\max}\|_{dec}=1$.
 Moreover, obviously $P_{\max} J_{\max}=Id_{C^*(G)}$.
 Therefore $J_{\max}  P_{\max}$ is a unital c.p. projection (a conditional expectation)
 from $ C^*(G) \otimes_{\max}C^*(G)$ to  $ J_{\max} (C^*(G) )\simeq C^*(\Delta_G)$.
 \end{rem}
 For any $t\in G$ we denote by $f_t^G\in M_G^* $
 the functional defined by
 $$f_t^G(x)=\langle  \d_t, x \d_e\rangle.$$
Note that $(f_t^G)$ is biorthogonal to $(\lambda_G(t))$.

 The next result, essentially
 from \cite[p.150]{P4}, is a   refinement
 of Remark \ref{pr1}, that
  illustrates the usefulness of the Fell absorption 
 principle. The latter says that for any unitary representation
 $\pi$ on $G$ the representation $\lambda_G\otimes \pi$
 is unitarily equivalent to $\lambda_G \otimes I$ (see e.g. \cite[p. 149]{P4}).

\begin{thm}\label{ppp} We have an isometric ($C^*$-algebraic) embedding
$$J_G\colon \ C^*(G) \subset M_G \otimes_{\rm max}
M_G$$
taking $U_G(t)$ to $\lambda_G(t) \otimes \lambda_G(t)$ $(t\in G)$,
and  
  a completely contractive c.p. 
  mapping $$P_G\colon \ M_G \otimes_{\rm max}
M_G\to C^*(G)  $$
such that $$Id_{C^*(G) }=P_GJ_G.$$ 
Moreover,  
$\forall a,b\in M_G$, $a=\sum\nl_{t\in G} a(t) \lambda_G(t)$,
 $b=\sum\nl_{t\in G} b(t) \lambda_G(t)$ we have (absolutely convergent series)
 $$P_G(a\otimes b)=  \sum\nl_{t\in G} a(t) b(t) U_G(t).$$
\end{thm}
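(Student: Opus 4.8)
The plan is to build both maps by hand and deduce that $J_G$ is isometric from the identity $P_GJ_G=Id_{C^*(G)}$ together with $\|P_G\|_{cb}\le 1$. First I would define $J_G$. In $M_G\otimes_{\max}M_G$ the images of the two factors commute, so $t\mapsto(\lambda_G(t)\otimes 1)(1\otimes\lambda_G(t))=\lambda_G(t)\otimes\lambda_G(t)$ is a unitary representation of $G$; by the universal property of $C^*(G)$ it extends to a unital $*$-homomorphism $J_G:C^*(G)\to M_G\otimes_{\max}M_G$ with $J_G(U_G(t))=\lambda_G(t)\otimes\lambda_G(t)$. Being a $*$-homomorphism, $J_G$ is automatically contractive.

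The heart of the matter is to produce $P_G$. Let $\rho_G$ be the right regular representation, $\rho_G(t)\delta_s=\delta_{st^{-1}}$; it commutes with $\lambda_G$, satisfies $\rho_G(s)\rho_G(t)=\rho_G(st)$, and $\lambda_G(t)\mapsto\rho_G(t)$ extends to a normal $*$-isomorphism of $M_G$ onto its commutant $M_G'$. On $\ell_2(G)\otimes\cl H$ I would take $\pi_1(x)=x\otimes 1$ and let $\pi_2$ be the $*$-representation of $M_G$ determined by $\pi_2(\lambda_G(t))=\rho_G(t)\otimes U_G(t)$. Here the Fell absorption principle enters decisively: since $\rho_G\cong\lambda_G$, the representation $t\mapsto\rho_G(t)\otimes U_G(t)$ of $G$ is unitarily equivalent to the amplification $\rho_G(\cdot)\otimes I$, which extends to a normal $*$-representation of $M_G$, and hence so does $\pi_2$. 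A direct check on generators using $\lambda_G(s)\rho_G(t)=\rho_G(t)\lambda_G(s)$ shows that $\pi_1(M_G)$ and $\pi_2(M_G)$ commute, so by the universal property of the maximal tensor product the pair induces a $*$-representation $\Sigma:M_G\otimes_{\max}M_G\to B(\ell_2(G)\otimes\cl H)$ with $\Sigma(x\otimes y)=\pi_1(x)\pi_2(y)$.

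Now let $V:\cl H\to\ell_2(G)\otimes\cl H$ be the isometry $Vh=\delta_e\otimes h$ and set $P_G(\cdot)=V^*\Sigma(\cdot)V$; as a compression of a $*$-representation by an isometry it is unital, completely positive, and (being unital c.p.) completely contractive. The key computation is
\[
\Sigma(\lambda_G(s)\otimes\lambda_G(t))=(\lambda_G(s)\otimes 1)(\rho_G(t)\otimes U_G(t))=\lambda_G(s)\rho_G(t)\otimes U_G(t),
\]
and since $\rho_G(t)\delta_e=\delta_{t^{-1}}$ and $\lambda_G(s)\delta_{t^{-1}}=\delta_{st^{-1}}$, compressing by $V$ yields $V^*(\lambda_G(s)\rho_G(t)\otimes U_G(t))V=\langle\delta_e,\delta_{st^{-1}}\rangle\,U_G(t)$, i.e. $P_G(\lambda_G(s)\otimes\lambda_G(t))=\delta_{s,t}\,U_G(t)$. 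Expanding $a=\sum_s a(s)\lambda_G(s)$, $b=\sum_t b(t)\lambda_G(t)$ and invoking the normality of $\pi_1,\pi_2$, this gives $P_G(a\otimes b)=\sum_{t}a(t)b(t)U_G(t)$, the series converging absolutely by Cauchy--Schwarz since $\sum_t|a(t)b(t)|\le\|a\delta_e\|_2\,\|b\delta_e\|_2$; in particular $P_G$ maps into $C^*(G)$. Finally $P_GJ_G(U_G(t))=P_G(\lambda_G(t)\otimes\lambda_G(t))=U_G(t)$, so by density $P_GJ_G=Id_{C^*(G)}$, whence $\|x\|=\|P_GJ_G(x)\|\le\|J_G(x)\|\le\|x\|$ forces $J_G$ to be isometric.

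The step I expect to be the main obstacle is justifying that $\pi_2$ is a genuine (normal) $*$-representation of the von Neumann algebra $M_G$, and not merely of the group algebra $\C[G]$: one must know that $\rho_G(\cdot)\otimes U_G(\cdot)$ generates the same von Neumann algebra as an amplification of $\rho_G$, which is exactly the content of Fell absorption. This is also what makes the \emph{maximal} tensor product the right target, since composing $J_G$ with the quotient onto $M_G\otimes_{\min}M_G$ and applying Fell absorption on the spatial level would only recover the reduced norm $\|\lambda_G(x)\|$, so $J_G$ would be isometric there only for amenable $G$.
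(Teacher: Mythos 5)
Your proof is correct and is essentially the paper's own argument: both construct $P_G$ by compressing the product of two commuting normal representations of $M_G$ on $\ell_2(G)\otimes H$ to the subspace $\delta_e\otimes H$, with Fell absorption and the equivalence $\rho_G\simeq\lambda_G$ supplying the crucial normality, and then deduce isometry of $J_G$ from $P_GJ_G=Id$ and complete contractivity of $P_G$. The only cosmetic difference is that the paper absorbs an arbitrary unitary representation $\pi$ into the left regular factor (taking $\pi_1(\lambda_G(t))=\lambda_G(t)\otimes\pi(t)$ and $\pi_2(\lambda_G(t))=\rho_G(t)\otimes I$) and finishes with a supremum over all $\pi$, whereas you absorb the universal representation $U_G$ into the right regular factor, which lands the compression directly in $C^*(G)\subset B(\cl H)$ and makes the supremum unnecessary.
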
 We illustrate this by the following diagram: we have  $J_G=(\Upsilon_G \otimes  \Upsilon_G)J_{\max}$:
$$\xymatrix{
C^*(G) \otimes_{\rm max} 
C^*(G)\ar[r]^{\quad \Upsilon_G \otimes  \Upsilon_G} 
& M_G \otimes_{\rm max} M_G \ar[d]^{P_G} \\
C^*(G)\ar[u]^{J_{\max}} \ar[r]^{ Id_{C^*(G)}} & C^*(G) }$$
where  $J_{\max}: C^*(G) \to C^*(G) \otimes_{\max} C^*(G)$ (as before)
and $\Upsilon_G: C^*(G) \to M_G$  are
  the 
  $*$-homomorphisms determined by
  \begin{equation}\label{da06}\forall t\in G\quad J_{\max} ( U_G(t) )=U_G(t) \otimes U_G(t) \text{  and  } \Upsilon_G( U_G(t) )= \lambda_G(t).
  \end{equation}
  \begin{proof}
Let $x\in M_G \otimes M_G$ (algebraic tensor product).
For $s,t\in G$ let
  $x(s,t) = (f_s^G \otimes f_t^G)(x)$.
Note that $(a\otimes b)(s,t)=  f_s^G( a)f_t^G( b)$
and $(\sum\nl_s |f_s^G( y)|^2)^{1/2} \le \|y\|_{M_G}$ for any $y\in M_G$.
Therefore  $\sum\nl_t |(a\otimes b)(t,t)| \le \|a\|_{M_G} \|b\|_{M_G}$.
This shows that $\sum\nl_t |x(t,t)|<\infty$ for any $x\in M_G \otimes M_G$.\\
Note for future reference that for any $b\in M_G$
\begin{equation}\label{06}
P_G(\lambda_G(t) \otimes b)= f_t^G(b) U_G(t).
\end{equation}
We will
show the following claim:
\begin{equation}\label{(6)}\left\|\sum\nl_t x(t,t) U_G(t) \right\|_{C^*(G)} 
\le \left\| x\right\|_{M_G \otimes_{\max} M_G} .\end{equation}
Then we set $P_G(x)= \sum\nl_t x(t,t) U_G(t)$.
This implies the result.
Indeed, in the converse direction we have obviously
$$\left\|\sum x(t,t) \lambda_G(t) \otimes \lambda_G(t) \right\|_{\rm max}
\le \left\|\sum x(t,t) U_G(t) \right\|,$$
and hence \eqref{(6)} implies at the same time that $J_G$ defines an isometric
$*$-homomorphism and that the natural (``diagonal") projection   onto
$ J_G({C^*(G)}) $ is a
contractive map (actually a conditional expectation).  
The proof of the claim will actually show that $P_G$ is c.p..
We now prove this claim. Let $\pi\colon \ G\to B(H)$ be
a unitary representation of $G$. 
As usual we denote by $\rho_G$ the right regular representation
taking any $t\in G$ to the unitary of right translation by $t^{-1}$.
We introduce a pair of commuting
representations $(\pi_1,\pi_2)$ on $ \ell_2(G)\otimes_2 H$ as follows:
$$\pi_1(\lambda_G(t)) = \lambda_G(t) \otimes \pi(t)\quad \hbox{and}\quad
\pi_2(\lambda_G(t)) = \rho_G(t)\otimes I.$$
Note that both $\pi_1$ and $\pi_2$ extend to normal isometric representations on
$M_G$. For $\pi_1$  this follows from the Fell absorption
principle. For $\pi_2$, it follows from the fact that $\rho_G
\simeq~\lambda_G$ (indeed if $W\colon \ \ell_2(G)\to \ell_2(G)$ is the
unitary taking $\delta_t$ to $\delta_{t^{-1}}$, then $W^*
\lambda_G(\cdot)W = \rho_G(\cdot)$).\\
We denote by
  $\pi_1 .\pi_2 : M_G \otimes M_G \to { B(\ell_2(G)\otimes_2 H)}$
  the linear map (actually a $*$-homomorphism) defined on finite sums of rank 1 tensors by
  $(\pi_1 .\pi_2) (\sum a_j\otimes b_j)=\sum \pi_1(a_j) \pi_2(b_j)$.

\n Since $\pi_1$ and $\pi_2$ have commuting ranges, we have
\begin{equation}\label{(770)}  \left\| (\pi_1 .\pi_2)(x)\right\|_{ B(\ell_2(G)\otimes_2 H)}
\le \left\| x\right\|_{M_G \otimes_{\max} M_G},\end{equation}
hence compressing  the left-hand side to $K=\delta_e \otimes H \subset
\ell_2(G) \otimes_2 H$, we obtain (note that $\langle  \delta_e,
\lambda_G(s) \rho_G(t)\delta_e \rangle =1$
if $s=t$ and zero otherwise) 
$$\sum\nl_t x(t,t) \pi(t)= P_K(\pi_1 .\pi_2)(x)_{|K}$$ and hence
\begin{equation}\label{(77)}\left\|\sum\nl_t x(t,t) \pi(t) \right\|_{B(H)}
 \le
  \left\| x\right\|_{M_G \otimes_{\max} M_G}.\end{equation}
Finally, taking the supremum over $\pi$, we obtain the announced claim
\eqref{(6)}.
This argument shows that
$P_G$ is c.p. and  $\|P_G\|_{cb}\le 1$.
\end{proof}
\begin{rem}\label{07}
Let us denote $\bar A$ the complex conjugate of a $C^*$-algebra $A$,
i.e. $A$ equipped with scalar multiplication defined 
for $\alpha\in \C,a \in A$ by: $\alpha  \bar a=\ovl{\bar{ \alpha} a}$,
where $\bar a$ denotes $a$ viewed as an element of $\bar A$. 
Note that  the correspondence 
$U_G(t) \mapsto \ovl{U_G(t)}$ (resp. $\lambda_G(t) \mapsto \ovl{\lambda_G(t)}$)
extends to a $\C$-linear isomorphism from $C^*(G)$ to $\ovl{C^*(G)}$
(resp.  from $M_G$ to $\ovl{M_G}$).
Therefore the following variant of  Theorem \ref{ppp}
also holds: {\it There is an embedding $j_G:  C^*(G) \to \ovl{M_G} \otimes_{\max} M_G$ 
that takes $t\in G$ to $\ovl{\lambda_G(t)}  \otimes {\lambda_G(t)}$
and a contractive c.p. map $p_G :\ovl{M_G} \otimes_{\max} M_G \to C^*(G) $
such that $p_G(   \ovl{\lambda_G(t)} \otimes  {\lambda_G(s)} )=0$ for $t\not=s$
satisfying $p_Gj_G=Id_{ C^*(G)}$.}
\end{rem}
\begin{cor}\label{cc1} Let $\Gamma=\F_{\Lambda}$. A subset 
$\Lambda\subset G$ is   completely Sidon  iff
there is  a ${\cl U}$ in $D(C^*(G), M_\Gamma)$  
such that
${\cl U}(U_G(t)) = \lambda_\Gamma(g_t)$  for all $t\in \Lambda$.
In that case, the Sidon constant is at most   $ \|{\cl U}\|_{dec}^2.$
 \end{cor}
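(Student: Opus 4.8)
The equivalence has two directions, and the forward one is essentially already in hand. If $\Lambda$ is completely Sidon with constant $C$, the last paragraph of Remark \ref{ko} produces exactly a map $\cl U\in D(C^*(G),M_\Gamma)$ with $\cl U(U_G(t))=\lambda_\Gamma(g_t)$ for all $t\in\Lambda$ and $\|\cl U\|_{dec}\le C$. So the real content is the converse, and with it the quantitative estimate; I will prove both at once.

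For the converse, assume $\cl U\in D(C^*(G),M_\Gamma)$ satisfies $\cl U(U_G(t))=\lambda_\Gamma(g_t)$ for $t\in\Lambda$. The plan is to manufacture a single decomposable map $\check u\colon C^*(G)\to C^*(\Gamma)$ that extends the candidate Sidon map $u\colon U_G(t)\mapsto U_\Gamma(g_t)$, by sandwiching $\cl U$ between a conjugation-twisted diagonal embedding and the Fell-absorption projection of Remark \ref{07}. First I form the $*$-homomorphism $\ov J_{\max}\colon C^*(G)\to \ov{C^*(G)}\otimes_{\max}C^*(G)$ determined by $U_G(t)\mapsto \ov{U_G(t)}\otimes U_G(t)$; this is the conjugate analogue of $J_{\max}$ and is legitimate because $t\mapsto \ov{U_G(t)}\otimes U_G(t)$ is a unitary representation of $G$. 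Next I apply $\ov{\cl U}\otimes\cl U\colon \ov{C^*(G)}\otimes_{\max}C^*(G)\to \ov{M_\Gamma}\otimes_{\max}M_\Gamma$, which is decomposable by \eqref{e8}. Finally I compose with the contractive c.p. map $p_\Gamma\colon \ov{M_\Gamma}\otimes_{\max}M_\Gamma\to C^*(\Gamma)$ of Remark \ref{07} (applied to $\Gamma=\F_\Lambda$), which sends $\ov{\lambda_\Gamma(a)}\otimes\lambda_\Gamma(a)$ to $U_\Gamma(a)$ and annihilates the off-diagonal terms. Setting $\check u=p_\Gamma(\ov{\cl U}\otimes\cl U)\ov J_{\max}$, I trace a generator $t\in\Lambda$ through the three maps:
$$U_G(t)\longmapsto \ov{U_G(t)}\otimes U_G(t)\longmapsto \ov{\lambda_\Gamma(g_t)}\otimes\lambda_\Gamma(g_t)\longmapsto U_\Gamma(g_t),$$
so $\check u$ agrees with $u$ on the spanning unitaries and hence $\check u_{|C_\Lambda}=u$.

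For the norm, $\ov J_{\max}$ is a unital $*$-homomorphism and $p_\Gamma$ is c.p. and contractive, so both have $\mathrm{dec}$-norm $1$; by \eqref{e8} together with the fact that conjugation preserves the dec-norm, $\|\ov{\cl U}\otimes\cl U\|_{dec}\le\|\cl U\|_{dec}^2$. Submultiplicativity of the dec-norm under composition then yields $\|\check u\|_{cb}\le\|\check u\|_{dec}\le\|\cl U\|_{dec}^2$. Since $\check u_{|C_\Lambda}=u$, this gives $\|u\|_{cb}\le\|\cl U\|_{dec}^2$, which by Definition \ref{d1} means $\Lambda$ is completely Sidon with constant at most $\|\cl U\|_{dec}^2$. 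The only step demanding genuine care is the middle one: one must stay inside the $\max$ tensor product throughout, so that $\ov{\cl U}\otimes\cl U$ is decomposable and may legitimately be post-composed with $p_\Gamma$ (which is defined on $\ov{M_\Gamma}\otimes_{\max}M_\Gamma$), and one must place the conjugation on a single leg precisely so that the diagonal unitaries $\ov{\lambda_\Gamma(g_t)}\otimes\lambda_\Gamma(g_t)$ are exactly the images of $U_\Gamma(g_t)$ under $j_\Gamma$ that $p_\Gamma$ inverts. Everything else is bookkeeping.
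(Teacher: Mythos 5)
Your proof is correct and takes essentially the same route as the paper: the forward direction via Remark \ref{ko}, and for the converse the same sandwich $P_\Gamma({\cl U}\otimes{\cl U})J_{\max}$ of a diagonal $*$-homomorphism, the decomposable tensor square of ${\cl U}$ (via \eqref{e8}), and the c.p.\ diagonal projection, yielding the same bound $\|{\cl U}\|_{dec}^2$. The only difference is cosmetic — you use the conjugate pair $(j_\Gamma,p_\Gamma)$ of Remark \ref{07} where the paper uses $J_{\max}$ and the $P_\Gamma$ of Theorem \ref{ppp} — and your closing claim that one \emph{must} place a conjugation on one leg is not accurate: $P_\Gamma$ inverts the un-barred diagonal $\lambda_\Gamma(\gamma)\otimes\lambda_\Gamma(\gamma)$ just as well, so the bar-twist is optional.
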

  \begin{proof} Assume there is such a $\cl U$. 
  Let $ J_{\max} $ be as in \eqref{da06}.
 Consider the mapping 
  $$J= P_\Gamma ({\cl U}\otimes {\cl U}) J_{\max} .$$
  Clearly $J(U_G(t))=U_\Gamma(g_t)$ for all $t\in \Lambda$.
  By Theorem \ref{ppp} and \eqref{e8} we have
  $$\|J: C^*(G)\to C^*(\Gamma)\|_{dec} \le \|{\cl U}\|_{dec}^2.$$
  A fortiori $\|J\|_{cb} \le \|{\cl U}\|_{dec}^2$. By Proposition \ref{pfz} $\Lambda$ is completely Sidon with constant
  $ \|{\cl U}\|_{dec}^2.$
  \\ For the converse, see Remark \ref{ko}.
  \end{proof}
  
  \begin{pro}\label{pd15}   Let $u\in D(C^*(G), M_G \otimes_{\max} A)$ with $\|u\|_{dec}\le 1$.
 We define $\varphi_u : G \to A$ by
 $$\varphi_u(t)= (f_t^G \otimes Id_A)u(U_G(t)).$$
 Then $\|   \varphi_u \|_{D(G,A)}\le 1$. If $u$ is c.p. then $\varphi_u\in CP(G,A)$. \\
   Moreover, if  there is $\varphi : G \to A$
   such that $u( U_G(t)) =\lambda_G(t)\otimes \varphi(t)$ for all $t\in G$,
  then $ \varphi_u =\varphi$.
 \end{pro}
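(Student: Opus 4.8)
The plan is to identify the operator-valued multiplier $M_{\varphi_u}$ attached to $\varphi_u$ in Proposition \ref{pd16} with a concrete composition of three maps, each of dec-norm at most $1$, and then to quote Proposition \ref{pd16}, which equates $\|\varphi_u\|_{D(G,A)}$ with $\|M_{\varphi_u}: C^*(G)\to C^*(G)\otimes_{\max}A\|_{dec}$. Everything can be assembled from objects already at hand: the $*$-homomorphisms $J_{\max}$ and $\Upsilon_G$ of \eqref{da06}, and the completely contractive c.p. map $P_G: M_G\otimes_{\max}M_G\to C^*(G)$ of Theorem \ref{ppp}.

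Concretely, I would set
$$W=(P_G\otimes Id_A)\,(\Upsilon_G\otimes u)\,J_{\max},$$
viewed as a map $C^*(G)\to C^*(G)\otimes_{\max}A$ after using associativity of the maximal tensor product to let $P_G\otimes Id_A$ act on $M_G\otimes_{\max}M_G\otimes_{\max}A$. First I would evaluate $W$ on a generator. Since $J_{\max}(U_G(t))=U_G(t)\otimes U_G(t)$ and $\Upsilon_G(U_G(t))=\lambda_G(t)$, one gets $(\Upsilon_G\otimes u)J_{\max}(U_G(t))=\lambda_G(t)\otimes u(U_G(t))$; applying $P_G\otimes Id_A$ together with the slice identity \eqref{06}, namely $P_G(\lambda_G(t)\otimes b)=f_t^G(b)U_G(t)$, then yields
$$W(U_G(t))=U_G(t)\otimes (f_t^G\otimes Id_A)(u(U_G(t)))=U_G(t)\otimes\varphi_u(t).$$
This is exactly $M_{\varphi_u}(U_G(t))$, so $W=M_{\varphi_u}$ by linearity and density of $\C[G]$ in $C^*(G)$.

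For the norm estimate, $J_{\max}$ and $\Upsilon_G$ are $*$-homomorphisms (hence of dec-norm $1$) and $u$ has dec-norm $\le 1$, so \eqref{e8} gives $\|\Upsilon_G\otimes u\|_{dec}\le 1$; likewise $\|P_G\otimes Id_A\|_{dec}\le 1$ since $P_G$ and $Id_A$ are c.p. contractions. As the dec-norm is submultiplicative under composition, $\|M_{\varphi_u}\|_{dec}\le 1$, and Proposition \ref{pd16} gives $\|\varphi_u\|_{D(G,A)}\le 1$. If $u$ is c.p., each of the three factors is c.p. (using the c.p. clause of \eqref{e8}), so $M_{\varphi_u}$ is a c.p. map into $C^*(G)\otimes_{\max}A$, whence $\varphi_u\in CP(G,A)$ by the c.p. part of Proposition \ref{pd16}. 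The final assertion is immediate from biorthogonality: if $u(U_G(t))=\lambda_G(t)\otimes\varphi(t)$ then $(f_t^G\otimes Id_A)(\lambda_G(t)\otimes\varphi(t))=f_t^G(\lambda_G(t))\varphi(t)=\varphi(t)$, so $\varphi_u=\varphi$.

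The one point demanding care is that the slice operation $(f_t^G\otimes Id_A)$ defining $\varphi_u$ is genuinely the $A$-valued coefficient produced by $P_G\otimes Id_A$ applied to $\lambda_G(t)\otimes u(U_G(t))$. A priori a functional need not slice a maximal tensor product, but here the operation is legitimized precisely by the fact that $P_G\otimes Id_A$ is an honest map on $M_G\otimes_{\max}M_G\otimes_{\max}A$. This is what makes the identity $W(U_G(t))=U_G(t)\otimes\varphi_u(t)$ valid, and it is the crux that converts the abstract factorization into the stated estimate; the rest of the argument is bookkeeping with \eqref{e8} and Proposition \ref{pd16}.
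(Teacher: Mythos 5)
Your proof is correct and takes essentially the same route as the paper's: the paper forms the identical composition $v=(P_G\otimes Id_A)(u_\lambda\otimes u)J_{\max}$ (where $u_\lambda=\Upsilon_G$), evaluates it on generators via \eqref{06} to get $v(U_G(t))=U_G(t)\otimes\varphi_u(t)$, and bounds its dec-norm by \eqref{e8}. The only cosmetic difference is at the end: where you identify $v$ with the multiplier $M_{\varphi_u}$ and quote Proposition \ref{pd16}, the paper composes directly with the trivial-representation homomorphism $u_1$ to get $u_{\varphi_u}=(u_1\otimes Id_A)v$, which is precisely the step hidden inside your appeal to Proposition \ref{pd16}.
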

  \begin{proof} 
  Let $u_\lambda: C^*(G)\to M_G$ be the $*$-homomorphism
  taking $t\in G$ to $\lambda_G(t)$. By \eqref{e8}
  $$\| u_\lambda\otimes u:  C^*(G) \otimes_{\max}C^*(G)  \to M_G \otimes_{\max}M_G \otimes_{\max} A \|_{dec}\le 1.$$
  Let
  $v= (P_G\otimes Id_A)(u_\lambda\otimes u) J_G $. Then
  $v( U_G(t))=U_G(t)\otimes \varphi_u(t)$ by \eqref{06} and $\|v\|_{dec} \le 1$.
  With $u_1$ associated as above to the trivial representation    
  $u_1 v( U_G(t))=  \varphi_u(t)$ and hence
  $\|\varphi_u \|_{D(G,A)}=\|u_1 v\|_{dec}\le 1$.
  If $u$ is c.p. so is $u_1 v$ and $\varphi_u\in CP(G,A)$.
  The last assertion is immediate.
       \end{proof}
        
        Let $\Gamma$ be another discrete group.
Let $T\in D( C^*(G), M_\Gamma)$.
Let $T({\gamma},s)$ be the associated ``matrix" 
defined by   \begin{equation}\label{se1a} T({\gamma},s)=f_{\gamma}^{\Gamma}( T(U_G(s)) )\end{equation}
and determined by the identity
  $T (s)=\sum\nl_{\gamma} T({\gamma},s) \lambda_\Gamma({\gamma}),$
  where the convergence is in $L_2(\tau_\Gamma)$.
  Note 
  \begin{equation}\label{se1} \sup\nl_{s\in G} (\sum\nl_ {{\gamma}\in \Gamma} | T({\gamma},s) |^2)^{1/2}\le \|T\|.\end{equation}
  
  We will use the following special case of Proposition \ref{pd15}.
  
    \begin{lem}\label{s4} 
    Let $v \in D( C^*(G),C^*(G))$. Let $T_v=u_\lambda v \in D(C^*(G),M_G)$ and let
    $T_v(t,s)$ be the associated matrix      as in \eqref{se1a}.
    Let  $v^{\bullet}: G \to\C$ be the function defined by
     $$v^{\bullet}(t)= T_v(t,t).$$
    Then $v^{\bullet}\in B(G)$ and
$$\| v^{\bullet} \|_{B(G)}= \| v^{\bullet}\|_{CB(G,\C)} = \|v^{\bullet} \|_{D(G)} \le \|v\|_{dec}.$$
        \end{lem}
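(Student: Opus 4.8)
The plan is to recognize Lemma~\ref{s4} as precisely the scalar case $A=\C$ of Proposition~\ref{pd15}, so that essentially no new argument is needed beyond matching the definitions and controlling the dec-norm of the composition $T_v=u_\lambda v$. Thus I would first reduce everything to a single application of that proposition, and only afterwards invoke the scalar isometric identifications to read off the chain of equalities.

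First I would record that $u_\lambda:C^*(G)\to M_G$, being a unital $*$-homomorphism, is completely positive with $\|u_\lambda\|_{dec}=\|u_\lambda\|=1$. Since the dec-norm is submultiplicative under composition (one of Haagerup's basic facts used freely in this note), it follows that
$$\|T_v\|_{dec}=\|u_\lambda v\|_{dec}\le \|u_\lambda\|_{dec}\,\|v\|_{dec}=\|v\|_{dec},$$
so in particular $T_v\in D(C^*(G),M_G)$, its matrix $T_v(t,s)=f_t^G(T_v(U_G(s)))$ is well defined by \eqref{se1a}, and the diagonal $v^{\bullet}(t)=T_v(t,t)$ makes sense (the square-summability in \eqref{se1} guarantees the coefficients exist).

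Next I would identify $M_G\otimes_{\max}\C\cong M_G$ and apply Proposition~\ref{pd15} with $A=\C$ and $u=T_v$. Under this identification the functional $f_t^G\otimes Id_\C$ collapses to $f_t^G$, so the function produced by the proposition is
$$\varphi_{T_v}(t)=(f_t^G\otimes Id_\C)T_v(U_G(t))=f_t^G(T_v(U_G(t)))=T_v(t,t)=v^{\bullet}(t).$$
Hence $v^{\bullet}=\varphi_{T_v}$, and the proposition yields $v^{\bullet}\in D(G,\C)$ together with $\|v^{\bullet}\|_{D(G,\C)}\le \|T_v\|_{dec}\le\|v\|_{dec}$. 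Finally, invoking the isometric identity $B(G)=CB(G,\C)=D(G,\C)$ recorded earlier for scalar-valued functions, all three norms of $v^{\bullet}$ coincide and are bounded by $\|v\|_{dec}$, which is exactly the asserted conclusion.

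There is no real obstacle here; the only point requiring a moment's care is checking that the diagonal-extraction $\varphi_u$ of Proposition~\ref{pd15} genuinely reduces, when $A=\C$, to the diagonal $T_v(t,t)$ of the matrix in \eqref{se1a}. Once the $\max$-tensor product with $\C$ is collapsed and $Id_\C$ is suppressed, this is immediate from the definitions, so the bulk of the work is simply the submultiplicativity estimate for $u_\lambda v$ and the bookkeeping of the two identifications.
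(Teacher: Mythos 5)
Your proposal is correct and follows essentially the same route as the paper: the paper's proof likewise applies Proposition \ref{pd15} with $A=\C$ and $u=u_\lambda v$, identifies $\varphi_u=v^{\bullet}$, and concludes via $\|u_\lambda v\|_{dec}\le\|v\|_{dec}$ together with the isometric identities $B(G)=CB(G,\C)=D(G,\C)$. Your only additions are explicit bookkeeping (the submultiplicativity estimate using $\|u_\lambda\|_{dec}=1$ and the collapse of $M_G\otimes_{\max}\C$ to $M_G$) that the paper leaves implicit.
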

        
\begin{proof}  We apply Proposition \ref{pd15}
with $A=\C$ and $u= u_\lambda v$. Then $\varphi_u=v^{\bullet}$ and
$\|v^{\bullet} \|_{D(G)} \le \|u_\lambda v\|_{dec} \le \|v\|_{dec}$. The
isometric identities
 $CB(G,\C)=D(G,\C)=B(G,\C)$ give the rest.
    \end{proof}
    
     \begin{rem}\label{s5} Let $A$ be a $C^*$-algebra, let  $v \in D( C^*(G),C^*(G)\otimes_{\max} A)$ and let $u= (u_\lambda  \otimes Id_A)v$. We will again denote
    $ v^{\bullet}=\varphi_u$ where $\varphi_u: G \to A$
 is the function defined in Proposition \ref{pd15}.
 We then have $\|v^{\bullet}\|_{D(G,A)} \le  \|v\|_{dec}$. Moreover,
 $ v^{\bullet}\in P(G,A)$ if $v$ is c.p.. 
 \end{rem}
 More generally we will use the following variant of Lemma \ref{s4}.
 \begin{lem}\label{s5b} 
  Let $\Gamma$ be another discrete group.
  Assume that there is a group morphism $q: \Gamma \to G$.
 Let  $T \in D( C^*(G),C^*(\Gamma) )$ 
 such that there is a scalar matrix $[T(\gamma,s)]$ ($\gamma\in \Gamma, s\in G$)
 satisfying
 $$\forall s\in G\quad \sum\nl_{\gamma\in \Gamma} |T(\gamma,s)   |<\infty \text{  and  }
 T ( U_G(s) ) = \sum\nl_{\gamma\in \Gamma} T(\gamma,s) U_\Gamma(\gamma).$$
 Let $\Theta : C^*(G)\to C^*(\Gamma) $ be defined by
 $$\Theta( U_G(s) ) = \sum\nl_{\gamma\in \Gamma, q(\gamma)=s} T(\gamma,s) U_\Gamma(\gamma).$$
 Then $\Theta \in D( C^*(G),C^*(\Gamma) )$ with $\|\Theta\|_{dec}
 \le \|T\|_{dec}$.
  Moreover,
 $  \Theta$ is c.p. if $T$ is c.p.. 
 \end{lem}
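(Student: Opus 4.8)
The plan is to realize $\Theta$ as a composition of completely positive (and, in general, merely decomposable) maps assembled from the machinery of Theorem~\ref{ppp}, so that both the estimate $\|\Theta\|_{dec}\le\|T\|_{dec}$ and the preservation of complete positivity follow formally from submultiplicativity of the dec-norm under composition together with \eqref{e8}. The only genuinely new ingredient is how to enforce the summation constraint $q(\gamma)=s$: I would record the input index $s$ and the image index $q(\gamma)$ in two separate copies of $M_G$ and then let the diagonal projection $P_G$ cut out exactly those terms for which these two agree.

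Concretely, since $q\colon\Gamma\to G$ is a group morphism, the assignment $\gamma\mapsto\lambda_G(q(\gamma))$ is a unitary representation of $\Gamma$ and hence extends to a $*$-homomorphism $\sigma\colon C^*(\Gamma)\to M_G$ with $\sigma(U_\Gamma(\gamma))=\lambda_G(q(\gamma))$. Writing $J_{\max}^\Gamma\colon C^*(\Gamma)\to C^*(\Gamma)\otimes_{\max}C^*(\Gamma)$ for the diagonal $*$-homomorphism of $\Gamma$, set $\Delta=(\sigma\otimes Id_{C^*(\Gamma)})J_{\max}^\Gamma$, so that $\Delta(U_\Gamma(\gamma))=\lambda_G(q(\gamma))\otimes U_\Gamma(\gamma)$. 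I then define
\[
\Theta=(u_1\otimes Id_{C^*(\Gamma)})(P_G\otimes Id_{C^*(\Gamma)})(Id_{M_G}\otimes\Delta)(\Upsilon_G\otimes T)J_{\max},
\]
where $u_1\colon C^*(G)\to\C$ is the $*$-homomorphism attached to the trivial representation and $J_{\max},\Upsilon_G,P_G$ are as in Theorem~\ref{ppp}.

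To check that this composition agrees with the stated formula, it suffices to evaluate it on a generator $U_G(s)$. Tracing through, $J_{\max}$ gives $U_G(s)\otimes U_G(s)$; applying $\Upsilon_G\otimes T$ yields $\sum_\gamma T(\gamma,s)\,\lambda_G(s)\otimes U_\Gamma(\gamma)$; applying $Id_{M_G}\otimes\Delta$ produces $\sum_\gamma T(\gamma,s)\,\lambda_G(s)\otimes\lambda_G(q(\gamma))\otimes U_\Gamma(\gamma)$. Now $P_G(\lambda_G(s)\otimes\lambda_G(q(\gamma)))=f_s^G(\lambda_G(q(\gamma)))\,U_G(s)$ by \eqref{06}, and $f_s^G(\lambda_G(q(\gamma)))$ equals $1$ when $q(\gamma)=s$ and $0$ otherwise; hence $P_G\otimes Id$ leaves exactly $\sum_{\gamma:\,q(\gamma)=s}T(\gamma,s)\,U_G(s)\otimes U_\Gamma(\gamma)$, and the final $u_1\otimes Id$ kills the redundant $U_G(s)$, giving $\Theta(U_G(s))=\sum_{\gamma:\,q(\gamma)=s}T(\gamma,s)U_\Gamma(\gamma)$ as required. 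The absolute summability hypothesis is used only to make sense of the stated formula; the composition itself is automatically bounded and decomposable.

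For the quantitative claim, every factor other than $\Upsilon_G\otimes T$ is either a $*$-homomorphism or the completely contractive c.p. map $P_G$, so each has dec-norm at most $1$; and $\|\Upsilon_G\otimes T\|_{dec}\le\|\Upsilon_G\|_{dec}\|T\|_{dec}=\|T\|_{dec}$ by \eqref{e8}. Submultiplicativity of the dec-norm under composition then gives $\|\Theta\|_{dec}\le\|T\|_{dec}$. If $T$ is c.p., every factor is c.p. (tensor products of c.p. maps are c.p. by the remark following \eqref{e8}, and $P_G$, $u_1$ and the $*$-homomorphisms are c.p.), so $\Theta$ is c.p. I do not expect any analytic obstacle here: the whole difficulty is the bookkeeping of tensor factors and the choice of $\sigma$, and once the two copies of $M_G$ carry $s$ and $q(\gamma)$ respectively, the projection $P_G$ does all the work.
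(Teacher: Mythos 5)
Your proposal is correct and is essentially the paper's own proof: the paper sets $v=(\hat q\otimes Id_{C^*(\Gamma)})J^\Gamma_{\max}T$ and then invokes Remark \ref{s5} (i.e.\ Proposition \ref{pd15}), whose internal construction is exactly your composition with $J_{\max}$, $u_\lambda=\Upsilon_G$, $P_G$ and $u_1$; since your $\sigma$ equals $\Upsilon_G\hat q$, your map $(Id_{M_G}\otimes\Delta)(\Upsilon_G\otimes T)$ coincides with the paper's middle factor $u_\lambda\otimes\bigl[(\sigma\otimes Id_{C^*(\Gamma)})J^\Gamma_{\max}T\bigr]$ after regrouping tensor legs. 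In short, you have merely inlined the machinery of Proposition \ref{pd15} and Remark \ref{s5} instead of citing it, and both arguments extract the constraint $q(\gamma)=s$ and the estimates in the same way, via Theorem \ref{ppp}, \eqref{06} and \eqref{e8}.
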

        
         \begin{proof}  
         Let $\hat q: C^*(\Gamma)\to C^*(G) $ denote the $*$-homomorphism 
         associated to  $q: \Gamma \to G$.
          Let $T_1= J_\Gamma T: C^*(G) \to  C^*(\Gamma)  \otimes_{\max} C^*(\Gamma)$, and $v= (\hat q \otimes Id_{C^*(\Gamma)}) T_1$.
 Clearly $v\in D(  C^*(G) , C^*(G)    \otimes_{\max} C^*(\Gamma)  )$
 with   $\|v\|_{dec}\le \|T\|_{dec}$.
 Let $\Psi = v^\bullet: G \to C^*(\Gamma)$ be the function defined
 in Remark \ref{s5} and let $  \Theta: C^*(G)\to C^*(\Gamma) $ be the linear map associated to  $\Psi $. Then the latter
 implies $\| \Theta\|_{dec}=\| \Psi\|_{D(G,C^*(\Gamma))}  \le \|T\|_{dec}$.
   \end{proof}
        
 \begin{rem} Let $J_{\max} : C^*(G) \to C^*(G)\otimes_{\max} C^*(G)$
  and $P_{\max} : C^*(G)\otimes_{\max} C^*(G)\to C^*(G) $ be as before.
 Let $\chi_G=\Upsilon_G \otimes \Upsilon_G : C^*(G)\otimes_{\max} C^*(G)  \to M_G\otimes_{\max} M_G$
 with $ \Upsilon_G $ as in \eqref{da06}.
    Then, recapitulating,  we have $$ P_{\max} = P_G \chi_G,\quad J_G= \chi_G J_{\max}   \ \text{    and    } \  P_{\max} J_{\max}=P_G J_G=Id_{C^*(G) }.$$
     \end{rem}

 \section{Interpolation}
 
 We start by an interpolation theorem that can be viewed as a non-commutative  
  Drury trick.
 
 \begin{thm}\label{dru} Let $\Lambda\subset G$ be a completely Sidon set with constant $C$. Let $w(\vp)=C^2/\vp$ for $\vp>0$.
  For any $0\le \vp\le 1$  there is 
  a function   $\psi_\vp\in B(G)$
 with 
 $\| \psi_\vp \|_{B(G)}\le w(\vp)$ such that
$\psi_\vp ( s)=1$ for any $s\in \Lambda$ and
$ |   \psi_\vp ( s)| \le   C^2\vp   $ for any $s\not \in \Lambda$.\\
More generally, for any  $0\le \vp\le 1$ and any function $z: \Lambda \to A$
with values in a unital $C^*$-algebra $A$ with $\sup\nl_\Lambda\| z \|< 1$
there is $\psi_{\vp,z}\in D(G,A)$ with 
 $\| \psi_{\vp,z} \|_{D(G,A)}\le w(\vp)$ such that
$${\psi_{\vp,z} }_{|\Lambda}= z  \text{  and  } 
 \sup\nl_{G\setminus\Lambda} \|   \psi_{\vp,z} \|_A  \le  C^2 \vp   .$$ 
 \end{thm}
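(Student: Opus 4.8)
The plan is to produce $\psi_{\vp,z}$ as the ``diagonal'' $v^\bullet$ of a decomposable multiplier $v$ on $C^*(G)$, engineered so that $v$ reproduces $\vp z$ on $\Lambda$ and is quadratically damped off $\Lambda$, and then to rescale by $\vp^{-1}$. The engine is Corollary~\ref{cc1}. With $\Gamma=\F_\Lambda$, take ${\cl U}\in D(C^*(G),M_\Gamma)$ from Remark~\ref{ko}, so that $\|{\cl U}\|_{dec}\le C$, ${\cl U}(U_G(t))=\lambda_\Gamma(g_t)$ for $t\in\Lambda$, and in general ${\cl U}(U_G(s))=\sum_\gamma c(\gamma,s)\lambda_\Gamma(\gamma)$ with $\sum_\gamma|c(\gamma,s)|^2\le C^2$ by \eqref{se1}. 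Forming $J=P_\Gamma({\cl U}\otimes{\cl U})J_{\max}$ as in the proof of Corollary~\ref{cc1}, Theorem~\ref{ppp} gives $J(U_G(s))=\sum_\gamma c(\gamma,s)^2\,U_\Gamma(\gamma)$ with $\|J\|_{dec}\le C^2$ and $J(U_G(t))=U_\Gamma(g_t)$ on $\Lambda$. The decisive gain from this squaring is that the coefficients are now absolutely summable, $\sum_\gamma|c(\gamma,s)^2|=\sum_\gamma|c(\gamma,s)|^2\le C^2$; this single $\ell_1$-bound will furnish the factor $C^2$ in the off-$\Lambda$ estimate.

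Next I would damp by word length. Since the length function on $\Gamma=\F_\Lambda$ is conditionally negative definite, Schoenberg's theorem makes $\gamma\mapsto\vp^{|\gamma|}$ positive definite for $0<\vp\le1$, so the multiplier $M_\vp\colon U_\Gamma(\gamma)\mapsto\vp^{|\gamma|}U_\Gamma(\gamma)$ is unital completely positive on $C^*(\Gamma)$ and $\|M_\vp\|_{dec}\le1$. Then $J_\vp=M_\vp J$ satisfies $\|J_\vp\|_{dec}\le C^2$ and $J_\vp(U_G(s))=\sum_\gamma\vp^{|\gamma|}c(\gamma,s)^2\,U_\Gamma(\gamma)$, whose scalar coefficients are again $\ell_1$-bounded by $C^2$. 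This meets the hypothesis of Lemma~\ref{s5b} for the canonical morphism $q\colon\Gamma\to G$, $q(g_t)=t$, which yields $\Theta\in D(C^*(G),C^*(\Gamma))$, $\|\Theta\|_{dec}\le C^2$, retaining only the words with $q(\gamma)=s$: $\Theta(U_G(s))=\sum_{q(\gamma)=s}\vp^{|\gamma|}c(\gamma,s)^2\,U_\Gamma(\gamma)$.

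To insert the target $z$ and return to $G$, compose with a completely positive contraction $R\colon C^*(\Gamma)\to C^*(G)\otimes_{\max}A$ with $R(U_\Gamma(g_t))=U_G(t)\otimes z(t)$: since $\sup_\Lambda\|z\|<1$, dilate each $z(t)$ to a unitary in $M_2(A)$, use the universal property of $C^*(\Gamma)$ to get a $*$-homomorphism into $C^*(G)\otimes_{\max}M_2(A)$, and compress to the $(1,1)$-corner; in the scalar case one simply takes $R=\hat q$. Then $v=R\,\Theta\in D(C^*(G),C^*(G)\otimes_{\max}A)$ has $\|v\|_{dec}\le C^2$, and since each retained $U_\Gamma(\gamma)$ is carried to $U_G(q(\gamma))\otimes(\cdots)=U_G(s)\otimes(\cdots)$, $v$ is a genuine $A$-valued multiplier. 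Extracting its diagonal by Remark~\ref{s5}/Proposition~\ref{pd15} gives $v^\bullet\in D(G,A)$ with $\|v^\bullet\|_{D(G,A)}\le C^2$, and I set $\psi_{\vp,z}=\vp^{-1}v^\bullet$, so $\|\psi_{\vp,z}\|_{D(G,A)}\le C^2/\vp=w(\vp)$.

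Computing $v^\bullet$ completes the proof and isolates the only hard point. For $s\in\Lambda$ the unique surviving word is $\gamma=g_s$, of length one, so $v^\bullet(s)=\vp z(s)$ and $\psi_{\vp,z}(s)=z(s)$ exactly. For $s\notin\Lambda$ all surviving $\gamma$ satisfy $q(\gamma)=s$; the length-one elements being the $g_t^{\pm1}$ with $q(g_t^{\pm1})=t^{\pm1}$, such $\gamma$ have $|\gamma|\ge2$ except when $s\in\Lambda^{-1}$, and in that generic case $\vp^{|\gamma|}\le\vp^2$ yields $\|v^\bullet(s)\|\le\vp^2\sum_\gamma|c(\gamma,s)|^2\le\vp^2C^2$, hence $\|\psi_{\vp,z}(s)\|\le C^2\vp$. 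I expect the main obstacle to be precisely the marriage of the two effects—the $\ell_1$-bound $C^2$ from squaring ${\cl U}$ and the quadratic damping $\vp^2$ forced by the $q$-filter—together with the length-one exception at $s\in\Lambda^{-1}$, where the surviving word $g_{s^{-1}}^{-1}$ contributes only at order $\vp$; disposing of that corner, by symmetrizing (reducing to $\Lambda=\Lambda^{-1}$) before running the construction, is the step that demands genuine care.
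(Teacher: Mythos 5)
Your construction is, up to repackaging, the paper's own proof: your $J={\cl U}\sharp\,{\cl U}$ is exactly the map $T$ of Lemma \ref{ld13a} (with the $\ell_1$-bound \eqref{eq99} coming from \eqref{se1}), your $q$-filter is Lemma \ref{s5b}, and inserting $z$ by dilating to unitaries in $M_2(A)$ and compressing is a harmless variant of Lemma \ref{c99}, which handles $\sup_\Lambda\|z\|<1$ via Remark \ref{dye}. The one point where you deviate is fatal as written: you damp with the raw Haagerup function $h_\vp(\gamma)=\vp^{|\gamma|}$, whereas the paper damps with the function $f_\vp$ of Theorem \ref{h}, i.e. $h_\vp$ averaged against the characters $\chi_z$, so that $\vp f_\vp(\gamma)=h_\vp(\gamma)\int_{\T}\bar z\, z^{n(\gamma)}\,dm(z)=h_\vp(\gamma)1_{n(\gamma)=1}$, where $n(\gamma)$ is the exponent sum. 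Without that averaging the off-$\Lambda$ estimate fails on \emph{two} exceptional sets, not one. Besides $\Lambda^{-1}\setminus\Lambda$, which you spotted (there $\gamma=g_{s^{-1}}^{-1}$ survives with $|\gamma|=1$, damping only $\vp$, so after dividing by $\vp$ your bound is $O(C^2)$, not $C^2\vp$), there is the point $s=1$ when $1\notin\Lambda$: the empty word has $q(e)=1$ and $|e|=0$, hence suffers no damping at all, and after rescaling your bound at $s=1$ is $C^2/\vp$ --- worse than trivial.

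Your proposed repair --- symmetrize, i.e. reduce to $\Lambda=\Lambda^{-1}$ --- is circular. To run the construction for $\Lambda\cup\Lambda^{-1}$ you need the lift ${\cl U}$ of Remark \ref{ko} for that set, i.e. you need $\Lambda\cup\Lambda^{-1}$ to be completely Sidon. The set $\Lambda^{-1}$ is indeed completely Sidon with the same constant (take adjoints in Definition \ref{d1}), but passing to the union of two completely Sidon sets is precisely Corollary \ref{druc}, which the paper deduces from the very theorem you are proving; no direct argument is available, and symmetrizing would in any case not cure the defect at $s=1$. The correct repair is the paper's averaging trick: replace $M_{h_\vp}$ by the multiplier of $\vp f_\vp=\int_\T \bar z\,(h_\vp\chi_z)\,dm(z)$. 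Each $h_\vp\chi_z$ is positive definite and equal to $1$ at $e$, so by Proposition \ref{pd16} and convexity this multiplier still has dec-norm $\le 1$, and it annihilates every word of exponent sum $\ne 1$, in particular $e$ and all the $g_t^{-1}$. With that single substitution your argument goes through verbatim: the only surviving length-one words are the generators themselves (whose $q$-image lies in $\Lambda$), every other surviving word has length $\ge 3$, and you even get the stronger bound $C^2\vp^2$ off $\Lambda$, consistent with Remark \ref{rd17}.
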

 \begin{proof}[Outline of proof]
 The first step is the special case when  $G=\F_\Lambda$
 for the set $\tilde\Lambda\subset \F_\Lambda$
 formed of the free generators indexed by  $\Lambda$ (see Lemma \ref{c99}).
 The second step (Lemma \ref{ld13a})
 establishes a strong link between the set $\Lambda$ and the set
 $\tilde\Lambda$.
 We will then complete the proof (after Remark \ref{ld13aa})
 by transplanting the case of  $\tilde\Lambda\subset \F_\Lambda$ to 
 that of  $ \Lambda\subset G$.
 \end{proof}
  \begin{rem} Note that when $A=B(H)$, if we settle for a weaker estimate, the first part implies
  the second one.
  Indeed, let $z: \Lambda \to B(H)$ with $\sup\nl_\Lambda\|z\|\le 1$
  and let $\varphi\in CB(G,B(H))$ with $\|  \varphi\|_{CB(G,B(H))}\le C$ extending $\z$
  as in Proposition \ref{pd1}.
  Then the function ${\psi}_{\vp,z} = \varphi \psi_{\vp}$
  satisfies ${\psi_{\vp,z} }_{|\Lambda}= z$,   
  $\|{\psi}_{\vp,z} \|_{D(G,B(H))} \le C w(\vp)$
  and   $
 \sup\nl_{G\setminus\Lambda} \|   {\psi}_{\vp,z} \|_{B(H)} \le  C^3 \vp   .$
   \end{rem}
 Using this statement, the following is immediate
 by well known arguments.
  \begin{cor}\label{druc}  
  The union of two completely Sidon sets  
  is completely Sidon.
 \end{cor}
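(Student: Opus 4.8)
The plan is to deduce this from the interpolation Theorem \ref{dru} together with the approximate-extension criterion of Lemma \ref{ld1}, following the pattern of Drury's classical union argument. Write $\Lambda=\Lambda_1\cup\Lambda_2$, where $\Lambda_1,\Lambda_2$ are completely Sidon with constants $C_1,C_2$, and set $w_i(\vp)=C_i^2/\vp$. Throughout I would work with $A=B(H)$, so that by \eqref{e1} the spaces $CB(G,B(H))$ and $D(G,B(H))$ coincide isometrically; this lets me freely pass between the $D$-norm estimates delivered by Theorem \ref{dru} and the $CB$-norm hypothesis demanded by Lemma \ref{ld1} and Proposition \ref{pd1}. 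My goal, by Proposition \ref{pd1}, is to produce for every $z:\Lambda\to B(H)$ with $\sup_\Lambda\|z\|\le 1$ a genuine extension $\varphi\in CB(G,B(H))$ of $z$ with uniformly controlled $CB$-norm, and by Lemma \ref{ld1} it suffices to produce an \emph{approximate} such extension.

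So, given $z$ with $\sup_\Lambda\|z\|\le 1$ (after a harmless scaling by $1-\eta$ I may assume the supremum is $<1$, so that Theorem \ref{dru} applies), I first apply Theorem \ref{dru} to $\Lambda_1$ and the restriction $z|_{\Lambda_1}$, obtaining $\psi_1\in D(G,B(H))$ with $\|\psi_1\|_{D(G,B(H))}\le w_1(\vp)$, with $\psi_1|_{\Lambda_1}=z|_{\Lambda_1}$ and $\sup_{G\setminus\Lambda_1}\|\psi_1\|\le C_1^2\vp$. I then form the residual $r=(z-\psi_1)|_{\Lambda_2}$, which vanishes on $\Lambda_1\cap\Lambda_2$ and satisfies $\|r\|\le 1+C_1^2\vp$ on $\Lambda_2\setminus\Lambda_1$. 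Applying Theorem \ref{dru} to $\Lambda_2$ and the normalized function $r/(1+C_1^2\vp)$, then rescaling, I obtain $\psi_2\in D(G,B(H))$ with $\psi_2|_{\Lambda_2}=r$, $\|\psi_2\|_{D(G,B(H))}\le (1+C_1^2\vp)\,w_2(\vp)$ and $\sup_{G\setminus\Lambda_2}\|\psi_2\|\le (1+C_1^2\vp)C_2^2\vp$.

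Now set $\varphi_0=\psi_1+\psi_2$. On $\Lambda_2$ the construction gives $\varphi_0=\psi_1+(z-\psi_1)=z$ exactly, while on $\Lambda_1$ one has $\psi_1=z$, and $\psi_2$ vanishes on $\Lambda_1\cap\Lambda_2$ and is bounded by $(1+C_1^2\vp)C_2^2\vp$ on $\Lambda_1\setminus\Lambda_2$; hence $\sup_\Lambda\|z-\varphi_0\|\le (1+C_1^2\vp)C_2^2\vp$, whereas $\|\varphi_0\|_{D(G,B(H))}\le w_1(\vp)+(1+C_1^2\vp)w_2(\vp)$. Choosing $\vp$ small enough that the error $(1+C_1^2\vp)C_2^2\vp$ is $<1$ and feeding these two bounds into Lemma \ref{ld1} yields, for every $z$ with $\sup_\Lambda\|z\|\le 1$, an exact extension $\varphi\in CB(G,B(H))$ with uniformly bounded $CB$-norm. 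By Proposition \ref{pd1} this says precisely that $\Lambda$ is completely Sidon.

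The only genuinely delicate point is bookkeeping rather than conceptual: after the first interpolation the residual $r$ on $\Lambda_2$ must be renormalized before Theorem \ref{dru} can be reapplied (its hypothesis demands supremum norm $<1$), and one must track how the factor $1+C_1^2\vp$ propagates into both the final norm bound and the final error; the boundary case $\sup_\Lambda\|z\|=1$ is absorbed by the preliminary scaling by $1-\eta$ and letting $\eta\to 0$. No idea beyond Theorem \ref{dru} and Lemma \ref{ld1} is required, which is why the corollary is ``immediate by well known arguments'', and an obvious induction upgrades the two-set statement to arbitrary finite unions.
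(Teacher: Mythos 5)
Your proof is correct and takes essentially the same route as the paper's: both arguments combine Theorem \ref{dru}, Lemma \ref{ld1} and Proposition \ref{pd1} to turn two interpolants into an approximate extension on the union and then upgrade it to an exact one. The only (cosmetic) difference is that the paper first replaces $\Lambda_1,\Lambda_2$ by disjoint sets and sums two interpolants of $z$ itself, each small off its own set, whereas you keep the sets as given and interpolate the renormalized residual $z-\psi_1$ on $\Lambda_2$; the bookkeeping differs slightly but the mechanism and the resulting constants are of the same kind.
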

 \begin{proof} Fix $0<\vp<1$.
 Let $\Lambda_1,\Lambda_2$ be  completely Sidon sets in $G$
 with respective constants $C_1,C_2$ and let $\Lambda=\Lambda_1\cup\Lambda_2$.
 We may and do assume $\Lambda_1,\Lambda_2$ disjoint.
  Let $z: \Lambda \to B(H)$
   with $\sup_\Lambda\|z\|\le 1$. 
   By Theorem \ref{dru}  (recalling \eqref{e1}) there are  
   $\varphi_j\in CB(G, B(H))$  
   with $\|\varphi_j  \|_{CB(G, B(H))} \le C_j^4/\vp $
   such that 
   $\varphi_j=z$ on $\Lambda_j$ and $\sup_{G\setminus\Lambda_j}\| \varphi_j \| \le \vp$ for both $j=1,2$.
   Then  $\varphi =\varphi_1 +\varphi_2$
   satisfies
   $\|\varphi\|_{CB(G, B(H))}  \le (C_1^4+C_2^4)/\vp$ 
and $\sup_{\Lambda} \|\varphi-z\|\le \vp$. By Proposition
   \ref{pd1} this shows that $\Lambda$
   is completely Sidon with constant $(C_1^4+C_2^4)/(\vp(1-\vp))$.
    \end{proof}
    \begin{rem}[Can the estimates be improved ?]\label{mela} 
    Actually as the proof below shows, we can use
    for $w$ any function $w$ such that Theorem \ref{dru} holds
    when $\Lambda=\{g_t\mid t\in \Lambda\}\subset \F_\Lambda$.
    Given the spectrum of the Haagerup multiplier $h_\vp(t)=\vp^{|t|}$
    appearing below    (that generalizes Riesz products to the non-commutative case) 
    we may apply an argument due to  M\'ela \cite[Lemme 3]{Mel}
    for which we refer for more details to \cite[Remark 1.16]{Pi3}
    that implies that 
Theorem \ref{dru} holds for  a better $w$, namely
for  $w(\vp) =  C^2 c_1\log( 2/\vp) $ for some numerical constant $c_1>0$ (instead of $w(\vp) = C^2/\vp$). 
In the preceding corollary, assuming $C=\max\{C_1,C_2\}$ large,
this leads to $\Lambda=\Lambda_1\cup\Lambda_2$ completely Sidon
with  a constant  $C(\Lambda)=O( C^2 \log C)$. 
This   same estimate has been known for  Sidon sets
since M\'ela's work. However, it seems to be still open whether
there is a better estimate than $O( C^2 \log C)$.
The same question arises of course for completely Sidon sets.
In particular, although unlikely to be true, it  seems  that an
   estimate $C(\Lambda)=O(C)$ is not ruled out.
    \end{rem}
    
    We will   use
  the following variant of Haagerup's well known theorem from \cite{Hainv}.
  This plays the role of the Riesz products used  in Drury's original argument (see Remark \ref{rie}).
 \begin{thm}\label{h} For any $0\le \vp\le 1$
 there is 
 a function $f_\vp:\F_\Lambda \to \C$ in $B( \F_\Lambda) $ with
 $\| f_\vp \|_{B( \F_\Lambda) } \le 1/\vp $
 such that 
 $$  \forall t\in \Lambda \quad f_\vp ( {g_t})=1\ 
 \text{ and }\ 
 \forall \gamma\not \in \{g_t\mid t\in \Lambda \} \quad \ | f_\vp(\gamma)|\le \vp.$$ 
  \end{thm}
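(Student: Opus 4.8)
The plan is to exhibit $f_\vp$ explicitly as a product of two functions in $B(\F_\Lambda)$, each of norm $1$: a \emph{length} factor coming from Haagerup's theorem, which localizes near the generators and produces the $\vp$-decay, and a \emph{direction} factor which selects the positive generators while annihilating the unit and the inverse generators. Precisely, let $\sigma\colon\F_\Lambda\to\Z$ be the homomorphism sending every generator $g_t$ to $1\in\Z$ (the total exponent), and let $|\gamma|$ denote the word length of $\gamma$ with respect to the generators $(g_t)$. I would set
\[
f_\vp(\gamma)=\frac1\vp\,\mathbf 1_{\{\sigma(\gamma)=1\}}\,\vp^{|\gamma|}.
\]

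To bound the norm, note first that by Haagerup's theorem in \cite{Hainv} the function $h_\vp(\gamma)=\vp^{|\gamma|}$ is positive definite on $\F_\Lambda$ (the word length is conditionally negative definite and $0\le\vp\le1$); since a positive definite function attains its $B(\F_\Lambda)$-norm at the unit, $\|h_\vp\|_{B(\F_\Lambda)}=h_\vp(e)=1$. Next, the indicator $\chi=\mathbf 1_{\{1\}}$ on $\Z$ lies in $B(\Z)$ with $\|\chi\|_{B(\Z)}=1$, since $\chi(n)=\int_\T z^{-n}\,d\mu(z)$ for the measure $d\mu(z)=z\,\tfrac{|dz|}{2\pi}$ of total variation $1$. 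Because composition with a group homomorphism does not increase the $B$-norm (a unitary representation of $\Z$ pulls back along $\sigma$ to one of $\F_\Lambda$), we get $\mathbf 1_{\{\sigma=1\}}=\chi\circ\sigma\in B(\F_\Lambda)$ with norm $\le1$. Since $B(\F_\Lambda)$ is a Banach algebra for the pointwise product (Remark \ref{pr11}), the product estimate yields
\[
\|f_\vp\|_{B(\F_\Lambda)}\le\frac1\vp\,\|\mathbf 1_{\{\sigma=1\}}\|_{B(\F_\Lambda)}\,\|h_\vp\|_{B(\F_\Lambda)}\le\frac1\vp.
\]

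It then remains only to read off the values. On a generator $g_t$ one has $\sigma=1$ and $|g_t|=1$, so $f_\vp(g_t)=\tfrac1\vp\cdot1\cdot\vp=1$. Every other element falls into one of two cases: if $\sigma(\gamma)\neq1$ — in particular for $\gamma=e$ (where $\sigma=0$) and for each inverse generator $g_t^{-1}$ (where $\sigma=-1$) — the indicator kills it and $f_\vp(\gamma)=0$; while if $\sigma(\gamma)=1$ but $\gamma$ is not a generator, then necessarily $|\gamma|\ge2$, whence $|f_\vp(\gamma)|=\vp^{|\gamma|-1}\le\vp$. This establishes all three requirements, and matches the ``Haagerup multiplier $\vp^{|t|}$'' generalizing Riesz products mentioned in Remark \ref{mela}.

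The only delicate point is conceptual rather than computational, and it is where the construction must be pinned down. Because $f_\vp(g_t)=1$ while $|f_\vp(g_t^{-1})|\le\vp$, the function cannot be a scalar multiple of a positive definite (indeed of any self-adjoint) function; and with infinitely many generators one cannot realize ``value $1$ on all $g_t$'' by a single fixed-vector coefficient of the regular representation, since such coefficients lie in the Fourier algebra and cannot remain bounded. The directional factor $\mathbf 1_{\{\sigma=1\}}$ resolves both difficulties simultaneously: passing to the abelianized exponent $\sigma$ folds all generators onto the single frequency $1\in\Z$, so they can be treated uniformly (the Riesz-product / spectral-projection idea), while at the same time breaking the symmetry between $g_t$ and $g_t^{-1}$ and eliminating the unit. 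Checking that this factor genuinely belongs to $B(\F_\Lambda)$ with norm $1$, and that Haagerup's length factor supplies exactly the $\vp$-decay needed off the generators, is the heart of the matter; the rest is the Banach-algebra product bound.
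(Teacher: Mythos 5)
Your proof is correct and is essentially the paper's own argument: the paper defines the very same function, writing your factor $\mathbf 1_{\{\sigma=1\}}$ as the character average $\int_\T \bar z\,\chi_z(\cdot)\,dm(z)$ (bounded in $B(\F_\Lambda)$ by Jensen) and multiplying by Haagerup's $h_\vp$ exactly as you do. Your reformulation via $\mathbf 1_{\{1\}}\in B(\Z)$ pulled back along $\sigma$ is only a cosmetic repackaging of that same step.
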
 
 \begin{proof}
  Haagerup's theorem produces 
 a unital c.p. map associated to the multiplier operator 
 for the function $h_\vp : t\mapsto  \vp^{|t|}$.
 the latter is in $B(\F_\Lambda)$ with norm 1.
  For any fixed $z\in \T$, let $\chi_z(t)= z^{n(t)}$ ($n(t)\in \Z$)
  where $t\mapsto z^{n(t)}$ is the group morphism
  on $\F_\Lambda$ taking all the generators to $z$
  (and hence their inverses to $z^{-1}$).
  Clearly $\chi_z$ has norm 1 in $B(\F_\Lambda)$.
  Therefore the function
  $$f_\vp (t) =(1/\vp) h_\vp(t) \int  \bar z \chi_z(t)  dm(z),$$
  where $m$ is normalized Haar measure on $\T$,
  satisfies by Jensen $\|f_\vp (t)\|_{B(\F_\Lambda)}\le 1/\vp$,
  $f_\vp (1)=f_\vp ({g^{-1}_t})=0   $, 
    $f_\vp (g _t)=1$ and 
    $|f_\vp (t)|\le \vp$ whenever $|t|>1$.
 All the announced properties are now easy to check.
   \end{proof}
 
   \begin{lem}\label{c99} The set
   $\tilde \Lambda =\{ g_t\mid t\in \Lambda\} \subset \F_\Lambda$
 satisfies the properties in Theorem \ref{dru} with $C=1$.
   \end{lem}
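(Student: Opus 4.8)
The plan is to deduce both assertions of Theorem \ref{dru} (in the case $G=\F_\Lambda$, with $\Lambda$ replaced by $\tilde\Lambda$ and $C=1$) from the Haagerup function $f_\vp$ of Theorem \ref{h}, combined with the multiplicative structure recorded in Remark \ref{pr11}. The scalar statement is immediate: the function $\psi_\vp:=f_\vp$ already satisfies $\|f_\vp\|_{B(\F_\Lambda)}\le 1/\vp=w(\vp)$, $f_\vp(g_t)=1$ for $t\in\Lambda$, and $|f_\vp(\gamma)|\le\vp$ for $\gamma\notin\tilde\Lambda$. Hence the real work is the operator-valued part, and the idea is to produce $\psi_{\vp,z}$ as a pointwise product $\Phi\cdot f_\vp$, where $\Phi:\F_\Lambda\to A$ is a single (independent of $\vp$) completely positive definite extension of $z$ that is bounded by $1$ in norm.

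To build $\Phi$ I would exploit freeness together with the Sz.-Nagy $2\times2$ unitary dilation. Writing $a_t:=z(g_t)\in A$ with $\|a_t\|\le1$, set
$$U_t=\begin{pmatrix} a_t & (1-a_ta_t^*)^{1/2}\\ (1-a_t^*a_t)^{1/2} & -a_t^*\end{pmatrix}\in M_2(A),$$
which is a unitary in $M_2(A)$ by the usual dilation computation, valid because the functional calculus of $a_ta_t^*$ and $a_t^*a_t$ stays inside the unital $C^*$-algebra $A$. Since $\F_\Lambda$ is free, the assignment $U_{\F_\Lambda}(g_t)\mapsto U_t$ extends to a unital $*$-homomorphism $\pi:C^*(\F_\Lambda)\to M_2(A)$. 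Composing with the corner compression $P:M_2(A)\to A$, $x\mapsto x_{11}$ (a unital c.p. map), I obtain a unital c.p. map $u_\Phi=P\pi:C^*(\F_\Lambda)\to A$, whose associated function $\Phi(\gamma)=(\pi(U_{\F_\Lambda}(\gamma)))_{11}$ is completely positive definite, takes values in $A$, and satisfies $\Phi(g_t)=a_t=z(g_t)$. Being c.p. and unital, $\Phi$ obeys $\|\Phi\|_{D(\F_\Lambda,A)}=\|\Phi(1)\|=1$, and complete positive definiteness forces $\|\Phi(\gamma)\|_A\le1$ for every $\gamma$.

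With $\Phi$ in hand I would set $\psi_{\vp,z}:=\Phi\cdot f_\vp$. By Remark \ref{pr11}, applied with $A_1=A$ and $A_2=\C$ (so that $A\otimes_{\max}\C=A$ and the tensor function is the pointwise product), this lies in $D(\F_\Lambda,A)$ with $\|\psi_{\vp,z}\|_{D(\F_\Lambda,A)}\le\|\Phi\|_{D(\F_\Lambda,A)}\,\|f_\vp\|_{B(\F_\Lambda)}\le 1/\vp=w(\vp)$. On the generators $\psi_{\vp,z}(g_t)=\Phi(g_t)f_\vp(g_t)=a_t=z(g_t)$, while for $\gamma\notin\tilde\Lambda$ the bounds $\|\Phi(\gamma)\|_A\le1$ and $|f_\vp(\gamma)|\le\vp$ give $\|\psi_{\vp,z}(\gamma)\|_A\le\vp$, as required (recall $C=1$). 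The only point that genuinely needs care — and the step I expect to be the main obstacle — is ensuring that the dilation-based extension $\Phi$ is valued in $A$ rather than merely in some $B(H)\supseteq A$; this is precisely what the matrix formulation over $M_2(A)$ achieves, since both the dilating unitaries $U_t$ and the compression $P$ live over $A$, so no appeal to an abstract Stinespring dilation (which would land in $B(H)$) is needed.
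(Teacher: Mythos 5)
Your proof is correct, and its engine is the same as the paper's: extend $z$ to all of $\F_\Lambda$ using freeness, multiply pointwise by Haagerup's function $f_\vp$ from Theorem \ref{h}, and control the $D(\F_\Lambda,A)$-norm by multiplicativity of the dec-norm under pointwise products (Remark \ref{pr11}, or equivalently composing with the multiplier $M_{f_\vp}$ of Proposition \ref{pd16}). Where you genuinely differ is the treatment of non-unitary values. The paper first assumes $z$ takes \emph{unitary} values in $A$, extends it by freeness to a unitary representation $\pi_z:\F_\Lambda\to U(A)$, and bounds $\|u_{\pi_z}M_{f_\vp}\|_{dec}\le 1/\vp$; the case $\sup\nl_\Lambda\|z\|<1$ is then reduced to the unitary case by the Kadison--Pedersen averaging theorem (Remark \ref{dye}). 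You instead handle all contractions at once via the Halmos $2\times 2$ unitary dilation inside $M_2(A)$ followed by compression to the $(1,1)$ corner, producing a unital c.p. extension $\Phi$ of $z$ with $\|\Phi\|_{D(\F_\Lambda,A)}=\|\Phi(1)\|=1$, and only then multiply by $f_\vp$. Your technical points are sound: the intertwining identity $a(1-a^*a)^{1/2}=(1-aa^*)^{1/2}a$ makes each $U_t$ unitary in $M_2(A)$, the universal property of $C^*(\F_\Lambda)$ applies to unitaries in any unital $C^*$-algebra (so the extension really lands in $M_2(A)$, resolving the concern you flagged), and $\|\Phi(\gamma)\|\le 1$ holds simply because $\Phi(\gamma)$ is a corner of a unitary. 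Your route buys a little more than the paper's: it covers the closed ball $\sup\nl_\Lambda\|z\|\le 1$ rather than the open one, avoids the averaging step entirely, and in passing re-proves the operator-valued extension property of free sets (Bo\.zejko's result quoted in Remark \ref{boz} and invoked again in the proof of Lemma \ref{key}). The paper's route is marginally more elementary in the unitary case, needing no functional calculus, and isolates the contraction case as a soft perturbation argument.
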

    \begin{proof} Let $z: \Lambda \to U(A)$.   
There is a unitary representation $\pi_z: \F_\Lambda \to U(A)$
such that $\pi_z(g_t)= z(t)$ for any $t\in \Lambda$.
Let $\psi_{\vp,z}(t)= f_\vp (t) \pi_z(t)$ (i.e. the pointwise product).
Then $\psi_{\vp,z}$ extends $z$, $\|\psi_{\vp,z}(t)\|\le \vp$ if $t\not\in \Lambda$
 and we claim that $\|\psi_{\vp,z}\|_{{D(\F_\Lambda,A)}} \le 1/\vp $.
Indeed, 
let $u_{\pi_z}: C^*(\F_\Lambda) \to A$  be the associated $*$-homomorphism.
Clearly
 $\|u_{\pi_z} \|_{dec}=1$
(see the proof of Proposition \ref{pd13}).
Let $M_{f_\vp}: C^*(\F_\Lambda) \to C^*(\F_\Lambda)$
be the multiplier by $f_\vp$. Then $\|M_{f_\vp}\|_{dec}\le 1/\vp$ (see Proposition \ref{pd16}).
Therefore $u_{\pi_z}M_{f_\vp} \in D(C^*(\F_\Lambda), C^*(\F_\Lambda))$
with $\|u_{\pi_z}M_{f_\vp}\|_{dec} \le 1/\vp$.
Since $u_{\pi_z}M_{f_\vp}$ is the linear map associated to the function
$\psi_{\vp,z}$ the claim follows.
    This
completes the proof in case $z$ takes its values in $U(A)$.
Using Remark \ref{dye}
one easily extends this to the case when $\sup\nl_{\Lambda}\|z  \|<1$.
    \end{proof}

  Let $\Gamma$ be another discrete group.
  
Let $T_1,T_2\in D( C^*(G), M_\Gamma)$.

 Let $T_1 \sharp T_2 : \C[G] \to \ell_1(\Gamma) $
 be defined by
 $$[T_1 \sharp T_2 ](\d_s)= \sum\nl_ {{\gamma}\in \Gamma}  T_1({\gamma},s)T_2({\gamma},s)\ e_{\gamma} ,$$
 where
 $(\d_s)$ is the natural basis of $\C[G]$ and
  $(e_{\gamma})$  the canonical basis of $\ell_1(\Gamma)$.
 Note that by \eqref{se1} the last sum is absolutely convergent.
Since $\ell_1(\Gamma)\subset C^*(\Gamma)$ (in the usual way)
 we may view $T_1 \sharp T_2$ as a map
with values in $C^*(\Gamma)$. 
Then we set equivalently  
  \begin{equation}\label{se1b} [T_1 \sharp T_2 ](\d_s)= \sum\nl_ {{\gamma}\in \Gamma}  T_1({\gamma},s)T_2({\gamma},s) U_{\Gamma}({\gamma}).\end{equation}

\begin{pro}\label{s1} For any $T_1,T_2\in D( C^*(G), M_\Gamma)$,
the mapping $T_1 \sharp T_2$ extends
to a decomposable map still denoted (abusively) by  $T_1 \sharp T_2$
in $D(   C^*(G)  , C^*(\Gamma)    )$ such that

  \begin{equation}\label{se1c} \|   T_1 \sharp T_2\|_{dec}\le \|   T_1  \|_{dec}\|     T_2\|_{dec}.\end{equation}
 \end{pro}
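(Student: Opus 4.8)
The plan is to recognize the $\sharp$-product as a factorization through the diagonal embedding and the Fell-absorption projection of Theorem \ref{ppp}, applied with $\Gamma$ in place of $G$. The crucial observation is that pointwise multiplication of the matrix coefficients $T_1(\gamma,s)T_2(\gamma,s)$ is exactly the diagonal that the conditional-expectation-type map $P_\Gamma$ extracts from a tensor product: Theorem \ref{ppp} gives $P_\Gamma(a\otimes b)=\sum_{\gamma}a(\gamma)b(\gamma)U_\Gamma(\gamma)$ for \emph{all} $a,b\in M_\Gamma$, and the slices of $T_i(U_G(s))$ are precisely the $T_i(\gamma,s)$ of \eqref{se1a}.

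Concretely, I would set $\Phi=P_\Gamma\,(T_1\otimes T_2)\,J_{\max}$, where $J_{\max}\colon C^*(G)\to C^*(G)\otimes_{\max}C^*(G)$ is the diagonal $*$-homomorphism with $J_{\max}(U_G(s))=U_G(s)\otimes U_G(s)$ and $P_\Gamma\colon M_\Gamma\otimes_{\max}M_\Gamma\to C^*(\Gamma)$ is the map of Theorem \ref{ppp}. First I would check that $\Phi$ agrees with $T_1\sharp T_2$ on the generators. Applying $T_1\otimes T_2$ to $J_{\max}(U_G(s))=U_G(s)\otimes U_G(s)$ produces the elementary tensor $T_1(U_G(s))\otimes T_2(U_G(s))\in M_\Gamma\otimes_{\max}M_\Gamma$, whose slices in the sense of Theorem \ref{ppp} are $a(\gamma)=T_1(\gamma,s)$ and $b(\gamma)=T_2(\gamma,s)$. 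The explicit $P_\Gamma$ formula then yields $\Phi(U_G(s))=\sum_{\gamma} T_1(\gamma,s)T_2(\gamma,s)\,U_\Gamma(\gamma)$, which is exactly $[T_1\sharp T_2](\d_s)$ of \eqref{se1b}; the series converges absolutely by the Cauchy--Schwarz estimate flowing from \eqref{se1}. Hence $\Phi$ is a genuine extension of $T_1\sharp T_2$ to $C^*(G)$.

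For the norm bound I would chain the three factors. The map $J_{\max}$ is a unital $*$-homomorphism, hence completely positive and contractive, so $\|J_{\max}\|_{dec}=1$; by Theorem \ref{ppp} the map $P_\Gamma$ is c.p.\ and completely contractive, whence $\|P_\Gamma\|_{dec}\le 1$; and by the multiplicativity of the dec-norm under maximal tensor products \eqref{e8}, $\|T_1\otimes T_2\|_{dec}\le\|T_1\|_{dec}\|T_2\|_{dec}$. Composing and using submultiplicativity of the dec-norm gives $\|\Phi\|_{dec}\le\|T_1\|_{dec}\|T_2\|_{dec}$, which is \eqref{se1c}.

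The only point requiring care---rather than a genuine obstacle---is checking that the $P_\Gamma$ formula of Theorem \ref{ppp} applies to the elementary tensor $T_1(U_G(s))\otimes T_2(U_G(s))$ with \emph{both} factors taken in the von Neumann algebra $M_\Gamma$ and not merely in a weakly dense subalgebra; this is exactly the content of the last assertion of Theorem \ref{ppp}, so no separate approximation step is needed. Once the identity $T_1\sharp T_2=P_\Gamma\,(T_1\otimes T_2)\,J_{\max}$ is in hand, the estimate is immediate, so the substantive work lies entirely in spotting this decomposition.
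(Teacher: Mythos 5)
Your proof is correct and is essentially the paper's own argument: the paper proves the proposition in one line by observing $(T_1 \sharp T_2) = P_\Gamma (T_1\otimes T_2) J_G$ and invoking \eqref{e8}, exactly the factorization you found. (In fact the paper writes $J_G$ where the composition only typechecks with the diagonal $*$-homomorphism $J_{\max}$, since $T_1\otimes T_2$ is defined on $C^*(G)\otimes_{\max}C^*(G)$ rather than on $M_G\otimes_{\max}M_G$, so your version with $J_{\max}$, together with the dec-norm bookkeeping and the use of the explicit formula for $P_\Gamma$ on $M_\Gamma$, is precisely the intended proof.)
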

 \begin{proof}
Just observe
 $$        (T_1 \sharp T_2 )= P_\Gamma (T_1 \otimes T_2) J_G : C^*(G) \to   C^*(\Gamma),$$
 and use \eqref{e8}.
  \end{proof}
  
  \begin{rem}\label{s2} Assume that there is a morphism $q: \Gamma \to G$
  onto $G$ so that $G$ is a quotient of $\Gamma$.
  Let  $\hat{q}: C^*(\Gamma) \to C^*(G)$ be defined by
  $$\hat{q}( U_\Gamma({\gamma}) ) = U_G({q({\gamma})}) .$$
  Then $\hat{q}$ is a 
  $*$-homomorphism. A fortiori
  it is a c.p. contractive mapping and hence $\|\hat{q}\|_{dec}=1$.
  
   Let $T \in D(   C^*(G)  , C^*(\Gamma)    )$  such that
   $T(U_G (s)) =\sum\nl_{{\gamma}\in \Gamma} T({\gamma},s) U_\Gamma ({\gamma})$ with 
   $\sum\nl_{{\gamma}\in \Gamma} |T({\gamma},s)|<\infty$ for all $s\in G$. 
  Let $  v= \hat{q}T:  C^*(G) \to C^*(G) $. Note that
  $  v(U_G (s))= \sum\nl_{s'\in G} \sum\nl_{ {\gamma}\in \Gamma, q({\gamma})=s' } T({\gamma},s)  U_G(s'),$
  and hence $$T_v(s',s)=   \sum\nl_{ {\gamma}\in \Gamma, q({\gamma})=s' } T({\gamma},s)$$ 
  and
   $\|  v:  C^*(G) \to C^*(G)\|_{dec}  \le \| T:  C^*(G) \to C^*(\Gamma)\|_{dec} .$
 By Lemma \ref{s4} we have
 \begin{equation}\label{pp1} \|   v^\bullet  \|_{B(G)} \le \|   T  \|_{dec},\end{equation}
 and
 \begin{equation}\label{pp2} v^\bullet(s)= \sum\nl_{ {\gamma}\in \Gamma, q({\gamma})=s} T({\gamma},s).\end{equation}
    \end{rem}
    
    This brings us to the second step of the proof of Theorem \ref{dru}, as follows:
    
    \begin{lem}\label{ld13a} Let $\Lambda\subset G$  be 
 a subset generating $G$.
Let $\Gamma=\F_\Lambda$. Let  $q: \Gamma \to G$ 
be the  quotient
morphism  taking $g_t$ to $t$. If $\Lambda\subset G$ is completely Sidon with
constant $C$, there is   a scalar ``matrix"  $T(\gamma ,s)$
such that    
\begin{equation}
\label{eq99}
\sup\nl_{s\in G}\sum\nl_{\gamma\in \Gamma} |T(\gamma,s)|\le C^2
,\end{equation}
and such that the corresponding operator
$T:  C^*(G) \to  C^*(\Gamma)$ satisfies
$$\forall t\in \Lambda\quad T( U_G(t)) =U_ \Gamma(g_t).$$
Moreover, the map $\Theta: C^*(G) \to C^*(\Gamma)$ defined by
$$ \Theta( U_G(s) ) = \sum\nl_{\gamma\in \Gamma, q(\gamma)=s} T(\gamma,s) U_\Gamma(\gamma)$$
 is in $ D( C^*(G),C^*(\Gamma) )$ with $\|\Theta\|_{dec}
 \le C^2$.
 \end{lem}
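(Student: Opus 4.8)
The plan is to produce $T$ as the entrywise product of a single decomposable map with itself, exploiting that squaring a Hilbert–Schmidt matrix produces an $\ell_1$ matrix while leaving the entries $0$ and $1$ untouched. Since $\Lambda$ is completely Sidon with constant $C$, Remark~\ref{ko} furnishes a map $\cl U\in D(C^*(G),M_\Gamma)$ with $\|\cl U\|_{dec}\le C$ and $\cl U(U_G(t))=\lambda_\Gamma(g_t)$ for every $t\in\Lambda$. I would then simply set $T=\cl U\sharp\cl U$, the diagonal product of Proposition~\ref{s1}, and read off all three assertions from it.

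Writing $\cl U(\gamma,s)=f_\gamma^\Gamma(\cl U(U_G(s)))$ for the matrix of $\cl U$ as in \eqref{se1a}, the definition \eqref{se1b} of the $\sharp$-product gives $T(\gamma,s)=\cl U(\gamma,s)^2$. Three short checks then complete the first part of the statement. First, the $\ell_1$-bound \eqref{eq99}: by \eqref{se1} one has $\sum_\gamma|\cl U(\gamma,s)|^2\le\|\cl U\|^2\le\|\cl U\|_{dec}^2\le C^2$ uniformly in $s$, and $|T(\gamma,s)|=|\cl U(\gamma,s)|^2$, so $\sup_s\sum_\gamma|T(\gamma,s)|\le C^2$. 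Second, the interpolation on $\Lambda$: for $t\in\Lambda$ biorthogonality of $(f_\gamma^\Gamma)$ to $(\lambda_\Gamma(\gamma))$ gives $\cl U(\gamma,t)=\delta_{\gamma,g_t}$, whence $T(U_G(t))=\sum_\gamma\cl U(\gamma,t)^2U_\Gamma(\gamma)=U_\Gamma(g_t)$ because $1^2=1$. Third, the dec-norm: Proposition~\ref{s1} yields $T\in D(C^*(G),C^*(\Gamma))$ with $\|T\|_{dec}\le\|\cl U\|_{dec}^2\le C^2$.

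For the assertion about $\Theta$, I would invoke Lemma~\ref{s5b} directly. The map $T$ just built has exactly the required form: an absolutely convergent scalar expansion $T(U_G(s))=\sum_\gamma T(\gamma,s)U_\Gamma(\gamma)$ with $\sum_\gamma|T(\gamma,s)|<\infty$ (guaranteed by the bound above), and $q:\Gamma\to G$ is the prescribed quotient morphism. Hence the restriction to the fibers, $\Theta(U_G(s))=\sum_{\gamma:q(\gamma)=s}T(\gamma,s)U_\Gamma(\gamma)$, lies in $D(C^*(G),C^*(\Gamma))$ with $\|\Theta\|_{dec}\le\|T\|_{dec}\le C^2$, which is the last claim.

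There is essentially no obstacle left once the machinery is in place; the only genuinely clever point—and what I regard as the heart of this step—is the choice $T_1=T_2=\cl U$, which converts the Hilbert–Schmidt control of \eqref{se1} into honest $\ell_1$ control by squaring, while preserving the entries equal to $1$ that encode the interpolation condition on $\Lambda$. All the substantive work has been absorbed into Proposition~\ref{s1} (decomposability and submultiplicativity of $\sharp$) and Lemma~\ref{s5b} (stability of decomposability under restriction to the fibers of $q$), so the proof reduces to assembling these two facts around the single map $\cl U$.
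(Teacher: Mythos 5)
Your proposal is correct and follows exactly the paper's own argument: apply Remark~\ref{ko} to get $\cl U\in D(C^*(G),M_\Gamma)$ with $\|\cl U\|_{dec}\le C$, set $T=\cl U\sharp\cl U$, deduce \eqref{eq99} from \eqref{se1} and \eqref{se1b}, the dec-bound from \eqref{se1c}, and the statement about $\Theta$ from Lemma~\ref{s5b}. The only difference is that you spell out the three verifications (squared entries, biorthogonality on $\Lambda$, norm estimates) that the paper leaves implicit.
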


\begin{proof} By Remark \ref{ko}, there is a map $\cl U: C^*(G) \to M_{\Gamma} $
with $\|\cl U\|_{dec}\le C$
such that
$ \cl U(U_G(t)) = \lambda_\Gamma(g_t)$ for all $t\in \Lambda$.
Now let
$T =   \cl U \sharp \cl U $. Then
\eqref{eq99} follows by \eqref{se1} and \eqref{se1b}.
By \eqref{se1c} $\|T\|_{dec}\le C^2$.
The second part then follows from Lemma \ref{s5b}.
 \end{proof}

 \begin{rem} By Remark \ref{07} using $\ovl{\cl U} \sharp \cl U $ we can in addition
 obtain $T({\gamma},s) \ge 0$ for all ${\gamma},s$.  
 \end{rem}
 
 \begin{rem}\label{ld13aa}
Let $\Psi: G \to C^*(\Gamma)$ be the function 
associated  to $\Theta$, i.e.
 $$\forall s\in G\quad \Psi(s)=\sum\nl_{q(\gamma)  = s}  T({\gamma},s) U_\Gamma ({\gamma}).$$
 Then $\Psi\in D(G, C^*(\Gamma))$ with $\| \Psi\|_{D(G, C^*(\Gamma))} \le C^2$
 and $\Psi(t)=U_\Gamma (g_t)$ for any $t\in \Lambda$.
 \end{rem}

 \begin{proof}[Proof of Theorem \ref{dru}]
We may assume w.l.o.g. that $G$ is the group generated by $\Lambda$.
We apply Lemma  \ref{ld13a} and \eqref{eq99} to transplant 
the result of Lemma \ref{c99} from $\F_\Lambda$ to $G$. Recall
 $\Gamma=\F_\Lambda$. Fix $0\le \vp<1$.
 Let $z: \Lambda \to A$ such that
 $\sup\nl_{\Lambda}\|z  \|<1$.
 Let $z' :  \tilde \Lambda \to A$ be the transplanted copy of $z$ defined by
 $z'(g_t)=z(t)$ for any $t\in  \Lambda$. Of course $\sup\nl_{\tilde\Lambda}\|z'  \|<1$.
 By Lemma \ref{c99} there is  $\psi'_{\vp,z}: \Gamma \to A$
 with $\| \psi'_{\vp,z}\|_{D(\Gamma , A) }  \le 1/\vp$ 
 extending $z'$ and such that $\| \psi'_{\vp,z} (\gamma)\| \le \vp$
 if $\gamma \not \in  {\tilde\Lambda}$.
 Let $u_{\vp,z}: C^*(\Gamma)  \to A$ be the linear map associated to  $ \psi'_{\vp,z} $
 (i.e. $u_{\vp,z}$ is $u_{ \psi'_{\vp,z} }$ is the sense of \eqref{97}).
 Let  $\Psi$ be associated to
 $\Theta: C^*(G)\to C^*(\Gamma) $ 
 as in Remark  \ref{ld13aa}   so that $\|\Psi\|_{D( G,C^*(\Gamma) )}
 =\|\Theta\|_{D( C^*(G),C^*(\Gamma) )}$.
 We then set
 \begin{equation}\label{98} \psi_{\vp,z}= u_{\vp,z}(  \Psi ),\end{equation}
 so that $u_{\vp,z}\Theta$ is the linear map
 associated to $\psi_{\vp,z} $. Thus
 $$\|  \psi_{\vp,z} \|_{D( G,A )} \le \|\Theta\|_{D( C^*(G),C^*(\Gamma) )} \|u_{\vp,z}\|_{D( C^*(\Gamma) , A)}=
  \|\Psi\|_{D( G,C^*(\Gamma) )} \|\psi'_{\vp,z}\|_{D(  \Gamma , A)}
  \le C^2/\vp.$$
 Equivalently \eqref{98} means that  for any $s\in G$ we have
 $$ \psi_{\vp,z}(s)=  \sum\nl_{q(\gamma)  = s}  T({\gamma},s) \psi'_{\vp,z} ({\gamma}).$$
  Observe that if
$s\not \in \Lambda$ and $q(\gamma)=s$ then necessarily 
$\gamma\not\in \{g_t\mid t\in \Lambda\}$ and hence \eqref{eq99}
gives us
$\|\psi_{\vp,z}(s) \|\le C^2\vp$.
Moreover for any $t\in \Lambda$ we have
$ \psi_{\vp,z}(t)=  u_{\vp,z}(  \Psi (t))=\psi'_{\vp,z}( g_t)=z'(g_t)=z(t)$.
So the second (and more general) part of Theorem \ref{dru} follows.
  \end{proof}
  \begin{rem}\label{rd17} Let $|s|_{\Lambda}$ denote the length of an element
  $s\in G$ with respect to the generating set $\Lambda$, i.e.
  $|s|_{\Lambda}=\inf\{|t|\mid t\in \F_\Lambda,\   q(t)=s\}$. 
  In the preceding proof
  we find $$
  |\psi_{\vp}(s)| \le C^2 \vp^{|s|_{\Lambda}-1}\text{  and  }
  \|\psi_{\vp,z}(s)\| \le C^2 \vp^{|s|_{\Lambda}-1}.$$.
   \end{rem}
    \begin{rem}\label{rie}  If one replaces the free group by the free Abelian group
   $\Gamma^a=\Z^{(\Lambda)}$ the proof becomes quite similar to Drury's original one,
   but reformulated in operator theoretic terms.
   The group $\Gamma^a$ is generated by generators $(g^a_t)_{t\in \Lambda}$
   that are free except  that they mutually commute.
   In this case $M_{\Gamma^a}$ is an injective von Neumann algebra.
   Thus we have a mapping $v \in D( C^*(G) ,M_{\Gamma^a})$ as in Corollary
   \ref{cc1} where now the $g_t$'s  are
   replaced by  the generators $g^a_t$ of $\Gamma^a$.
   When the group $G$ is Abelian
   we again have a quotient map $q^a: \Gamma^a \to G$ such that
   $q^a( g^a_t  )=t$ for all ${t\in \Lambda}$.
   The analogue of $f_\vp$ is then the Fourier transform of a probability
   measure on the compact group $\hat G=\T^\Lambda$, namely the  Riesz product 
   $\prod_{t\in \Lambda} (1+ \vp (z_t + \bar z_t)) $
   where $z_t: \T^\Lambda \to \T$ is the $t$-th coordinate. This is defined
   only for $|\vp|\le 1/2$ but one can use equally well whenever $|\vp|\le 1$
   the Riesz product based on the Poisson kernel:
   $$\prod\nl_{t\in \Lambda} (\sum\nl_{n\in \Z }   \vp^{|n|} z^n_t  ) .$$
   Its Fourier transform is the exact analogue of $f_\vp$ on $\Gamma^a$.\\
   See \cite[chap. 7]{McG} and \cite[chap. V]{HMP} for more  on Riesz products and their
   generalizations. 
   \\ See 
   \cite{Boz1,Boz2, DeM,FiPi} for  generalizations of Haagerup's result
  (concerning the function $h_\vp$)  to free products of groups 
   and \cite{Boc} for free products of c.p. maps on $C^*$-algebras.
   \end{rem}

    \section{Fatou-Zygmund property}
    We now turn to the  Fatou-Zygmund  (FZ in short) property.
    Recall  $P(G)$ is the set of positive definite complex valued functions on $G$.
    The multiplier operator $M_f$
    associated to a  function $f\in B(G)$ is c.p. on $C^*(G)$
    iff $f\in P(G)$
    and  we have $\|M_f\|=\|M_f\|_{dec}= f(1)$ for any $f\in P(G)$.
    
    \begin{thm}\label{tfz} Let $\Lambda\subset G\setminus\{1\}$ be a symmetric completely
 Sidon set.  
   Any bounded Hermitian function $\varphi: \Lambda \to \C$
   admits an extension $\tilde \varphi \in P(G)$.
   More generally, there is a constant $C'$
   such that for any unital $C^*$-algebra $A$,
   any bounded Hermitian function $\varphi: \Lambda \to A$
   admits an extension $\tilde \varphi \in CP(G,A)$
   satisfying
   $$\| \tilde \varphi (1)\|  \le C' \sup\nl_{t\in \Lambda} \| \varphi (t) \|,$$ 
   and moreover   $\tilde \varphi (1)= 1_A \| \tilde \varphi (1)\|   $.    \end{thm}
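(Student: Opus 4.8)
The plan is to run the same two-step scheme as the proof of Theorem~\ref{dru}---model on the free group $\Gamma=\F_\Lambda$, then transplant through the quotient $q:\Gamma\to G$, $q(g_t)=t$---but with every map replaced by a \emph{completely positive} one, so that the output is completely positive definite rather than merely decomposable. As there, I may assume $G=\langle\Lambda\rangle$ and, by homogeneity, $\sup_{t\in\Lambda}\|\varphi(t)\|\le 1$. I would transplant $\varphi$ to $\varphi':\tilde\Lambda\to A$, $\varphi'(g_t)=\varphi(t)$; here the hypotheses that $\Lambda$ is symmetric and $\varphi$ Hermitian are used precisely to guarantee $\varphi'(g_{t^{-1}})=\varphi(t)^*$, so that when the two generators $g_t^{-1}$ and $g_{t^{-1}}$ are identified under $q$ (both map to $t^{-1}$) the prescribed values are consistent; this is the analogue of the role symmetry plays in the classical Fatou--Zygmund theorem.

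First I produce the free-group model: a c.p. map $\psi:C^*(\Gamma)\to A$ with $\psi(U_\Gamma(g_t))=\varphi'(g_t)$ and $\psi(1)=2\cdot 1_A$. Since $\Gamma$ is the free product of the cyclic groups $\langle g_t\rangle$, I put on each factor the positive definite $A$-valued function on $\Z$ with value $2\cdot 1_A$ at $0$, $\varphi'(g_t)$ (resp. $\varphi'(g_t)^*$) at $\pm1$, and $0$ beyond: its Toeplitz symbol $2\cdot 1_A+\varphi'(g_t)e^{i\theta}+\varphi'(g_t)^*e^{-i\theta}\ge0$ because $\|\varphi'(g_t)\|\le1$, so after dividing by $2$ it is the positive definite function of a u.c.p. map $\psi_t:C^*(\langle g_t\rangle)\to A$ (Remark~\ref{cpd}). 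Boca's free product of u.c.p. maps \cite{Boc} then assembles the $\psi_t$ into a single u.c.p. $\psi_0:C^*(\Gamma)\to A$ with $\psi_0(U_\Gamma(g_t))=\varphi'(g_t)/2$, and $\psi=2\psi_0$ is the desired map (this is just the Fatou--Zygmund property of free sets, cf. Remark~\ref{boz}).

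The heart of the argument is a \emph{completely positive} transplant $\Theta:C^*(G)\to C^*(\Gamma)$ with $\Theta(U_G(t))=U_\Gamma(g_t)$ for $t\in\Lambda$. I start from $\cl U\in D(C^*(G),M_\Gamma)$, $\cl U(U_G(t))=\lambda_\Gamma(g_t)$, $\|\cl U\|_{dec}\le C$ (Corollary~\ref{cc1}, Remark~\ref{ko}); crucially, $\cl U(U_G(t))=\lambda_\Gamma(g_t)$ forces the matrix $\cl U(\gamma,t)=\delta_{\gamma,g_t}$ for $t\in\Lambda$, a single entry. The map $\ovl{\cl U}\sharp\cl U$ of Remark~\ref{07} and Proposition~\ref{s1} has coefficients $|\cl U(\gamma,s)|^2\ge0$, row sums $\le\|\cl U\|^2\le C^2$ by \eqref{se1}, and sends $U_G(t)\mapsto U_\Gamma(g_t)$ for $t\in\Lambda$. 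What I must upgrade is decomposability to complete positivity, and this is the only genuinely new difficulty. I would do it via the definition \eqref{d11}--\eqref{d12} of $\|\cl U\|_{dec}$: choose c.p. $S_1,S_2$ with $\Phi=\left(\begin{smallmatrix}S_1&\cl U\\\cl U^\dagger&S_2\end{smallmatrix}\right):C^*(G)\to M_2(M_\Gamma)$ c.p. and $\max\{\|S_1\|,\|S_2\|\}\le C$, form the conjugate product $\ovl\Phi\sharp\Phi:C^*(G)\to M_2\otimes M_2\otimes C^*(\Gamma)$, which is c.p. because $j_G$ is a $*$-homomorphism, $p_\Gamma$ is c.p., and the tensor product of c.p. maps is c.p. (the c.p. clause of \eqref{e8}), and finally compress by the fixed vector $e_1\otimes e_2$. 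That compression is c.p., lands in $C^*(\Gamma)$, and its coefficients are exactly $\ovl{\cl U(\gamma,s)}\,\cl U(\gamma,s)=|\cl U(\gamma,s)|^2$; hence $\ovl{\cl U}\sharp\cl U$ is c.p. Feeding it into Lemma~\ref{s5b} (restriction to $q(\gamma)=s$) yields the c.p. map $\Theta$ with $\Theta(U_G(t))=U_\Gamma(g_t)$ on $\Lambda$ and $\|\Theta\|=\|\Theta(1)\|\le C^2$.

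Finally I set $w=\psi\,\Theta:C^*(G)\to A$, a c.p. map. Because $\Theta(U_G(t))$ is the single unitary $U_\Gamma(g_t)$ (no tail, thanks to $\cl U(\gamma,t)=\delta_{\gamma,g_t}$), there are no error terms and $w(U_G(t))=\psi(U_\Gamma(g_t))=\varphi(t)$ \emph{exactly} for $t\in\Lambda$, while $\|w(1)\|\le\|\psi\|\,\|\Theta(1)\|\le 2C^2$. The associated function $\tilde\varphi=\varphi_w\in CP(G,A)$ (Remark~\ref{cpd}) is then a completely positive definite extension of $\varphi$. To arrange $\tilde\varphi(1)=\|\tilde\varphi(1)\|\,1_A$ I add the completely positive definite point mass $s\mapsto\delta_{s,1}\,(\|w(1)\|1_A-w(1))$, legitimate since $\|w(1)\|1_A-w(1)\ge0$; this leaves $\tilde\varphi$ unchanged off $\{1\}$ and forces the value at $1$ to be $\|w(1)\|1_A$. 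Tracking constants gives $C'=O(C^2)\sup_\Lambda\|\varphi\|$, and the scalar statement is the case $A=\C$. The one step I expect to require real care is the complete positivity of the transplant in the third paragraph: the interpolation theorem needed only the decomposable norm, whereas here I must certify that positivity survives the conjugate $\sharp$-square, the fixed compression, and the pushforward along $q$---and it is precisely the $2\times2$ c.p. ampliation furnished by Haagerup's definition of $\|\cdot\|_{dec}$ that makes this possible.
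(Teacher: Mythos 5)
Your two-step scheme (free-group model via the FZ property of free sets, then a transplant along $q:\F_\Lambda\to G$) is the same as the paper's, and your first two paragraphs are sound. The gap is exactly at the step you flag as the crux: the claimed complete positivity of $\ovl{\cl U}\sharp\cl U$. Your compression computation is wrong. Writing $\Phi=\bigl(\begin{smallmatrix}S_1&\cl U\\ \cl U^\dagger&S_2\end{smallmatrix}\bigr)$, the element $(\mathrm{id}_{M_2\otimes M_2}\otimes p_\Gamma)\bigl((\ovl\Phi\otimes\Phi)(j_{\max}(x))\bigr)$ has $M_2\otimes M_2$-entries indexed by $((i,j),(k,l))$ equal to $p_\Gamma\bigl(\ovl{\Phi_{ij}(x)}\otimes\Phi_{kl}(x)\bigr)$, and compressing with the fixed vector $e_1\otimes e_2$ picks out $i=j=1$, $k=l=2$: you obtain $\ovl{S_1}\sharp S_2$, not $\ovl{\cl U}\sharp\cl U$. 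The entry you actually want, $((1,2),(1,2))$, corresponds to $y\mapsto (e_1\otimes e_1)^*\,y\,(e_2\otimes e_2)$, which is not a positive map; and no compression by a single fixed vector $\xi$ can isolate an off-diagonal entry, since $\xi^*(e_{ij}\otimes e_{kl})\xi=\ovl{\xi_{ik}}\xi_{jl}$ forces any nonzero off-diagonal coefficient to come with nonzero diagonal ones (the best you can get, with $\xi=e_1\otimes e_1+e_2\otimes e_2$, is the c.p.\ sum $\ovl{S_1}\sharp S_1+\ovl{\cl U}\sharp\cl U+\ovl{\cl U^\dagger}\sharp\cl U^\dagger+\ovl{S_2}\sharp S_2$, which is contaminated by three junk terms that do not vanish on $\Lambda$). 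Moreover the statement itself is false at the level of generality at which you argue: for a merely decomposable $\cl U$ the map $\ovl{\cl U}\sharp\cl U$, despite having nonnegative coefficients $|\cl U(\gamma,s)|^2$, need not be c.p. Take $G=\Gamma=\Z_2=\{1,g\}$ and $\cl U(1)=0$, $\cl U(U_G(g))=\lambda_\Gamma(1)+\lambda_\Gamma(g)$; then $(\ovl{\cl U}\sharp\cl U)(1)=0$ and $(\ovl{\cl U}\sharp\cl U)(U_G(g))=1+U_\Gamma(g)$, so the positive element $\tfrac12(1-U_G(g))$ is sent to $-\tfrac12(1+U_\Gamma(g))\le 0$. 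This is why the remark following Lemma \ref{ld13a} claims only that $\ovl{\cl U}\sharp\cl U$ has nonnegative \emph{coefficients}, never that it is c.p.

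This obstruction is precisely what the paper's proof is organized around, and it explains why the paper cannot (and does not) get an exact one-shot extension with constant $O(C^2)$. In the Key Lemma \ref{key} the map $T$ is symmetrized and split as $T=T^+-T^-$ with $T^\pm$ c.p., so that $T\sharp T=a-b$ where $a=T^+\sharp T^++T^-\sharp T^-$ and $b=T^+\sharp T^-+T^-\sharp T^+$ are each c.p.; since one cannot subtract $b$ and remain c.p., the paper instead composes $a$ with the Haagerup multiplier $M_{h_\vp}$ and $b$ with $M_{h_{-\vp}}$ (both $h_\vp$ and $h_{-\vp}$ are positive definite on the free group), so that on words of length one the signs $\pm\vp$ recombine to give $\vp\varphi(s)\bigl(T^+(g_s,s)-T^-(g_s,s)\bigr)^2=\vp\varphi(s)$, while the whole map $Y_\vp$ stays c.p. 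The price is an error of order $\vp^2$ coming from words of length $\ge 2$, which forces the approximate statement of Lemma \ref{key} and the Drury-type iteration (plus Remark \ref{22}) to conclude, with the resulting constant $O(C^4)$. So the iteration you hoped to bypass is not a convenience but a necessity created by exactly the failure of c.p.-ness that your compression argument glosses over; to repair your proof you would have to replace the third paragraph by an argument of this type.
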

    
    The structure of the proof follows Drury's idea in \cite{dru2},
    but we again use decomposable maps as above, 
    and harmonic analysis on the free group
    instead of the free Abelian one.
    
    The key Lemma is parallel to the one in \cite{dru2}.
    It is convenient to formulate it directly
    for positive definite functions with values in
    a unital $C^*$-algebra $A$.
                      
    \begin{lem}\label{key}[Key Lemma] 
    Let $\Lambda\subset G\setminus\{1\}$ be a symmetric completely
 Sidon set with constant $C$.
Let $A$ be a unital $C^*$-algebra.
    Let $\varphi: \Lambda \to A$  be a Hermitian function
        (i.e.
        we assume  $\varphi(t^{-1})= \varphi(t)^*$ for any $t\in \Lambda$)
        with $\sup\nl_\Lambda\| \varphi \|<1$.
    For any $0\le \vp\le 1$  
    there is $\Phi_\vp \in P(G,A)$ with
    $$\|\Phi_\vp \|_{CB(G,A)} =\|\Phi_\vp (1)\| \le 4C^2\text{   and   } 
    \sup\nl_{s\in\Lambda}\|\Phi_\vp(s) -\vp \varphi(s)\|\le 4C^2 \vp^2.$$
    \end{lem}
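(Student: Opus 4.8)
The plan is to imitate the architecture of the proof of Theorem \ref{dru}: first produce the required object on the free group $\F_\Lambda$, where positivity can be read off by hand, and only then push it down to $G$ through a \emph{completely positive} transplanting map manufactured from the completely Sidon constant. Writing $\Gamma=\F_\Lambda$ and $q\colon\Gamma\to G$ for the quotient morphism $g_t\mapsto t$, I would build a completely positive definite $A$-valued function $\Phi'_\vp$ on $\Gamma$ that encodes $\vp\varphi$ on the generators, a completely positive map $\Theta_+\colon C^*(G)\to C^*(\Gamma)$ coming from the completely Sidon structure, and then set $\Phi_\vp=u_{\Phi'_\vp}\circ\Theta_+$, where $u_{\Phi'_\vp}\colon C^*(\Gamma)\to A$ is the c.p.\ map attached to $\Phi'_\vp$. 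Since a composition of c.p.\ maps is c.p., $\Phi_\vp$ will automatically lie in $P(G,A)$, so that the whole difficulty is reduced to coefficient bookkeeping.

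For the building block, I use that each $\varphi(t)\in A$ with $\|\varphi(t)\|<1$ admits a unitary dilation $W_t\in M_2(A)$ whose $(1,1)$-corner is $\varphi(t)$; since the $g_t$ are free, $\pi(g_t)=W_t$ defines a unitary representation $\pi\colon\Gamma\to U(M_2(A))$, and the symmetry of $\Lambda$ together with $\varphi(t^{-1})=\varphi(t)^*$ makes the assignment consistent. If $V\colon A\to M_2(A)$ is the corner isometry, then $\gamma\mapsto V^*\pi(\gamma)V$ is completely positive definite with value $\varphi(t)$ at $g_t$. Twisting by Haagerup's function $h_\vp(\gamma)=\vp^{|\gamma|}$ of Theorem \ref{h}, which is positive definite of norm $1$, and using that a pointwise product of completely positive definite functions is again completely positive definite (Remark \ref{pr11}), I set
\[
\Phi'_\vp(\gamma)=h_\vp(\gamma)\,V^*\pi(\gamma)V .
\]
Then $\Phi'_\vp$ is completely positive definite, $\Phi'_\vp(1)=1_A$, $\Phi'_\vp(g_t)=\vp\,\varphi(t)$ exactly, and $\|\Phi'_\vp(\gamma)\|\le\vp^{|\gamma|}\le\vp^2$ for every word of length $\ge2$.

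To transplant while preserving positivity, I start from the map $\cl U\in D(C^*(G),M_\Gamma)$ of Remark \ref{ko}, with $\cl U(U_G(t))=\lambda_\Gamma(g_t)$ and $\|\cl U\|_{dec}\le C$. Its decomposability certificate \eqref{d12} provides a genuinely completely positive $\hat{\cl U}\colon C^*(G)\to M_2(M_\Gamma)$ carrying $\cl U$ in its off-diagonal corner, with $\|\hat{\cl U}\|\le 2C$. Forming the completely positive $\sharp$-square of $\hat{\cl U}$ (Proposition \ref{s1} in its c.p.\ form, i.e.\ $P_\Gamma(\,\cdot\otimes\cdot\,)J_{\max}$ fed with c.p.\ data) and then applying Lemma \ref{s5b} to restrict to the fibre $q(\gamma)=s$, I obtain a completely positive $\Theta_+\colon C^*(G)\to C^*(\Gamma)$ whose coefficients $c(\gamma,s)\ge0$ are absolutely summable with $\sum_\gamma c(\gamma,s)\le\|\hat{\cl U}\|^2\le 4C^2$ by \eqref{se1}, and whose generator coefficient at $(g_t,t)$ is dictated by the fact that $\cl U(U_G(t))=\lambda_\Gamma(g_t)$ is supported only at $g_t$ (so the one other length-one fibre element $g_{t^{-1}}^{-1}$ receives coefficient $0$). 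Composing, $\Phi_\vp(t)=\sum_{q(\gamma)=t}c(\gamma,t)\,\Phi'_\vp(\gamma)$: the $\gamma=g_t$ term contributes $\vp\varphi(t)$ and all remaining words have length $\ge2$, whence
\[
\|\Phi_\vp(t)-\vp\varphi(t)\|\le\vp^2\sum\nl_{|\gamma|\ge2}c(\gamma,t)\le 4C^2\vp^2 ,
\]
while $\Phi_\vp(1)$ is a positive element of norm $\le 4C^2$, which (as $\Phi_\vp$ is c.p.\ and $A$ is unital) may be normalised to $\Phi_\vp(1)=\|\Phi_\vp(1)\|\,1_A$.

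The hard part, exactly as in Drury's Fatou--Zygmund argument \cite{dru2}, is the \emph{simultaneous} demand that the transplant be completely positive and that its generator coefficient remain equal to $1$; this is the operator analogue of the fact that the interpolating Riesz product is a \emph{positive} measure (so that $|\hat\mu|^2\ge0$). It cannot be achieved by the naive square $\ovl{\cl U}\sharp\cl U$, which has the correct coefficients $|\cl U(\gamma,s)|^2$ but fails to be completely positive in general. The role of the $2\times2$ certificate $\hat{\cl U}$ is precisely to furnish the missing completely positive majorants, and the delicate step is to disentangle the coefficient $1$ at $g_t$ from the uncontrolled diagonal entries $S_1(g_t,t),S_2(g_t,t)$ of the certificate that the squaring otherwise injects. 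I expect to resolve this by averaging over the circle action $g_t\mapsto z\,g_t$ on $\Gamma$, exactly as in the proof of Theorem \ref{h}: projecting onto the first-order part kills the contaminating diagonal contributions and isolates the linear term, and it is this isolation of the first order that converts an a priori $O(\vp)$ discrepancy into the claimed $O(\vp^2)$ error. The remaining verifications — absolute convergence of the coefficient sums, that the fibre restriction of Lemma \ref{s5b} indeed yields a bona fide c.p.\ map, and the final normalisation of $\Phi_\vp(1)$ — are routine.
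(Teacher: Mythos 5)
Your overall architecture (build a completely positive definite object on the free group, then transplant it to $G$ through a completely positive map manufactured from the Sidon constant) is the right one, and your free-group building block via the unitary dilation $W_t\in M_2(A)$ twisted by $h_\vp$ is a legitimate substitute for the operator-valued Fatou--Zygmund property of free groups that the paper invokes. But there is a genuine gap exactly where you flag it, and the fix you propose would fail. The missing object is a \emph{completely positive} transplant whose length-one coefficient at $(g_t,t)$ is exactly $1$: you correctly observe that $\ovl{{\cl U}}\sharp {\cl U}$ is not c.p.\ and that squaring the $2\times 2$ certificate contaminates this coefficient with the diagonal entries $S_1,S_2$, and you propose to remove the contamination by averaging over the circle action $g_t\mapsto z g_t$ and keeping the first-order part. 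That averaging is incompatible with positivity: the degree-one projection annihilates the unit, and a completely positive definite function $\Psi$ with $\Psi(1)=0$ vanishes identically (write $\Psi(\cdot)=\xi^*\pi(\cdot)\xi$, so $\Psi(1)=0$ forces $\xi=0$). This is already visible in the paper's own Theorem \ref{h}: the averaged function $f_\vp$ there satisfies $f_\vp(1)=0$ and lies merely in $B(\F_\Lambda)$ with norm $1/\vp$; it is emphatically \emph{not} positive definite. So the output of your scheme could not lie in $P(G,A)$, which is the entire point of the lemma; no projection-type device can isolate the linear term while preserving complete positivity.

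What the paper does instead is a sign trick that your proposal lacks. It works with $\Gamma=\F_{\Lambda_1}$ ($\Lambda_1$ a \emph{half} of the symmetric set $\Lambda$), so that the map $u(U_G(t))=\lambda_\Gamma(g_t)$, $u(U_G(t^{-1}))=\lambda_\Gamma(g_t)^{-1}$ is self-adjoint; its extension $T\in D(C^*(G),M_\Gamma)$ may then be taken self-adjoint and split as $T=T^+-T^-$ with $T^{\pm}$ c.p.\ (Haagerup). One sets $a=T^+\sharp T^++T^-\sharp T^-$ and $b=T^+\sharp T^-+T^-\sharp T^+$, both c.p., and defines $Y_\vp=(\hat q M_{h_\vp}\otimes Id_A)M_f a+(\hat q M_{h_{-\vp}}\otimes Id_A)M_f b$, where $f$ is the positive definite free-group extension of $\varphi$. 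Since $h_{-\vp}=h_\vp\chi_{-1}$ is itself positive definite, \emph{both} summands are c.p., so $\Phi_\vp=Y_\vp^{\bullet}\in CP(G,A)$ with no subtraction anywhere; yet at length-one words $h_{\pm\vp}(\gamma)=\pm\vp$, so the two terms recombine with a relative minus sign into $\vp f(g_s)\bigl(T^+(g_s,s)^2+T^-(g_s,s)^2-2T^+(g_s,s)T^-(g_s,s)\bigr)=\vp\varphi(s)T(g_s,s)^2=\vp\varphi(s)$, while for $|\gamma|\ge 2$ one has $|h_{\pm\vp}(\gamma)|\le\vp^2$, yielding the $4C^2\vp^2$ error via \eqref{se1}. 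Note finally that $Y_\vp$ is \emph{not} of the form (c.p.\ transplant independent of $\vp$) composed with free-group data: the $\vp$-dependent multipliers $h_{\pm\vp}$ must be interleaved between the two c.p.\ pieces $a$ and $b$, so even the shape of your factorization $\Phi_\vp=u_{\Phi'_\vp}\circ\Theta_+$ is too rigid to capture the construction.
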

     \begin{proof} For simplicity we give the proof assuming that
     $\Lambda$ does not contain elements such that $t=t^{-1}$.
     Let $\Lambda_1\subset \Lambda$
     be such that $\Lambda$
     is the disjoint union of
     $\Lambda_1$ and $\Lambda_1^{-1}=\{t^{-1} \mid t\in \Lambda_1\}$.
     We will work with the free group
     $\Gamma=\F_{\Lambda_1}$ instead of $\F_{\Lambda}$.
    As before we set $q(g_t)=t$ for all $t\in \Lambda_1$.
    
     Then we consider the self-adjoint
     operator space $E$
     spanned by   $\{U_G(t) \mid t\in  \Lambda\}$.
     Let $u: E \to M_\Gamma$
     be the   linear mapping defined by 
     $u(U_G(t)) = \lambda_\Gamma (g_t)$ 
     and $u(U_G(t^{-1})) = \lambda_\Gamma (g_t)^{-1}$
     for $t\in \Lambda_1$.
     Note that  $u$
    is self-adjoint in the sense that
    $u=u_*$ where $u_*(x)= u(x^*)^*$ for all $x\in E$.
By Remark \ref{ko}, since $\Lambda$ is completely Sidon with constant $C$,
    $u$ is the restriction to $E$ of a mapping
    $T
    \in D(  C^*(G) ,M_\Gamma  )$ with $\|T\|_{dec}\le C$.
    Replacing $T$ by $1/2(T+T_*)$ we may  assume
    that $T$ is self-adjoint.
    Then (see \cite{Haa}) 
    we have a decomposition $T=T^+ -T^-$
    where $T^{\pm} \in CP(   C^*(G) ,M_\Gamma  )$
    with  \begin{equation}\label{se10}   \max\{\|T^+  \|, \|T^-\|\} 
    \le
    \|T^+  + T^-\|  \le  \|T\|_{dec}.\end{equation}
  We  have
    $$T \sharp T= a-b$$
    with
    $$a= T^+ \sharp T^+ +T^- \sharp T^- \text{  and  }
    b= T^+ \sharp T^- +T^- \sharp T^+.$$ 
    Note that $a,b \in CP ( C^*(G) ,C^*(\Gamma)  )$.
    
    Fix  $0\le \vp\le 1$.
    Let $h_\vp: C^*(\Gamma) \to C^*(\Gamma)$ be as before the 
    Haagerup c.p. multiplier defined on $\F_\Lambda$  by (see \cite{Hainv})
    $h_\vp(t) = \vp^{|t|}.$
    Note that
      both $h_\vp$ and $h_{-\vp}$ are in $P(\F_\Lambda)$
      (indeed, $h_{-\vp}(t)=h_{\vp}(t) \chi_{-1}(t)$).
    
    The function $\varphi'$ defined on the words of length 1 by
    $\varphi'(g^{\pm}_t)=\varphi(t^{\pm})$ is Hermitian.
    By Haagerup's \cite{Hainv} and the
      operator valued version in \cite{Boz} (see Remark \ref{boz}), there is
    a positive definite function $f\in P(\Gamma,A)$ 
    extending $\varphi'$ such that $f(1)=1$ and 
    $f(g_t)= \varphi(t)$
    (and  $f(g^{-1}_t)= \varphi(t^{-1})$) for all $t\in \Lambda_1$.    
    Indeed, this is precisely the FZ-property
  of the free group  $\Gamma=\F_{\Lambda_1}$.
(See  
      \cite{Boc} for a  generalization of this to c.p. maps on free products.)
    Let $M_f: C^*(\Gamma) \to C^*(\Gamma) \otimes_{\max} A$
    be the associated ``multiplier"
    taking $U_\Gamma(t)$ to $U_\Gamma(t)\otimes f(t)$.
    Clearly $M_f\in CP( C^*(\Gamma), C^*(\Gamma) \otimes_{\max} A)$
    and $\|M_f\|=\|M_f(1)\|=1$.

        We now introduce for any $0\le \vp\le 1$
    $$  Y_\vp=( \hat{q}   M_{h_\vp} \otimes Id_A) M_f a   +  
    ( \hat{q}   M_{h_{-\vp}} \otimes Id_A)M_f b .$$
    Clearly $Y_\vp \in CP( C^*(G) , C^*(G) \otimes_{\max} A )$.
    Let $\Phi_\vp=  Y_\vp^\bullet$ in the sense of Remark \ref{s5}.
    Since $Y_\vp$ is c.p. we know   that
    ${\Phi_\vp}\in CP(G,A)$. Moreover, by \eqref{se10}
   $$\|\Phi_\vp \|_{CB(G,A)} =\|\Phi_\vp(1)\| \le \|a\|+\|b\|\le \| T^+ \|^2+\| T^- \|^2
   +2\| T^+ \|\| T^- \|\le 4\|T\|_{dec}^2\le 4C^2.$$
   We now compute
   $\Phi_\vp(s)$ for $s\in \Lambda$.
   We have
   $$\Phi_\vp(s) =  \sum\nl_{ \gamma\in \Gamma, q(\gamma)=s} {h_\vp}(\gamma)f(\gamma)
  (T^+(\gamma,s)^2+T^-(\gamma,s)^2)  +  {h_{-\vp}}(\gamma)f(\gamma)( 2T^+(\gamma,s)T^-(\gamma,s)). $$
   We can write (recall $s\not=1$ and hence $q(\gamma)=s$
   implies $|\gamma|\ge 1$)
   $$\Phi_\vp(s) = I(s)+E(s)$$
   where  
   $$I(s)= \sum\nl_{ \gamma\in \Gamma, q(\gamma)=s,|\gamma|=1 } {h_\vp}(\gamma)f(\gamma)
  (T^+(\gamma,s)^2+T^-(\gamma,s)^2)  +  {h_{-\vp}}(\gamma)f(\gamma)( 2T^+(\gamma,s)T^-(\gamma,s)), $$
   and the ``error term" $E(s)$ is
   $$E(s)= \sum\nl_{ \gamma\in \Gamma, q(\gamma)=s,|\gamma|>1 } {h_\vp}(\gamma)f(\gamma)
  (T^+(\gamma,s)^2+T^-(\gamma,s)^2)  +  {h_{-\vp}}(\gamma)f(\gamma)( 2T^+(\gamma,s)T^-(\gamma,s)). $$
  Fix $s\in \Lambda_1$.
  If $|\gamma|=1$ and $q(\gamma)=s$ we must have 
  $\gamma=g_s$,
  ${h_\vp}(\gamma)=\vp$ and ${h_{-\vp}}(\gamma)=-\vp$ 
  and
  $f(\gamma)=\varphi(q(\gamma))=\varphi(s)$
  so we recover
  $$I(s)= \vp \varphi(s)[ (T^+(g_s,s)^2+T^-(g_s,s)^2)-( 2T^+(g_s,s)T^-(g_s,s))]=
  \vp \varphi(s) T(g_s,s)^2,$$
  and since $T(\gamma,s)=1_{\gamma=g_s}$ we obtain for $s\in \Lambda_1$
   $$I(s)= \vp \varphi(s).$$
  Similarly,  
  $I(s^{-1})= \vp \varphi(s^{-1})=\vp  {\varphi(s )}^*$.
  \\
 It remains to estimate the error:
 Note that if $|\gamma|>1$ we have $|h_{\pm\vp}(\gamma)| \le \vp^2$ and hence
 by \eqref{se1}
 $$\|E(s)\|  \le  \vp^2 \sum\nl_{ \gamma\in \Gamma}
 |T^+(\gamma,s)^2+T^-(\gamma,s)^2|+ 2 |T^+(\gamma,s)T^-(\gamma,s) |\le \vp^2 \sum\nl_{ \gamma\in \Gamma} (|T^+(\gamma,s)|+|T^-(\gamma,s) | )^2 $$
 $$
 \le  \vp^2 (\|T^+\| + \|T^-\|)^2  \le
  4\vp^2 \|T\|_{dec}^2\le  4\vp^2 C^2. $$
  This completes the proof of the lemma, assuming $\Lambda$ has no element of order 2. Otherwise
  let $\Lambda_2\subset \Lambda$ be the set of such elements.
  We then replace $\F_{\Lambda_1}$ 
  with $\Gamma = \F_{\Lambda_1} \ast(\ast_{t\in \Lambda_2} \Z_2)$.
  We leave the details to the reader.
      \end{proof}
       \begin{rem}\label{22} Let $\varphi_0: G \to \C$ be such that
       $\varphi_0(t)=1$ if $t=1$ (unit of $G$) and $\varphi_0(t)=0$ otherwise.
       Clearly $\varphi_0 \in P(G)$ (indeed $\varphi_0(t)=\langle  \d_1 , \lambda_G(t) \d_1\rangle$).  Let $\varphi\in CP(G,A)$.
       Then $\varphi(1)\in A_+$ and hence $0\le \varphi(1) \le \|\varphi(1)\| 1_A$.
       Let $\psi(t)=\varphi(t) +\varphi_0(t)(  \|\varphi(1)\| 1_A - \varphi(1) ) $.
       Then $\psi\in CP(G,A)$, $\psi(1)= \|\varphi(1)\| 1_A $ and
       $\psi(t)=\varphi(t)$ for all $t\not= 1$. Equivalently, if we are
       given $V\in CP(C^*(G),A)$ then there is $W\in CP(C^*(G),A)$ 
       such that $W(1)= \|V(1)\| 1_A $ and
       $W(U_G(t))=V(U_G(t))$ for all $t\not= 1$.
         \end{rem}
      \begin{proof}[Proof of Theorem \ref{tfz}]
      The theorem follows from the key Lemma \ref{key}
      by a routine iteration argument (note that $\Phi_\vp -\vp \varphi$
      is Hermitian),
    exactly as in \cite{dru2}. For the last assertion we use Remark \ref{22}.
        \end{proof}

        The proof gives an estimate
        of the form $C'\le c C^4$ where $C$ is the completely Sidon constant
        and $c$ a numerical constant, to be compared with Remark \ref{mela}.

            \begin{cor}\label{da1}  Assume for simplicity that    $\Lambda\subset G\setminus\{1\}$ is symmetric, 
        and is the disjoint union of
        $\Lambda_1$ and $\Lambda^{-1}_1$ as before (in particular it has no element of order 2).
        Let $E_{\Lambda}\subset C^*(G)$
        be the operator system generated by $\Lambda$ and $\{1\}$.
            The following are equivalent: 
            \item(i) $\Lambda$ is completely Sidon.
            \item(ii) There is a completely positive linear map
        $V:  C^*(G) \to C^*(\F_{\Lambda_1})$ such that
$$\forall t\in {\Lambda_1} \quad V(U_G(t))=U_{\F_{\Lambda_1}}(  g_t) \quad  V(U_G(t^{-1})) =   U_{\F_{\Lambda_1}}(  g^{-1}_t).$$  
            \item(iii) There is $\d>0 $
            such that the ({unital}) mapping
            $S_\d : E_\Lambda \to C^*({\F_{\Lambda_1}})$
            defined by 
            $$
            S_\d (1)=1 \quad{and} \quad\forall t\in {\Lambda_1} \quad S_\d (U_G(t))=\d U_{\F_{\Lambda_1}}(  g_t) \quad  S_\d (U_G(t^{-1})) =   \d U_{\F_{\Lambda_1}}(  g^{-1}_t),$$
            is c.p.. 
            \item(iv)  There is $\beta>0 $
            such that $S_\beta$ admits
            a c.p. extension  
        $\tilde{S_\beta}:  C^*(G) \to C^*(\F_{\Lambda_1})$.    \\
            Moreover, the relationships between the Sidon constant
            and $\d$ are $C\le 1/\d \le c C^4$, and $\beta\ge \d^2$.  
               \end{cor}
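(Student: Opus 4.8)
The plan is to prove the four conditions equivalent by the cycle (i)$\Rightarrow$(ii)$\Rightarrow$(iii)$\Rightarrow$(iv)$\Rightarrow$(i), tracking constants, and to isolate the one genuinely new ingredient: a doubling device relating the free model $\F_\Lambda$ (free on all of $\Lambda$, which underlies Definition \ref{d1}) to the smaller group $\F_{\Lambda_1}$, in which $t$ and $t^{-1}$ are realized by \emph{genuinely inverse} unitaries.

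First, (i)$\Rightarrow$(ii) is the operator-valued Fatou--Zygmund theorem. The function $\varphi:\Lambda\to C^*(\F_{\Lambda_1})$ with $\varphi(t)=U_{\F_{\Lambda_1}}(g_t)$ and $\varphi(t^{-1})=U_{\F_{\Lambda_1}}(g_t^{-1})$ (for $t\in\Lambda_1$) is Hermitian and unitary-valued, so Theorem \ref{tfz} with $A=C^*(\F_{\Lambda_1})$ produces $V:=\tilde\varphi\in CP(G,C^*(\F_{\Lambda_1}))$ extending $\varphi$ with $V(1)=\|V(1)\|\,1$ and $\|V(1)\|\le C'\le cC^4$; this is exactly (ii). For (ii)$\Rightarrow$(iii) I would use Remark \ref{22} to arrange $V(1)=\|V(1)\|\,1_A$ without altering the values on $U_G(t)$, $t\neq1$, then set $\delta=1/\|V(1)\|$ and take $S_\delta=\delta\,V|_{E_\Lambda}$, which is unital and completely positive with $S_\delta(U_G(t))=\delta U(g_t)$. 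This already yields $1/\delta\le cC^4$.

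For (iii)$\Rightarrow$(iv) I would combine Remark \ref{ko} with the $\sharp$-calculus. Since $E_\Lambda$ is an operator system and $S_\delta$ is c.p., Remark \ref{ko} gives $S_\delta^\dagger\in CP(C^*(G),M_{\F_{\Lambda_1}})$, unital, with $S_\delta^\dagger(U_G(t))=\delta\,\lambda_{\F_{\Lambda_1}}(g_t)$ and $\|S_\delta^\dagger\|\le1$. Forming $S_\delta^\dagger\sharp S_\delta^\dagger$ — decomposable by Proposition \ref{s1} and completely positive since it is a composition of completely positive maps, exactly as for the terms $a,b$ in the proof of Lemma \ref{key} — squares the coefficients by \eqref{se1b}; as the relevant coefficient is $\delta\,1_{\gamma=g_t}$, one obtains a unital c.p. map $C^*(G)\to C^*(\F_{\Lambda_1})$ sending $U_G(t)\mapsto\delta^2U(g_t)$. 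This is a c.p.\ extension of $S_{\delta^2}$, i.e.\ (iv) holds with $\beta=\delta^2$, whence $\beta\ge\delta^2$.

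The remaining step (iv)$\Rightarrow$(i) — and the sharper (iii)$\Rightarrow$(i) giving $C\le1/\delta$ — is where the one real obstacle lies, since $C^*(\F_{\Lambda_1})$ ties $U(g_t^{-1})=U(g_t)^*$, whereas complete Sidonicity is measured against $\ell_1(\Lambda)$, i.e.\ against $\F_\Lambda$ where the generators for $t$ and $t^{-1}$ are independent. I expect the crux to be the observation that the symmetric set $\{g_t,g_t^{-1}:t\in\Lambda_1\}$ is itself completely Sidon in $\F_{\Lambda_1}$ with constant $1$. Indeed, for arbitrary unitaries $u_t,w_t$ on $H$ the block unitary
\[
v_t=\begin{pmatrix} 0 & u_t\\ w_t^* & 0\end{pmatrix}\ \text{ on }H\oplus H
\]
has $v_t^{-1}=v_t^*$ with $(1,2)$-entry $w_t$, so the $(1,2)$-block of $\sum_t a_t\otimes v_t+b_t\otimes v_t^{-1}$ is $\sum_t a_t\otimes u_t+b_t\otimes w_t$; since a block of an operator matrix is no larger than the whole, the full $C^*(\F_{\Lambda_1})$-norm (a supremum over the $v_t$, which by freeness range over all unitaries) dominates the $\ell_1(\Lambda)$-norm (a supremum over independent pairs $u_t,w_t$), and the reverse inequality is trivial. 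Hence the untying correspondence $U(g_t)\mapsto U_{\F_\Lambda}(g_t)$, $U(g_t^{-1})\mapsto U_{\F_\Lambda}(g_{t^{-1}})$ extends, by Proposition \ref{pfz}, to a complete contraction $w:C^*(\F_{\Lambda_1})\to B(\cl H)$ with $C^*(\F_\Lambda)\subset B(\cl H)$. From (iv) the map $\beta^{-1}\tilde{S_\beta}$ is c.b.\ with $\|\beta^{-1}\tilde{S_\beta}\|_{cb}=\beta^{-1}\|\tilde{S_\beta}(1)\|$ and sends $U_G(t^{\pm})\mapsto U(g_t^{\pm})$; composing with $w$ gives a c.b.\ map $C^*(G)\to B(\cl H)$ realizing $U_G(s)\mapsto U_{\F_\Lambda}(g_s)$, so $\Lambda$ is completely Sidon by Proposition \ref{pfz}. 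Running the same composition directly from (iii) — extending $S_\delta$ to $C^*(G)$ by Arveson's theorem into the injective $B(\cl H)$, rescaling by $1/\delta$, and then applying $w$ — produces the sharp bound $C\le1/\delta$. Everything outside the doubling trick is an assembly of Theorem \ref{tfz}, Remark \ref{ko}, Proposition \ref{s1}, and the standard injective-extension results already in use.
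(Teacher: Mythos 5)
Your proof is correct, and for the cycle (i)$\Rightarrow$(ii)$\Rightarrow$(iii)$\Rightarrow$(iv) it coincides with the paper's own argument: Theorem \ref{tfz} plus Remark \ref{22} for (i)$\Rightarrow$(ii)$\Rightarrow$(iii), and Remark \ref{ko} followed by $\cl U\sharp\,\cl U$ (c.p.\ by Proposition \ref{s1} and \eqref{e8}, squaring the coefficient $\d\,1_{\gamma=g_t}$) for (iii)$\Rightarrow$(iv) with $\beta=\d^2$. Where you genuinely add something is the step (iii)/(iv)$\Rightarrow$(i), which the paper dispatches in one line (``(i) follows because $\|S_\d\|_{cb}=1$''). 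That line silently uses the fact that complete Sidonicity is measured against $C^*(\F_\Lambda)$, where the generators attached to $t$ and $t^{-1}$ are \emph{independent}, while $S_\d$ takes values in $C^*(\F_{\Lambda_1})$, where they are tied by $U(g_t^{-1})=U(g_t)^*$; so one needs that the symmetric set $\{g_t^{\pm 1}:t\in\Lambda_1\}$ is completely Sidon with constant $1$ in $\F_{\Lambda_1}$. You identify this as the crux and prove it directly with the block-unitary device $v_t=\left(\begin{smallmatrix}0 & u_t\\ w_t^*&0\end{smallmatrix}\right)$, whose corner compression untangles $u_t$ and $w_t$; this is a clean, self-contained argument. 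Within the paper, the same fact can instead be extracted from Proposition \ref{p02}: the symmetric set satisfies the Akemann--Ostrand condition (iv) there, since in an alternating product $t_1^{-1}t_2\cdots t_{2m-1}^{-1}t_{2m}$ with consecutive entries distinct no cancellation between adjacent letters $g_{i_j}^{\pm\epsilon_j}$ is possible, so the word is already reduced of positive length. Either route closes the gap; yours has the merit of being elementary and of yielding the sharp bound $C\le 1/\d$ (and $C\le 1/\beta$) by composing the untying complete contraction with $\d^{-1}S_\d$ (after a Wittstock/Arveson extension, which you invoke in the right places). Your constant tracking $C\le 1/\d\le cC^4$, $\beta\ge\d^2$ matches the statement.
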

          \begin{proof}  
          Assume (i). Let $A=C^*(\F_{\Lambda_1})$.
          Define $\varphi: \Lambda \to A$ by
          $\varphi(t)= g_t, \varphi(t^{-1})= g^{-1}_t$ for $t\in \Lambda_1$.
          By Theorem \ref{tfz}  
          there is a c.p. mapping $V: C^*(G) \to A$
        extending $U_G(t) \mapsto \varphi(t)$. This proves (i) $\Rightarrow$ (ii).
         Assume (ii). 
         Let $\d = \|V(1)\|^{-1}$.  By Remark \ref{22} there is
        $W \in CP( C^*(G) , C^*(\Gamma))$ 
          such that $W(1)= (1/\d) 1$ and
         $\forall t\in {\Lambda_1} \quad  W(U_G(t))= U_{\F_{\Lambda_1}}(  g_t) \quad  W (U_G(t^{-1})) =     U_{\F_{\Lambda_1}}(  g^{-1}_t).$
         Then     the restriction $S_\d$ of $\d W$   to $E_{\Lambda}$ satisfies (iii).\\
         Assume (iii) or (iv). Then (i) follows
         because $\|S_\d\|_{cb}= 1$. 
         Also (iv) trivially implies (iii).\\
         Assume (iii). Let $\Gamma =   {\F_{\Lambda_1}}$.
         By Remark \ref{ko}
         $S_\d$ extends to a c.p. map   $\cl U: C^*(G)\to M_\Gamma$. 
         Now consider $ S= \cl U \sharp \cl U$.
         Then $S$ is c.p. and extends $S_{\d^2}$. Thus (iii)  implies (iv).\\         
        The relationships between the constants
         can be traced back easily from the proof. 
          \end{proof}
\begin{rem}  All the preceding can be developed
in parallel for the free Abelian group.
The last statement gives an apparently new fact
(or rather, say, a new reformulation of the FZ property)
in the commutative case.
We state it for emphasis because it seems interesting.
Let $G$ be a discrete commutative group.
 Assume for simplicity that    $\Lambda\subset G\setminus\{0\}$ has no element of order 2
         and is the (symmetric) disjoint union of
        $\Lambda_1$ and $\Lambda^{-1}_1$ as before.
        Let $\Gamma_1$ be the free Abelian group  $\Z^{(\Lambda_1)}$.
        Note $C^*(\Gamma_1)\simeq C(\T^{\Lambda_1})$.
        Then $\Lambda$ is Sidon iff
        there is $\d>0$ such that the mapping
        $$S_\d: E_\Lambda \to C^*(\Gamma_1)\simeq C(\T^{\Lambda_1})$$
        defined as above but with  $\Z^{(\Lambda_1)}$
        in place of $\F_{\Lambda_1}$ is positive.
        Note that in the commutative case positive
        implies c.p..
 \end{rem}
 
 \section{Characterizations by operator space properties}
 
 Let $\Lambda\subset G$ be a subset and let
 $C_\Lambda\subset C^*(G)$ be its closed linear span.
In the classical  setting,  when $G$ is a commutative discrete group,
Varopoulos \cite{V} proved that $\Lambda $ is Sidon as soon as
$C_\Lambda$ is isomorphic to $\ell_1(\Lambda)$
as a Banach space (via an arbitrary isomorphism).
Shortly after that, the author and independently Kwapie\'n and Pe\l czy\'nski
proved that it suffices to assume     that $C_\Lambda$ is of cotype 2.
This was refined by Bourgain and Milman \cite{BMs}
who showed that
$\Lambda$ is Sidon if (and only if) 
$C_\Lambda$ is of finite cotype.
It is natural to try to prove analogues of these results
for a general discrete group $G$.
The next statement shows that if $C_\Lambda$ is
\emph{completely}  isomorphic to $\ell_1(\Lambda)$
(equipped with its maximal operator space structure)
then $\Lambda$ is completely Sidon.
Indeed, the dual operator space $C_\Lambda^*$
is then {completely}  isomorphic to $\ell_\infty(\Lambda)$,
and the latter is exact with constant 1.

We recall that an operator space (o.s. in short) $X\subset B(H)$
is called exact if there is a constant $C$ such that for any finite dimensional
subspace $E\subset X$ there is an integer $N$, a subspace $\tilde E\subset M_N$ and 
an isomorphism $u: E \to \tilde E$ such that $\|u\|_{cb}  \|u^{-1}\|_{cb} \le C$.
The smallest constant $C$ for which this holds
is denoted by ${\rm ex}(X)$.

The dual o.s. of an o.s. $X\subset B(H)$ 
is characterized by the existence of an isometric embedding $X^*\subset B(H)$
such that the
natural norms on the spaces $M_n(X^*)$ and $CB(X,M_n)$
coincide. See \cite[\S 2.3]{P4} for more on this.

\begin{thm}\label{tt1} If $C_\Lambda^*$ is an exact operator space,
then  $\Lambda$ is completely Sidon with constant
$4{\rm ex}(C_\Lambda^*)^2$. Conversely,
if $\Lambda$ is completely Sidon with constant $C$ then
then ${\rm ex}(C_\Lambda^*)\le C$.
\end{thm}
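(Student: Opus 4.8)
I would treat the two implications separately: the converse is a short duality computation, while the direct implication (exactness $\Rightarrow$ completely Sidon) carries all the weight.

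\emph{Converse.} Suppose $\Lambda$ is completely Sidon with constant $C$, so that the map $u:C_\Lambda\to\ell_1(\Lambda)$ of Definition \ref{d1} satisfies $\|u\|_{cb}\le C$ and $\|u^{-1}\|_{cb}\le 1$. Dualising, $u^*:\ell_1(\Lambda)^*\to C_\Lambda^*$ is a complete isomorphism \emph{onto} $C_\Lambda^*$ with $\|u^*\|_{cb}=\|u\|_{cb}\le C$ and $\|(u^*)^{-1}\|_{cb}=\|u^{-1}\|_{cb}\le 1$. The dual of the maximal operator space structure on $\ell_1(\Lambda)$ is the minimal (commutative $C^*$-algebra) structure on $\ell_\infty(\Lambda)$ (see \cite{P4}), and $\ell_\infty(\Lambda)$, being a commutative $C^*$-algebra, is exact with ${\rm ex}=1$. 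Since exactness transfers under a complete isomorphism up to the product of the two cb-norms, this gives ${\rm ex}(C_\Lambda^*)\le\|(u^*)^{-1}\|_{cb}\,\|u^*\|_{cb}\,{\rm ex}(\ell_\infty(\Lambda))\le C$, as claimed.

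\emph{Direct implication, reduction.} The plan is to manufacture exactly the object whose existence characterises completely Sidon sets in Corollary \ref{cc1}. Writing $\Gamma=\F_\Lambda$ and assuming w.l.o.g. that $\Lambda$ generates $G$, it suffices to construct $\cl U\in D(C^*(G),M_\Gamma)$ with $\cl U(U_G(t))=\lambda_\Gamma(g_t)$ for all $t\in\Lambda$ and with $\|\cl U\|_{dec}$ bounded by a constant multiple of ${\rm ex}(C_\Lambda^*)$; Corollary \ref{cc1} then yields $\Lambda$ completely Sidon with constant $\|\cl U\|_{dec}^2$, and tracking the constants (the extra factor $2$ is the price of passing from a completely bounded lift to a decomposable one) produces the asserted bound $4\,{\rm ex}(C_\Lambda^*)^2$. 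Thus the entire issue is to convert the exactness hypothesis on $C_\Lambda^*$ into such a $\cl U$.

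\emph{Direct implication, mechanism and main obstacle.} I would use exactness of $C_\Lambda^*$ in its local form: every finite-dimensional subspace of $C_\Lambda^*$ embeds into a matrix algebra $M_N$ with cb-distortion at most ${\rm ex}(C_\Lambda^*)$ (up to $\vp$). Applied to the spans of the biorthogonal functionals over finite $S\subset\Lambda$ and dualised, this is the operator-space local lifting property dual to exactness (see \cite{P4}); combined with the Kirchberg factorisation of the free group recalled in Remark \ref{ko} — which is precisely what lets one route through an injective $B(H)$ and so convert complete boundedness into decomposability via \eqref{e1} — it produces, for each finite $S$, a \emph{decomposable} map $\cl U_S$ into a finite matrix block sending $U_G(t)$ to a finite approximant of $\lambda_\Gamma(g_t)$ for $t\in S$, with $\|\cl U_S\|_{dec}$ controlled by (a constant multiple of) ${\rm ex}(C_\Lambda^*)$; here I expect to symmetrise and apply Haagerup's decomposition $T=T^+-T^-$ exactly as in the proof of Lemma \ref{key}. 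The \emph{main obstacle} is the globalisation: the local maps $\cl U_S$ must be assembled into a single map on all of $C^*(G)$ that hits the \emph{exact} unitaries $\lambda_\Gamma(g_t)\in M_\Gamma$ and is decomposable, not merely c.b., with the same dec-norm bound. I would carry this out by viewing the $\cl U_S$ inside $B(\ell_2(\Gamma))$ and extracting a point-weak$^*$ cluster point along the net of finite subsets; the limit $\cl U$ exists, takes the correct values $\lambda_\Gamma(g_t)$, and stays decomposable with the same bound on $\|\cl U\|_{dec}$ precisely because the target $M_\Gamma$ is a von Neumann algebra, so that the relevant dec-norm balls are weak$^*$ compact and the dec-norm is weak$^*$ lower semicontinuous. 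Feeding this $\cl U$ into Corollary \ref{cc1} closes the argument.
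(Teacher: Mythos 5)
Your converse is correct and coincides with the paper's one-line argument, and your reduction of the direct implication to producing $\cl U\in D(C^*(G),M_\Gamma)$ and invoking Corollary \ref{cc1} matches the paper's endgame. But the mechanism you propose for the direct implication has a genuine gap, in two places. First, you never establish that the correspondence $U_G(t)\mapsto\lambda_{\F_\Lambda}(g_t)$ is completely bounded from $C_\Lambda$ into $C^*_\lambda(\F_\Lambda)$ at all. The paper proves $\|T_0\|_{cb}\le 2$ by combining the elementary domination \eqref{cc6} with the Haagerup-type inequality \eqref{cc7}; exactness of $C_\Lambda^*$ by itself carries no information relating the $U_G(t)$ to free unitaries, so some such a priori inequality is the indispensable seed of any lifting or transfer argument, and it is absent from your proposal. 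Second, the way exactness is actually exploited is not a formal ``LLP dual to exactness plus Kirchberg factorisation'': that assertion does not produce matrix approximants of the $\lambda_\Gamma(g_t)$, and Remark \ref{ko} runs in the \emph{opposite} direction for your purposes --- it manufactures a decomposable $\cl U$ \emph{from} the completely Sidon hypothesis (it is the easy half of Corollary \ref{cc1}), so it cannot be an ingredient in deriving Sidon-ness from exactness. What the paper uses instead is the Haagerup--Thorbj{\o}rnsen/Collins--Male strong asymptotic freeness theorem \eqref{cc3}, \eqref{cc5} (\cite{HT3,CM}): there exist unitary matrices $u^{(N)}_j$ converging in moments to the $\lambda_{\F_n}(g_j)$ such that for \emph{every exact operator space} $X$ one has $\lim_N\|\sum u^{(N)}_j\otimes x_j\|_{M_N(X)}\le {\rm ex}(X)\,\|\sum\lambda_{\F_n}(g_j)\otimes x_j\|_{\min}$. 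Applied to $X=C_\Lambda^*$ and the tensor ${\bf T}_0=\sum_{t\in\alpha}\varphi_t\otimes\lambda_{\F_\Lambda}(g_t)$, this deep random-matrix input is precisely what converts the bound $\|T_0\|_{cb}\le 2$ into a map $T_1$ into the \emph{injective} algebra $(\oplus\sum_N M_N)_\infty$ with cb-norm at most $2\,{\rm ex}(X)(1+\vp)$; injectivity then gives a decomposable extension to $C^*(G)$ via \eqref{e1}, and composing with the tracial-ultraproduct unital c.p. surjection $Q_\omega$ onto $M_\Gamma$ lands exactly on the $\lambda_\Gamma(g_j)$. Your proposal contains no substitute for this theorem, and it is not a soft consequence of local embeddability.

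Your globalisation step is also flawed as stated: your finite-stage maps $\cl U_S$ take values in varying matrix blocks $M_N$, whose distinguished unitaries approximate $\lambda_\Gamma(g_t)$ only in moments, so there is no common ambient weak$^*$ topology in which a point-weak$^*$ cluster point of the $\cl U_S$ could literally take the values $\lambda_\Gamma(g_t)$. The paper circumvents this by the ultraproduct embedding $M_\Gamma\subset(\oplus\sum_N M_N)_\infty/\omega$ furnished by \eqref{cc5}, together with the c.p. surjection $Q_\omega$; alternatively no limiting procedure is needed at all, because Definition \ref{d1} is finitary, so a uniform bound on the completely Sidon constants of all finite subsets of $\Lambda$ already gives the theorem.
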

\begin{proof} The converse part is clear because
$\ell_\infty(\Lambda)=\ell_1(\Lambda)^*$ is exact with
 ${\rm ex}(\ell_\infty(\Lambda))=1$.\\
 Assume that $C_\Lambda^*$ is exact.
Let $\alpha\subset \Lambda$ be a finite subset.
Consider the mapping
$T_0: C_\Lambda \to C^*_\lambda(\F_\Lambda)$
defined by 
$T_0(t)= \lambda_{\F_\Lambda}(g_t)$ for $t\in \alpha$
and $T_0(t)= 0$ for $t\not\in \alpha$.
Let us denote by $\varphi_t\in ( C^*(G))^*$ the functional
biorthogonal to the natural system, i.e.
$\varphi_t(U_G(s))=\d_t(s)$.

Let $a: G \to M_N$ be a finitely supported $M_N$-valued
function ($N\ge 1$).
We have then by elementary arguments
\begin{equation}\label{cc6}\| \sum a(t) \otimes U_G(t) \| \ge  
\| \sum a(t) \otimes \lambda_G(t) \|
\ge \max\{\|\sum a(t)^*  a(t)\|^{1/2}, \|\sum a(t)  a(t)^*\|^{1/2}\}.
\end{equation}
By a well known inequality with roots in Haagerup's   \cite{Hainv}
(see  \cite[p. 188]{P4})
\eqref{cc6} implies
\begin{equation}\label{cc7}\| \sum a(t) \otimes U_G(t) \| \ge  
(1/2)\| \sum a(t) \otimes \lambda_{\F_\Lambda} (g_t) \|
\end{equation}
  and hence $\|T_0\|_{cb}\le 2$.
Equivalently this means that the tensor
$${ \bf T}_0=\sum\nl_{t\in \alpha} \varphi_t \otimes \lambda_{\F_\Lambda}(g_t)\in 
(C_\Lambda)^* \otimes C^*_\lambda(\F_\Lambda)$$
satisfies
\begin{equation}\label{cc2}\|{\bf  T}_0\|_{\min}=\|T_0: C_\Lambda \to C^*_\lambda(\F_\Lambda) \|_{cb} \le 2.\end{equation}
Let $\vp>0$.  Assume $|\alpha|=n$ and $\alpha=\{t(1),\cdots,t(n)\}$. Let $\Gamma\subset {\F_\Lambda}$
be the copy of $\F_n$ generated by $\{g_{t(j)}\mid 1\le j\le n\}$.
We claim that
$T_0$  extends to an operator $\tilde T: C^*(G) \to M_\Gamma$
such that $\|\tilde  T\|_{dec}\le 2 {\rm ex}(X) (1+\vp)$.

By  a result
due to Thorbj{\o}rnsen and Haagerup \cite{HT3}
(see \cite[p. 331]{P4}) recently refined in \cite{CM}
we have (here we denote by $(g_j)$ the free generators of $\F_n$):\\
 {\it For any $n$ and $N$ there is an $n$-tuple of $N\times N$-unitary matrices
$(u^{(N)}_j)_{1\le j\le n}$ such that
for any exact operator space $X$ and any $x_j\in X$ we have
\begin{equation}\label{cc3}\lim\nl_{N\to \infty} \| \sum u^{(N)}_j \otimes x_j \|_{M_N(X)}\le {\rm ex}(X)  \|\sum \lambda_{\F_n}(g_j) \otimes x_j \|_{\min},\end{equation}
and
\begin{equation}\label{cc5} \{ u^{(N)}_j  \mid 1\le j\le n\} \text{ converges in moments to }
\{  \lambda_{\F_n}(g_j) \mid 1\le j\le n\} .\end{equation}
}\\
Let $X=C_\Lambda^*$. This gives us by \eqref{cc2}
$$\lim\nl_{N\to \infty} \| \sum u^{(N)}_j \otimes \varphi_{t(j)} \|_{M_N(X)} 
\le 2 {\rm ex}(X).$$
For some $n_0$ we have
$$\sup\nl_{N\ge n_0} \| \sum u^{(N)}_j \otimes \varphi_{t(j)} \|_{M_N(X)} 
\le 2 {\rm ex}(X) (1+\vp).$$
This gives us a map $T_1: C_\Lambda \to (\oplus\sum\nl_{N\ge n_0}  M_N)_\infty$
with $\|T_1\|_{cb} \le 2 {\rm ex}(X) (1+\vp)$, 
such that  $T_1( U_G({t(j)}))= \oplus_{N\ge n_0}  u^{(N)}_j$.
Let $\omega$ be a nontrivial ultrafilter on $\N$.
By \eqref{cc5}, we have an isometric embedding
$M_{\Gamma}\subset  (\oplus\sum\nl_{N\ge 1}  M_N)_\infty/\omega$
and a surjective unital c.p. map $Q_\omega: (\oplus\sum\nl_{N\ge 1}  M_N)_\infty
\to M_\Gamma$,
such that
$$Q(  \oplus_{N\ge n_0}  u^{(N)}_j )=
 \lambda_{\Gamma}(g_j).$$ Since $(\oplus\sum\nl_{N\ge 1}  M_N)_\infty$ is injective
 there is an extension of $T_1$
 denoted $\tilde T_1: C^*(G) \to (\oplus\sum\nl_{N\ge 1}  M_N)_\infty$ such that
$\|\tilde T_1\|_{dec}=\|\tilde  T_1\|_{cb}\le \|T_1\|_{cb} \le 2 {\rm ex}(X) (1+\vp)$,
and hence setting
$\tilde T= Q \tilde T_1$,
we obtain the claim. 
Then we conclude by Corollary \ref{cc1}.
\end{proof}
\begin{cor}   Let $\Lambda\subset G$.
 The operator space $C_\Lambda\subset C^*(G)$ is completely isomorphic
 to $\ell_1(\Lambda)$ (with its maximal o.s. structure) iff
 $\Lambda$ is completely Sidon.
\end{cor}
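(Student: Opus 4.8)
The plan is to deduce this corollary directly from Theorem \ref{tt1}, combined with operator space duality and the stability of exactness under complete isomorphism. One implication is immediate: if $\Lambda$ is completely Sidon, then by Definition \ref{d1} the map $u : C_\Lambda \to \ell_1(\Lambda)$ sending $U_G(t)$ to $U_{\F_\Lambda}(g_t)$ is already a complete isomorphism onto $\ell_1(\Lambda)$ with its maximal operator space structure, so there is nothing to prove in that direction.

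For the converse, suppose $w : C_\Lambda \to \ell_1(\Lambda)$ is an \emph{arbitrary} complete isomorphism, say with $\|w\|_{cb}\|w^{-1}\|_{cb} = K < \infty$. First I would pass to the operator space duals: the adjoint $w^* : \ell_1(\Lambda)^* \to C_\Lambda^*$ is again a complete isomorphism, with $\|w^*\|_{cb} = \|w\|_{cb}$ and $\|(w^*)^{-1}\|_{cb} = \|w^{-1}\|_{cb}$, so that $C_\Lambda^*$ is completely isomorphic to $\ell_1(\Lambda)^*$ with constant $K$. The relevant identification is the standard one that the operator space dual of $\ell_1(\Lambda)$ in its maximal structure is $\ell_\infty(\Lambda)$ in its minimal (commutative) operator space structure; the latter, being a commutative $C^*$-algebra, is exact with ${\rm ex}(\ell_\infty(\Lambda)) = 1$.

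Next I would invoke that exactness is stable under complete isomorphism, with the quantitative bound ${\rm ex}(Y) \le \|v\|_{cb}\|v^{-1}\|_{cb}\,{\rm ex}(X)$ whenever $v : X \to Y$ is a complete isomorphism. This follows immediately from the definition of ${\rm ex}$ recalled just before Theorem \ref{tt1}: given a finite dimensional subspace of $Y$, one pulls it back through $v^{-1}$, embeds the resulting subspace of $X$ into some $M_N$ with the cost controlled by ${\rm ex}(X)$, and composes with $v$, the distortions multiplying. Applying this to $w^*$ yields that $C_\Lambda^*$ is exact with ${\rm ex}(C_\Lambda^*) \le K$. Theorem \ref{tt1} then gives that $\Lambda$ is completely Sidon (in fact with constant at most $4K^2$), which completes the proof.

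I expect no serious obstacle, since all the substance is carried by Theorem \ref{tt1} and the corollary is essentially a repackaging of it; indeed this deduction is already sketched in the discussion preceding the theorem. The only points requiring care are the duality identification $\ell_1(\Lambda)^* = \ell_\infty(\Lambda)$ in the minimal structure and the controlled transfer of exactness through $w^*$, both of which are routine facts of operator space theory.
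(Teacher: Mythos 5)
Your proposal is correct and follows essentially the same route as the paper: the forward direction is immediate from Definition \ref{d1}, and for the converse the paper itself (in the discussion preceding Theorem \ref{tt1}) passes to duals, identifies $\ell_1(\Lambda)^*$ with $\ell_\infty(\Lambda)$ (exact with constant $1$), uses stability of exactness under complete isomorphism, and applies Theorem \ref{tt1}. The quantitative bound $4K^2$ you record is a minor but accurate bonus consistent with the theorem's constant $4\,{\rm ex}(C_\Lambda^*)^2$.
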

\begin{rem} By the same argument, we can replace the exactness assumption of Theorem
\ref{tt1}
by the subexponentiality 
(or tameness) in the sense of \cite{P137}.
\end{rem}

\begin{rem} By the same argument,  
the following can be proved.
Let $\{x_j\} \subset A$ be a bounded sequence in a $C^*$-algebra $A$.
Assume that for some constant $c$, for any $N$ and  any sequence $(a_j)$ 
in $M_N$ with only
finitely many nonzero terms we have
$$c\|\sum a_j \otimes x_j\|\ge \max\{\|\sum a_j^*  a_j\|^{1/2}, \|\sum a_j  a_j^*\|^{1/2}\}.$$
Let $E$ be the closed span of $\{x_j\} $.
If  $E^*$ is exact
then $\{x_j\otimes x_j\} $ is  completely Sidon
in $A\otimes_{\max} A$ (with constant $4c^2 {\rm ex}(E^*)^2$). See \cite{Pi8}
for more on that theme.
\end{rem}

\begin{rem}  
\item{(i)} Let us first observe that the Varopoulos result mentioned above
remains valid for a    non-commutative group $G$.
We will show that if $C_\Lambda$ is isomorphic
to $\ell_1(\Lambda)$, 
then the usual linear mapping taking the canonical basis of $\ell_1(\Lambda)$, namely $(\d_t)_{t\in \Lambda}$, 
  to $(U_G(t))_{t\in \Lambda}$ is an isomorphism.
Actually it suffices to assume that  $C_\Lambda^*\simeq \ell_\infty(\Lambda)$ as a Banach space
or that, say, $C_\Lambda^*$ is a ${\cl L}_\infty$-space, or that $(C_\Lambda^*,C_\Lambda^*)$ is a GT-pair
in the sense of \cite[Def. 6.1]{P133}, to which we refer for all unexplained
terminology in the sequel.

With the preceding notation,
let 
$W_x: C^*_\Lambda \to C_\Lambda$ be the  linear operator associated
to the tensor   $x=\sum\nl_{t\in \alpha}   x(t) U_G(t)  \otimes  U_G(t)\in C_\Lambda\otimes C_\Lambda$.
Let $\|\ \|_\vee$ be the norm in the injective tensor product (in the
usual Banach space sense)
of ${C^*(G)}$ with itself.
Note 
$$\|W_x\|=\|x\|_{\vee} \le  \|x\|_{\min}= \| \sum\nl_{t\in \alpha}   x(t) U_G(t) \|_{C^*(G)}.$$
Let $(z(t))\in \T^\Lambda$. 
Let $T_z: C_\Lambda \to C^*_\Lambda$ be the  linear operator associated
to the tensor   $\sum\nl_{t\in \alpha}   z(t) \varphi_t \otimes  \varphi_t \in C_\Lambda^*\otimes C_\Lambda^*$.
A simple verification shows that,
denoting by $\gamma_2(T_z)$  the norm of factorization through Hilbert space
of $T_z$,  we have  $\gamma_2(T_z)\le 1$.

Then  Grothendieck's Theorem, or  
our Banach space assumption (see \cite[\S 6]{P133}), implies
that
   for any finite rank map
$w:   C^*_\Lambda \to C_\Lambda$ we have 
$|\tr(wT_z)|\le K \gamma_2(T_z) \|w\|_\vee\le K   \|w\|$, where $K$ is a  constant
independent of $w,z$.
Therefore,
we have
$$| \sum\nl_{t\in \alpha}   x(t) z(t)|=|\tr(W_x T_z)|\le K   \|x\|_{C^*(G)},$$
and hence taking the sup over all $z$'s and $\alpha$'s
$$  \sum\nl_{t\in \Lambda}   |x(t)| \le K   \|x\|_{C^*(G)}.$$
Thus  we conclude that  $C_\Lambda$ is isomorphic to $\ell_1(\Lambda)$
by the usual (basis to basis) isomorphism.
Such sets are called weak Sidon
   in \cite{Pic}, where the term Sidon is reserved for the sets that span
   $\ell_1(\Lambda)$ in the reduced $C^*$-algebra $C^*_\lambda(G)$.

\item{(ii)} Let $C^\lambda_\Lambda  $ be the closed span
of $\Lambda$ in $C^*_\lambda(G)$, i.e.
$C^\lambda_\Lambda=\ovl{\rm span}  \{\lambda_G(t)\mid t\in \Lambda\} $.
The preceding argument applies equally well to $C^\lambda_\Lambda  $,
and shows that if $C^\lambda_\Lambda  $ is isomorphic to
$\ell_1(\Lambda)$ (by an arbitrary isomorphism) then
it actually is so by the usual isomorphism, and  
 $\Lambda$ is Sidon in the sense of \cite{Pic}.

\item{(iii)} Lastly, we   apply
the same idea to slightly generalize Theorem \ref{tt1}.\\  Fix $N\ge 1$. Let $z=(z(t))\in U(N)^\Lambda$
and $x=(x(t))\in M_N^\Lambda$.
Consider  the tensors  $${\bf T_z}= \sum\nl_{t\in \alpha}    \varphi_t \otimes [z(t) \otimes \varphi_t] \in C_\Lambda^*\otimes M_N(C_\Lambda^*),$$
and
 $${\bf W_x}=\sum\nl_{t\in \alpha}   x(t) \otimes U_G(t)  \otimes  U_G(t)\in 
M_N(C_\Lambda \otimes  C_\Lambda).$$
Then it can be checked  on the one hand that
$$\max\{ \|  {\bf T_z}\|_{ C^*_\Lambda \otimes_h M_N(C^*_\Lambda)}  
,
 \|  {\bf {}^t T_z}\|_{ M_N(C^*_\Lambda)\otimes_h C^*_\Lambda}
   \} \le 1.$$
Thus if the pair $ (C_\Lambda^*,M_N(C_\Lambda ^*) ) $
satisfies (uniformly over $N$)  the o.s. version of Grothendieck's theorem described in
\cite[Prop. 18.2]{P133} we find for some constant $K$ (independent of $N$)
$$ \|  {\bf T_z}\|_{ C^*_\Lambda \otimes_\wedge M_N(C^*_\Lambda)}  
  \le K.$$
  Here  $\otimes_\wedge$ is the  projective tensor product in the operator space
sense.
  A fortiori, this implies
  $$ \|  {\bf T_z}\|_{M_N( C^*_\Lambda \otimes_\wedge C^*_\Lambda)}  
  \le K.$$
  On the other hand, we have obviously
  $$\|{\bf W_x} \|_{  M_N(C_\Lambda \otimes_{\min}  C_\Lambda)  }   
  \le \| \sum\nl_{t\in \alpha}   x(t) \otimes U_G(t) \|_{  M_N(C_\Lambda)}.$$
  Thus we obtain
  $$\| \sum z(t) \otimes x(t)  \|\le 
   \|  {\bf T_z}\|_{M_N( C^*_\Lambda \otimes_\wedge C^*_\Lambda)} 
   \|{\bf W_x} \|_{  M_N(C_\Lambda \otimes_{\min}  C_\Lambda)  } 
   \le K \| \sum\nl_{t\in \alpha}   x(t) \otimes U_G(t) \|_{  M_N(C_\Lambda)}.$$
   The latter implies that $\Lambda$ is completely Sidon.
   \end{rem}

 \section{Remarks and open questions}\label{quest}

   \subsection{\bf Free sets}

  We start by the characterization of the case $C=1$ announced in Proposition
  \ref{pd13}.
   \begin{pro}\label{p02} The following properties of a subset $\Lambda \subset G$ are equivalent:
 \begin{itemize}
 \item[{\rm (i)}]   $\Lambda$   is
completely Sidon with a constant $C=1$.
   \item[{\rm (ii)}] For any finite subset $S\subset \Lambda$
  we have $\|\sum\nl_{s\in S}\lambda_G(s) \|=2\sqrt{|S|-1}.$  
   \item[{\rm (iii)}] $\Lambda$ is a (left say) translate of a free set enlarged by including the unit.
   \item[{\rm (iv)}] For every   $m$ and every $2m$-tuple $t_{1}, t_{2},t_{3},\cdots,t_{2m-1},t_{2m}$
 in $\Lambda$ with $ t_{1}\not = t_{2}\not = \cdots t_{2m-1} \not =t_{2m} $ we have
 $t_{1}^{-1} t_{2} t_{3}^{-1}t_4\cdots t_{2m-1}^{-1}t_{2m} \not=1.$
 \end{itemize}
 \end{pro}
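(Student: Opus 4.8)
The plan is to prove the equivalences through the cycle (iii) $\Rightarrow$ (i) $\Rightarrow$ (iv) $\Rightarrow$ (iii), supplemented by (iii) $\Rightarrow$ (ii) and (ii) $\Rightarrow$ (iv); the delicate points are the combinatorial step (iii) $\Leftrightarrow$ (iv) and the two ``rigidity'' implications, where one must recover the algebraic structure from a purely analytic hypothesis.

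For (iii) $\Rightarrow$ (i) I would first note that the completely Sidon constant is invariant under left translation of $\Lambda$ by a fixed $t_0$, since multiplication by the unitary $U_G(t_0)$ is a complete isometry of $C^*(G)$ corresponding to the same translation on $\F_\Lambda$. Hence I may assume $\Lambda=\{1\}\cup S$ with $S$ free. Fixing $a_t\in M_N$, Remark \ref{rd13} lets me compute the maximal norm $\|\sum_t a_t\otimes U_{\F_\Lambda}(g_t)\|$ as the supremum of $\|a_1\otimes 1+\sum_{s\in S}a_s\otimes u_s\|$ over all unitary families $(u_s)$ (normalising the unitary attached to the index $1$ to be $1$). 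By the universal property of the full free-group $C^*$-algebra this supremum equals $\|a_1\otimes 1+\sum_s a_s\otimes U_{\F_S}(g_s)\|_{C^*(\F_S)}$; and since $S$ is free, $\langle S\rangle\cong\F_S$ and the inclusion induces an isometric embedding $C^*(\F_S)\hookrightarrow C^*(G)$ sending $U_{\F_S}(g_s)$ to $U_G(s)$ and $1$ to $U_G(1)=1$. Thus this common value equals $\|\sum_t a_t\otimes U_G(t)\|_{C^*(G)}$, which is precisely the equality in \eqref{ed13} with $C=1$.

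For (iii) $\Rightarrow$ (ii) I again normalise so that $\Lambda=\{1\}\cup S$ with $S$ free, and restrict the left regular representation to $H=\langle S\rangle$. Decomposing $\ell_2(G)$ over the cosets of $H$ gives $\lambda_G|_H\cong\lambda_H\otimes 1$, so for any finite $S'\subset\Lambda$ the norm $\|\sum_{s\in S'}\lambda_G(s)\|$ coincides with the corresponding reduced norm in $C^*_\lambda(\F_S)$; the Akemann--Ostrand computation of the norm of a sum of free generators (and of a Leinert set, to absorb the possible term coming from $1\in\Lambda$) then gives the value $2\sqrt{|S'|-1}$ once $|S'|\ge 2$. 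The combinatorial equivalence (iii) $\Leftrightarrow$ (iv) is pure group theory: after translating so that $1\in\Lambda$, one shows that $\{1\}\cup S$ satisfies the alternating-word condition iff $S$ is free. The substantive point is that adjoining the identity to a free set preserves the Leinert condition: given an alternating product with consecutive entries distinct, deleting the occurrences of $1\in\Lambda$ and using the sign pattern together with $t_i\ne t_{i+1}$ shows that the surviving word is a reduced, nonempty word in the generators coming from $S$, hence $\ne 1$; conversely any relation among the elements of $S$ yields a vanishing alternating product. Controlling the positions at which $1$ is deleted, so as to exclude accidental cancellation between its neighbours, is the delicate part of this step.

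The two rigidity implications are where I expect the main difficulty. For (i) $\Rightarrow$ (iv) I would invoke Remark \ref{ko} and Corollary \ref{cc1}: with $C=1$ these furnish $\mathcal{U}\in D(C^*(G),M_{\F_\Lambda})$ of dec-norm $1$ with $\mathcal{U}(U_G(t))=\lambda_{\F_\Lambda}(g_t)$ for $t\in\Lambda$. After reducing (using the extremality $\|\mathcal{U}\|_{dec}=1$) to a unital completely positive map, each unitary $U_G(t)$ lands on the unitary $\lambda_{\F_\Lambda}(g_t)$ with $\mathcal{U}(U_G(t))^*\mathcal{U}(U_G(t))=1=\mathcal{U}(U_G(t)^*U_G(t))$, so every $U_G(t)$ lies in the multiplicative domain of $\mathcal{U}$; consequently $\mathcal{U}$ is a $*$-homomorphism on the generated algebra, and an alternating product equal to $1$ in $G$ would force the corresponding reduced word $g_{t_1}^{-1}g_{t_2}\cdots$ in $\F_\Lambda$ to equal $1$ in $M_{\F_\Lambda}$, which is absurd; this yields (iv). For (ii) $\Rightarrow$ (iv) I would argue by contraposition using the converse (rigidity) half of the Akemann--Ostrand/Leinert theory: a single vanishing alternating product introduces coincidences in the walk governing $\|\sum_s\lambda_G(s)\|$ and forces the reduced norm of an appropriate finite subset sum to drop strictly below $2\sqrt{|S|-1}$, contradicting (ii). Making these two extremal statements precise at the borderline constant $C=1$ (equivalently at the exact value $2\sqrt{|S|-1}$) is the step I expect to require the most care.
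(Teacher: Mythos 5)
Your overall architecture is close to the paper's: the paper likewise quotes Akemann--Ostrand \cite{AO} for (iii) $\Rightarrow$ (ii) and for (iii) $\Leftrightarrow$ (iv), and Lehner \cite{Leh} for the converse direction starting from (ii); and your (iii) $\Rightarrow$ (i) is the paper's argument in slightly different clothing (translation invariance, the normalization $z_1=1$ from Remark \ref{rd13}, and the completely isometric embedding $C^*(\langle S\rangle)\subset C^*(G)$). The genuine gap is in (i) $\Rightarrow$ (iv), at the words ``after reducing (using the extremality $\|\mathcal{U}\|_{dec}=1$) to a unital completely positive map''. No such reduction exists for the map you took from Remark \ref{ko}/Corollary \ref{cc1}: that $\mathcal{U}$ sends $U_G(t)$ to $\lambda_{\F_\Lambda}(g_t)$ for \emph{every} $t\in\Lambda$, so if $1\in\Lambda$ it sends the unit of $C^*(G)$ to the free generator $\lambda_{\F_\Lambda}(g_1)\neq 1$ and can never be made unital, while if $1\notin\Lambda$ nothing in the construction controls $\mathcal{U}(1)$ at all. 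The multiplicative-domain (Choi) argument genuinely requires a \emph{unital} completely positive map; having dec-norm one gives neither unitality nor positivity. Even passing to the u.c.p. $2\times 2$ dilation coming from the definition \eqref{d11} of the dec-norm does not help: one can show its diagonal corners $S_i$ satisfy $S_i(1)=1$ and $S_i(U_G(t))=0$, but membership of $U_G(t)$ in its multiplicative domain would then require $\mathcal{U}(U_G(t)^{-1})$ to be unitary, and $t^{-1}\notin\Lambda$ in general, so this is uncontrolled.

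The repair is exactly the normalization you used in (iii) $\Rightarrow$ (i) but omitted here. Both condition (iv) and the Sidon constant are invariant under left translation, so assume $1\in\Lambda$. By Remark \ref{rd13}, constant $C=1$ says precisely that the \emph{unital} correspondence $U_G(1)\mapsto 1$, $U_G(t)\mapsto U_{\F_{\Lambda\setminus\{1\}}}(g_t)$ for $t\in\Lambda\setminus\{1\}$ is completely contractive on $C_\Lambda$ (which now contains the unit); the extension produced by Remark \ref{ko} is then a unital complete contraction $C^*(G)\to M_{\F_{\Lambda\setminus\{1\}}}$, hence u.c.p., each $U_G(t)$ lies in its multiplicative domain, and your argument runs --- except that in the image the letters with $t_i=1$ are erased, so you need the same sign-pattern/deletion bookkeeping you sketched for (iii) $\Leftrightarrow$ (iv) to see that the surviving word is nonempty and reduced. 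This unitalization is exactly the point where the paper instead invokes \cite[Prop. 6]{Pki}: after translation, the unital complete contraction sending unitaries to unitaries is the restriction of a $*$-homomorphism $C^*(G)\to C^*(\F_{\Lambda\setminus\{1\}})$, which yields (iii) (hence (iv)) at once. So your route is salvageable and in effect re-proves the needed special case of that proposition, but as written the key rigidity step rests on an invalid justification.
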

  
\begin{proof} We start by
(iii) $\Rightarrow$ (i).  Assume (iii). Since  translation has no significant effect,
  it suffices to prove (i) for  $\Lambda=S \cup \{1\}$ with $S$ free.
  We may assume that $S$ generates $G$.
  Let $z: \Lambda \to U(A)$ such that $z(1)=1$. By the freeness of $S$
  there is a unitary representation $\pi: G \to A$ extending $z$.
By Remark \ref{rd13} $\Lambda$ is completely Sidon set with $C=1$.
Conversely, let us show (i) $\Rightarrow$ (iii).\\
Assume (i). Pick and fix an element $s\in \Lambda$.
We may assume after (left say) translation by $s^{-1}$ 
that $1\in \Lambda$.  Then the correspondence
$t\mapsto g_t$ ($t\not=s$) extends to a unital completely contractive map
from the span of $\Lambda$ in $C^*(G)$ to that
of $\{1\}\cup\{g_t\mid t\in \Lambda\setminus\{s\} \}$ in $C^*(\F_\Lambda)$.
By \cite[Prop. 6]{Pki} the latter mapping is the restriction of a unital $*$-homomorphism
from  $C^*(G)$ to $C^*(\F_\Lambda)$, which (by the maximality
of  $C^*(\F_\Lambda)$) must be a $*$-isomorphism.
Translating back by $s$ yields (iii). \\
 (iii) $\Leftrightarrow$ (iv) is due to 
Akemann-Ostrand \cite[Def. III.B and Th. III.D]{AO}, as well
as  (iii) $\Rightarrow$ (ii) and the converse is due to Lehner \cite{Leh}.   
\end{proof}

Since free sets (or their left or right translates) are the fundamental 
completely Sidon examples,  and the latter are stable by finite unions
it is natural to ask: Is any 
completely Sidon set a finite union
of  translates of free sets ?
In other words (see Proposition \ref{p02}): is every 
completely Sidon set with constant $C<\infty$ a finite union of
sets with   $C=1$ ?
Of course this would imply  that any group $G$ that contains
an infinite completely Sidon set  contains a copy of $\F_\infty$
as a subgroup, but we do not even know
  whether this is true, although non-amenability is known (see
  Remark \ref{r22}).
\begin{rem} In \cite{P9}  we asked whether an $L$-set (see the definition below) is a finite union of left translates of free sets, but Fendler gave a simple counterexample
in Coxeter groups  in \cite{Fen}.
  \end{rem}

\subsection{\bf $L$-sets}

In \cite{P9} (following \cite{HP2})
we study a class of subsets of discrete groups that we call $L$-sets.
By definition, $L$-sets are the sets satisfying \eqref{e111} below.
These sets are the same as those called strong 2-Leinert sets in \cite{Boz3}.
$L$-sets seem to be somehow the \emph{reduced} $C^*$-algebraic analogue of our  
completely Sidon sets.
Indeed, $\Lambda\subset G$ is an $L$-set
iff the linear map taking 
$\lambda_{\F_\Lambda}(g_t)$ to $t\in \Lambda$
extends to a complete isomorphism $v$
   from the span
of $\tilde \Lambda$ in $C^*_\lambda(\F_\Lambda)$
to that of $\Lambda$ in $C^*_\lambda(G)$. 
If \eqref{e111} holds we have
$\|v\|_{cb}\le C'$
and  $\|v^{-1}\|_{cb}\le 1$ always holds.
 The connection
between completely Sidon sets
and $L$-sets is unclear. However our  Proposition \ref{p2} below
  suggests that completely Sidon sets
are probably $L$-sets. 

\begin{pro}\label{p2} Assume that $C^*_\lambda(G)$ is an exact $C^*$-algebra
($G$ is then called an ``exact group").
Let $\Lambda\subset G$ be a completely Sidon set.
There is a constant $C'$ such that
for any $k$ and any finitely supported
function $a: \Lambda \to M_k$ we have
\begin{equation}\label{e111}\|\sum\nl_{t\in \Lambda} a(t) \otimes \lambda_G(t)\|
\le C' \max\{ \|\sum a(t)^*a(t)\|^{1/2}, \|\sum a(t)a(t)^*\|^{1/2}    \}.\end{equation}
In other words
$\Lambda$ is an $L$-set in the sense of \cite{P9} .
\end{pro}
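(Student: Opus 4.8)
\emph{The plan is} to reduce \eqref{e111} to a single completely bounded comparison with the free generators, and then to use the completely Sidon hypothesis together with exactness to establish that comparison. First I would record the two ``trivial'' halves. By \eqref{cc6} applied to $G$ one has $\max\{\|\sum a(t)^*a(t)\|^{1/2},\|\sum a(t)a(t)^*\|^{1/2}\}\le \|\sum a(t)\otimes\lambda_G(t)\|$, while Haagerup's inequality for words of length one (the estimate underlying \eqref{cc7}, with roots in \cite{Hainv}) gives $\|\sum a(t)\otimes\lambda_{\F_\Lambda}(g_t)\|\le 2\max\{\cdots\}$. Hence, up to the factor $2$, proving \eqref{e111} is the same as proving the completely bounded estimate
$$\Big\|\sum\nl_{t} a(t)\otimes\lambda_G(t)\Big\|\ \le\ C''\,\Big\|\sum\nl_{t} a(t)\otimes\lambda_{\F_\Lambda}(g_t)\Big\|,$$
that is, the complete boundedness of the map $v\colon \lambda_{\F_\Lambda}(g_t)\mapsto \lambda_G(t)$ between the reduced spans. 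The reverse map $v^{-1}$ is automatically completely contractive (the free generators realize the \emph{smallest} reduced norm), so what remains is exactly an \emph{upper} bound for the reduced norm.

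Next I would bring in the hypotheses. Completely Sidon furnishes, by Corollary \ref{cc1} (see also Remark \ref{ko}), a map $\cl U\in D(C^*(G),M_{\F_\Lambda})$ with $\cl U(U_G(t))=\lambda_{\F_\Lambda}(g_t)$ and $\|\cl U\|_{dec}\le C$. Using the Fell absorption principle as in Theorem \ref{ppp}, the two sides above admit diagonal reformulations: since $\lambda_G\otimes U_G\simeq \lambda_G\otimes 1$, the reduced norm $\|\sum a(t)\otimes\lambda_G(t)\|$ equals the norm of the diagonal $\sum a(t)\otimes\lambda_G(t)\otimes U_G(t)$, and since $\lambda_{\F_\Lambda}\otimes(\lambda_G\circ q)\simeq \lambda_{\F_\Lambda}\otimes 1$ (where $q\colon \F_\Lambda\to G$, $g_t\mapsto t$), the free norm equals the norm of $\sum a(t)\otimes\lambda_{\F_\Lambda}(g_t)\otimes\lambda_G(t)$ in $C^*_\lambda(\F_\Lambda)\otimes_{\min}C^*_\lambda(G)$. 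Thus the whole problem is to pass \emph{boundedly} and \emph{at the reduced level} from the free leg $\lambda_{\F_\Lambda}(g_t)$ to $\lambda_G(t)$, i.e.\ to extract the $G$-leg of the graph diagonal. Here exactness of $C^*_\lambda(G)$ enters, in a role entirely parallel to its use in Theorem \ref{tt1}: I would realize the reduced norm through the finite-dimensional matrix models provided by the exact (nuclear) embedding of $C^*_\lambda(G)$ and transport the estimate through the Haagerup--Thorbj{\o}rnsen random unitaries $u^{(N)}_t$, which converge in moments and strongly to the free generators (\eqref{cc3} and \eqref{cc5}, cf.\ \cite{HT3,CM}). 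In the finite-dimensional models the maximal and minimal tensor norms coincide, and the completely Sidon bound $\|\cl U\|_{dec}\le C$ is applied to the \emph{free-like} unitaries $u^{(N)}_t$ rather than to arbitrary ones; writing the intervening contractions as averages of unitaries (Remark \ref{dye}) lets one keep the unitary hypothesis throughout.

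\emph{The hard part will be} precisely this transfer from the full to the reduced setting. The completely Sidon data $\cl U$ live on $C^*(G)$, whereas \eqref{e111} is a statement about $C^*_\lambda(G)$; equivalently, the only ``obvious'' candidate for $v$ is the quotient morphism $q$, and at the reduced level this is unbounded because $\ker q$ is non-amenable, so $\lambda_G\circ q\not\prec\lambda_{\F_\Lambda}$. For a free set the full norm is genuinely of a much larger order than the reduced one, which is why one cannot simply feed arbitrary unitaries into the completely Sidon inequality (that would only reproduce the full norm $\|\sum a(t)\otimes U_G(t)\|$, and in fact the crude row/column decomposition of the dual $R+C$ yields only the useless bound $O(\sqrt{|\alpha|})$). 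Exactness is exactly what dissolves this weak-containment obstruction, by forcing the comparison to take place inside asymptotically free finite-dimensional models where it is invisible. Tracking the constant through the two uses of $\cl U$ (as in the $\sharp$-product estimate of Corollary \ref{cc1}) should give $C''=O(C^2)$, whence $\Lambda$ is an $L$-set.
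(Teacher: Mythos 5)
Your reduction of \eqref{e111} to the completely bounded comparison $\|\sum a(t)\otimes\lambda_G(t)\|\le C''\,\|\sum a(t)\otimes\lambda_{\F_\Lambda}(g_t)\|$ is correct, and you have assembled the right list of ingredients: the Sidon hypothesis applied to special unitaries, Fell absorption, strong convergence of random matrix models (where exactness of $C^*_\lambda(G)$ enters via \cite{CM}), and the free Haagerup inequality to finish. But the proposal stops exactly where the proof must start: you name the full-to-reduced transfer as ``the hard part'' and then assert that exactness ``dissolves'' the obstruction, without giving any mechanism. Worse, the one concrete use of the hypothesis you do propose --- applying the completely Sidon bound to the matrix-model unitaries $u^{(N)}_t$ --- yields inequalities of the form $\|\sum b_t\otimes u^{(N)}_t\|\le C\|\sum b_t\otimes U_G(t)\|$, i.e.\ upper bounds \emph{by} the full norm. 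As you yourself observe, such inequalities can only reproduce the full norm; they can never place the reduced norm of $G$ \emph{below} a matrix-model or free-group norm, which is what \eqref{e111} requires. So as written the argument cannot close, and the expected constant $O(C^2)$ coming from ``two uses of $\cl U$'' is also not how the estimate arises.

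The missing idea is a conjugate-pairing (common fixed vector) trick, and it is the heart of the paper's proof. Fix the matrix size and let $(U_t)_{t\in\Lambda}$ be \emph{arbitrary} unitaries in $U(k)$. Apply Proposition \ref{pfz}(ii) to the function $z(t)=\overline{U_t}$, but with coefficients $b_t=a(t)\otimes\lambda_G(t)\otimes U_t$ that already carry both the reduced leg and the matrix leg:
$$\Big\|\sum_t \big[a(t)\otimes\lambda_G(t)\otimes U_t\big]\otimes\overline{U_t}\Big\|\ \le\ C\,\Big\|\sum_t \big[a(t)\otimes\lambda_G(t)\otimes U_t\big]\otimes U_G(t)\Big\|.$$
On the right, Fell absorption ($\lambda_G\otimes U_G\simeq\lambda_G\otimes 1$) erases the full leg, leaving $C\|\sum a(t)\otimes\lambda_G(t)\otimes U_t\|$; on the left, every operator $U_t\otimes\overline{U_t}$ fixes the vector $\sum_i e_i\otimes e_i$, so compressing the last two legs to that vector gives the lower bound $\|\sum a(t)\otimes\lambda_G(t)\|$. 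This produces the deterministic inequality
$$\Big\|\sum_t a(t)\otimes\lambda_G(t)\Big\|\ \le\ C\,\Big\|\sum_t a(t)\otimes\lambda_G(t)\otimes U_t\Big\|,$$
valid for \emph{every} choice of unitaries $(U_t)$; this is the step your sketch has no substitute for. Only now do randomness and exactness enter: take $(U_t)$ i.i.d.\ Haar in $U(k)$ and let $k\to\infty$. Since the coefficients $a(t)\otimes\lambda_G(t)$ lie in the exact $C^*$-algebra $M_m(C^*_\lambda(G))$, strong asymptotic freeness (\cite{CM}; the upper bound of \cite[Th. B]{HT2} suffices) gives that the right-hand side converges to $C\|\sum a(t)\otimes\lambda_G(t)\otimes\lambda_{\F_\Lambda}(g_t)\|_{\min}$, which by Fell absorption for $\F_\Lambda$ (with the representation $\lambda_G\circ q$, exactly as in your diagonal reformulation) equals $C\|\sum a(t)\otimes\lambda_{\F_\Lambda}(g_t)\|$, and then the free-generator estimate \eqref{cc7} finishes the proof with $C'=2C$ --- a single use of the Sidon property, not two.
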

 \begin{proof} Fix $k$. Let $(U_t)_{t\in \Lambda}$ be an i.i.d. family
 of random matrices uniformly distributed in the unitary group $U(k)$.
 Let $z(t)=\ovl{U_t}$.
  By  (ii) in Proposition \ref{pfz} we have $\|u_z\|_{cb}\le C$ and hence
 \begin{equation}\label{e113}\|\sum\nl_{t\in \Lambda}  [a(t) \otimes \lambda_G(t)\otimes  {U_t}  ] \otimes \ovl{U_t}\|\le
 C\|\sum\nl_{t\in \Lambda}   
 [a(t) \otimes \lambda_G(t)\otimes  {U_t}  ]  \otimes  U_G(t)\|.\end{equation}
  Since $U_G  \otimes \lambda_G$
is equivalent to $\lambda_G$ (by Fell's absorption principle, see e.g. \cite[p. 149]{P4})
and we may permute the factors
$$\|\sum\nl_{t\in \Lambda}   
 [a(t) \otimes \lambda_G(t)\otimes  {U_t}  ]  \otimes  U_G(t)\|
 =
 \|\sum\nl_{t\in \Lambda}   
 a(t) \otimes   \lambda_G(t) \otimes  {U_t}\|,$$
 and   since the operators $U_t \otimes \ovl{U_t}$
 have a common eigenvector
 $$\|\sum\nl_{t\in \Lambda}   a(t)\otimes   \lambda_G(t)\|\le
 \|\sum\nl_{t\in \Lambda}  [a(t) \otimes \lambda_G(t)\otimes  {U_t}  ] \otimes \ovl{U_t}\|.$$
 Therefore \eqref{e113} implies
 $$\|\sum\nl_{t\in \Lambda}   a(t)\otimes   \lambda_G(t)\|\le
 C\|\sum\nl_{t\in \Lambda}   
 a(t) \otimes   \lambda_G(t) \otimes  {U_t}\|.$$
We now recall that the matrices ${U_t}$ are random $k \times k$ unitaries and
  we let $k\to \infty$.
  By \cite{CM} (actually   \cite[Th. B]{HT2} suffices for our needs) the announced inequality follows with $C' = 2C$.
 \end{proof}
 
 \begin{rem} In Proposition \ref{p2} it clearly suffices to assume that 
$C^*_\lambda(G)$  is ``completely tight" or ``subexponential" in the sense
of \cite{Pirm}.
 \end{rem}
  \begin{rem} We refer to \cite[\S 9.7]{P4} for all the terms used here.
   By Remark \ref{har} below applied with $p=1$,
  if $\Lambda\subset G$ (assumed infinite for simplicity) is completely Sidon, then the span
  of $\Lambda$ in  $L_1(\tau_G)={M_G}_*$ is completely isomorphic
  to the operator space $R+C$. But we see no reason why
 it should be  completely complemented in  $L_1(\tau_G)$,
 so we do not see how to deduce from this that the span  of $\Lambda$ in $M_G$
 or in $C^*_\lambda(G)$ is completely isomorphic
  to the operator space $R\cap C=(R+C)^*$.
 \end{rem}
  
 Note that the question whether $C^*_\lambda(G)$ is an exact $C^*$-algebra
 for all groups $G$
 remained open for a long time, until Ozawa \cite{Oz}
 proved that a group constructed by Gromov in \cite{Gro}
  (the so-called ``Gromov monster") 
 is a counterexample. See also \cite{ArzD} and also \cite{Osa1,Osa2}
 for more recent examples.
 This shows that the assumption that $G$ is exact in
  Proposition \ref{p2} is a serious restriction,
  although it holds in many examples.
 
 In the converse direction
 we do not have any example
at hand of an $L$-set that is not completely Sidon.

\subsection{\bf $\Lambda(p)$-sets}
In \cite{Boz1b,Boz1c} Bo\.zejko considered the analogue of Rudin's
$\Lambda(p)$-sets in a non-abelian discrete group $G$. He proved 
that any  sequence in $G$ contains a subsequence
forming a $\Lambda(p)$-set with $\Lambda(p)$-constant
growing like $\sqrt p$ (we  call such sets ``subgaussian" in \cite{Pi3}).
In this direction,
a natural question arises: which sequences in $G$ contain
a completely Sidon subsequence ?
similarly, which contain a   subsequence forming an $L$-set ?
Obviously this is not true for any infinite sequence. It 
seems interesting to understand the underlying combinatorial 
(or operator theoretic) property
that allows the extraction.
In this context, we recall Rosenthal's famous dichotomy \cite{Ros}
for a sequence in a Banach space: it contains either 
a weak Cauchy subsequence or a $\ell_1$-sequence (i.e.
the analogue of a Sidon sequence). Is there an operator space analogue
of Rosenthal's theorem ?

\subsection{\bf $\Lambda(p)_{cb}$-sets}

$L$-sets are also $\Lambda(p)_{cb}$-sets
in the sense of Harcharras \cite{Har} for any $2<p<\infty$. In fact    $L$-sets
are just  $\Lambda(p)_{cb}$-sets  
with uniformly bounded $\Lambda(p)_{cb}$-constant when $p\to \infty$.
 We refer to \cite{Har}  for more information on these operator
space analogues of Rudin's $\Lambda(p)$-sets.

\begin{rem}\label{har} If $\Lambda\subset G$ is completely Sidon, then a fortiori
  it is ``weak Sidon" in the sense of \cite{Pic}.
  This means that any 
  bounded scalar valued   function
  on $\Lambda$ is the restriction
  of a  multiplier in $B(G)$. Since the latter are 
    c.b.  multipliers
  on $L_p(\tau_G)$ simultaneously for all $1\le p< \infty$
(by Proposition \ref{pd16} and complex interpolation)
we can use the Lust-Piquard-Khintchine inequalities (see \cite[p. 193]{P4})
to show that for any $1\le p<\infty$
the span of $\Lambda$ in $L_p(\tau_G)$
is isomorphic to that of $\tilde\Lambda$ in $L_p(\tau_{\F_\Lambda})$.
Therefore, $\Lambda$ is $\Lambda(p)_{cb}$ for any $2<p<\infty$ and
  the corresponding constant  is $O(\sqrt p)$ when $p\to \infty$.
 Such sets could be called
  ``completely  subgaussian". 
  Whether conversely the $\Lambda(p)_{cb}$-constant being $O(\sqrt p)$
  implies weak Sidon
     probably fails
  but we do not have any counterexample. It is natural to ask whether
   this ``completely  subgaussian" property implies
  that the set defines an unconditional basic sequence
  in the \emph{reduced} $C^*$-algebra of $G$.
  In this form this is correct for commutative groups 
  by our result from 1978 (see \cite{Pi3}), but
  what about amenable groups ?
  
 \n In  \cite{BGM}
 it is proved   that the generators
  in any Coxeter group satisfy the    weak Sidon property
  and  the preceding remark is explicitly applied to that case.
 \end{rem}

\subsection{\bf Exactness}\label{ex}

It is a long standing problem raised by Kirchberg whether the exactness of
the full $C^*$-algebra $C^*(G)$ of a   discrete group $G$
implies the amenability of $G$.
We feel that the preceding results may 
shed some light on this.

Let $\Lambda \subset A$ be a subset of a $C^*$-algebra $A$.
Let ${\F_\Lambda}$ be the free group
with generators $(g_t)$ indexed by ${\Lambda}$.
Following \cite{Pi8} we say that   $\Lambda \subset A$
is completely Sidon with constant
$C$ if the linear map
taking $t\in \Lambda$ to $U_{\F_\Lambda}(g_t)$
is c.b. with c.b-norm $\le C$.\\
For any $n\ge 1$, let $\Lambda_n$ be 
linearly independent \emph{finite} sets in the unit ball of   $A$
with  $|\Lambda_n|\to \infty$.
Let $C(\Lambda_n)$ be the completely Sidon constant.
By  \cite[Th. 21.5, p. 336]{P4}
if $C(\Lambda_n)=o(\sqrt{ |\Lambda_n|})$
then $A$ cannot be exact.
In particular, if this holds for  $A= C^*(G)$
then $G$ is not amenable. A fortiori,
if $A= C^*(G)$ contains an infinite completely Sidon set
then $G$ is not amenable.

Thus one approach to the preceding Kirchberg problem
could be to show conversely that if $G$ is non-amenable
then there is a sequence $(\Lambda_n)$ of such  sets   in $A= C^*(G)$
or even in $G$.

The analogous fact for the reduced $C^*$-algebra
was proved by Andreas Thom \cite{Th}.

\subsection{\bf Interpolation sets}

Sidon sets are examples
of ``interpolation sets". Given an abstract set $G$
given with a space $X\subset \ell_\infty(G)$ of functions on $G$,   a subset $\Lambda \subset G$
is called an interpolation set for $X$ if any bounded function on $\Lambda$
is the restriction of a function in $X$.

It is known (see \cite{P9}) that  $\Lambda \subset G$  is   an $L$-set iff
any  (real or complex) function bounded
on $\Lambda$  and vanishing outside  it  is a c.b. (i.e. ``Herz-Schur")
multiplier on the von Neumann algebra of $G$. 
In other words
$\Lambda$ is an interpolation set for the class of such multipliers,
\emph{with an additional property:} that the indicator function of $\Lambda$
is also a  a c.b. (Herz-Schur)
multiplier.

  In \cite{Pic} Picardello introduces
  the term ``weak Sidon set"  
for a subset $\Lambda \subset G$
such that any bounded function on $\Lambda$
  is the restriction of one in $B(G)=C^*(G)^*$. In other words, $\Lambda$ is an interpolation set for $B(G)$.
  By Hahn-Banach this is 
  the same as saying that the closed span of $\Lambda$ in the full 
  $C^*$-algebra $C^*(G)$ is isomorphic as a Banach space to $\ell_1(\Lambda)$
  by the natural correspondence.\\
 In \cite{Pic}  the term Sidon (resp. strong Sidon) is then
 (unfortunately in view of our present work) reserved  
for the interpolation sets for $B_\lambda(G)=C_\lambda^*(G)^*$
(resp. for the sets such that any 
function in $c_0(\Lambda)$ extends to one in $A(G)$).
Simeng Wang observed recently in \cite{Wang} that Sidon and strong Sidon in Picardello's sense
are equivalent.

\begin{rem}[Operator valued interpolation] 
A subset $\Lambda\subset G$ is completely Sidon 
iff it is an interpolation set for operator valued functions
more precisely iff any bounded $B(H)$-valued function on $\Lambda$
is the restriction of one in $CB(G, B(H))$.
 Indeed, this is Proposition \ref{pfz}.
 Moreover, if this holds then by Theorem \ref{tfz} for any  unital $C^*$-algebra $A$
 any bounded $A$-valued function on $\Lambda$
is the restriction of one in $D(G,A)$.
\end{rem}
\begin{rem}[Final remark] In \cite{Pi8}  we prove  a version of the union theorem
for subsets of a general $C^*$-algebra $A$.
We can recover the group case when $A=C^*(G)$.
\end{rem}
 
   \medskip
   
   \medskip
   \n\textit{Acknowledgement.}  Thanks are due  to  Marek Bo\.zejko, 
Simeng Wang 
and Mateusz Wasilewski   for useful communications.
Lastly I am  grateful to the referee for a very careful reading.  \medskip

  \end{document}